\renewcommand\section{\@startsection{section}{1}{\z@}%
                       {-3\p@ \@plus -4\p@ \@minus -4\p@}%
                       {3\p@ \@plus 4\p@ \@minus 4\p@}%
                      {\normalfont\normalsize\centering\scshape}}
\author{Hsuan-Yi Liao \textsuperscript{1}}
\author{Zachary McGuirk \textsuperscript{2}}
\author{Dang Khoa Nguyen \textsuperscript{3}}
\author{Byungdo Park \textsuperscript{4}}
\address{\textsuperscript{1} \small Hsuan-Yi Liao,
	Department of Mathematics, National Tsing Hua University, Hsinchu 300, Taiwan} 
\email{\href{mailto:hyliao@math.nthu.edu}{hyliao@math.nthu.edu}}
\address{\textsuperscript{2} \small Zachary McGuirk, Einstein Institute of Mathematics, Edmond J. Safra Campus, The Hebrew University of Jerusalem, Jerusalem 91904, Israel} 
\curraddr{College of Arts and Sciences, New York Institute of Technology, 16 West 61st St, New York, NY 10023, USA}
\email{\href{mailto:zmcguirk@nyit.edu}{zmcguirk@nyit.edu}}
\address{\textsuperscript{3} \small Dang Khoa Nguyen,
	Department of Mathematics, National Tsing Hua University, Hsinchu 300, Taiwan} 
\email{\href{mailto:nguyendangkhoa211@gmail.com}{nguyendangkhoa211@gmail.com}}
\address{\textsuperscript{4} \small Byungdo Park,
	Department of Mathematics Education, Chungbuk National University, Cheongju 28644, Republic of Korea} 
\email{\href{mailto:byungdo@cbnu.ac.kr}{byungdo@cbnu.ac.kr}}
\thanks{This work was partially supported by the National Research Foundation of Korea (NRF) grant funded by the Korea government (MSIT) (No. 2020R1G1A1A01008746), Chungbuk National University NUDP program (2024), and the MoST/NSTC Grants 110-2115-M-007-001-MY2 and 112-2115-M-007-016-MY3.}
\subjclass[2020]{Primary 55U35; Secondary 05C20} 
\keywords{Graphs, Digraphs, Combinatorics, Graph Homotopy, Yoneda Lemma, Brown functor, Representable functors, Directed graph path cohomology}
\newcommand{\cC}{\mathcal{C}} 
\newcommand{\cD}{\mathcal{D}}
\DeclareMathOperator{\image}{im}
\newcommand{\RNum}[1]{\uppercase\expandafter{\romannumeral #1\relax}}
\def\colour{\colour}
\def\colour{\color}
\theoremstyle{definition}    \newtheorem{dfn}{Definition}[section]
\theoremstyle{definition}    \newtheorem{rmk}[dfn]{Remark}
\theoremstyle{definition}    
\theoremstyle{definition}    \newtheorem{prp}[dfn]{Proposition}
\theoremstyle{definition}    
\theoremstyle{definition}    \newtheorem{nta}[dfn]{Notation}
\theoremstyle{definition}    \newtheorem{thm}[dfn]{Theorem}
\theoremstyle{definition}    
\theoremstyle{definition}    \newtheorem{lem}[dfn]{Lemma}
\theoremstyle{definition}    
\theoremstyle{definition}    \newtheorem{exa}[dfn]{Example}         
\theoremstyle{definition}    
\theoremstyle{definition}    
\theoremstyle{definition}    
\newcommand{\e}{\mathrm{e}}
\newcommand{\id}{{\rm id}}
\renewcommand{\epsilon}{\varepsilon}
\renewcommand{\phi}{\varphi}
\newcommand{\embed}{\hookrightarrow}		
\DeclareFontFamily{OT1}{restrictfont}{}
\DeclareFontShape{OT1}{restrictfont}{m}{n}{<-> fmvr8x}{}
\newcommand{\Z}{{\mathbb Z}}
\newcommand{\cA}{\mathcal A}
\newcommand{\cR}{\mathcal R}
\newcommand{\f}{\mathfrak}
\newcommand{\al}{\alpha}
\newcommand{\be}{\beta}
\newcommand{\ga}{\gamma}     \newcommand{\Ga}{{\Gamma}}
\newcommand{\de}{\delta}
    \newcommand{\La}{{\Lambda}}
\newcommand{\te}{\theta}    \newcommand{\Te}{{\Theta}}
  \newcommand{\Hom}{\textrm{Hom}}
\newcommand{\hD}{\text{Ho}\mathcal{D}}
\newcommand{\whD}{\text{wHo}\mathcal{D}}
\newcommand{\srl}{\stackrel}
 \newcommand{\ra}{\rightarrow}
\newcommand{\isom}{\cong}     
\newcommand{\bmat}{\left(\begin{array}}  \newcommand{\emat}{\end{array}\right)}
\newcommand{\barr}{\begin{array}}  \newcommand{\earr}{\end{array}}
\newcommand{\bcd}{\begin{CD}}  \newcommand{\ecd}{\end{CD}}
\newcommand{\beq}{\begin{equation}\begin{aligned}}  \newcommand{\eeq}{\end{aligned}\end{equation}}
\newcommand{\beqs}{\begin{equation*}\begin{aligned}}  \newcommand{\eeqs}{\end{aligned}\end{equation*}}
\newcommand{\vG}{\vec{G}} 
\newcommand{\vH}{\vec{H}}
\newcommand{\fsub}{\underset{\text{finite}}{\subseteq}}
\newcommand{\BF}{\mathrm{H}}
\newcommand{\EBF}{\widehat{\BF}}
\newcommand{\Nat}{\mathrm{Nat}}
\begin{document}
	
	\title{Brown functors of directed graphs}
	\date{22 June 2025}
	
	\maketitle

	\parindent0cm
	\setlength{\parskip}{\baselineskip}

\begin{abstract}
    We prove that any digraph Brown functor --- i.e.\ a contravariant functor from the homotopy category of finite directed graphs to the category of abelian groups, satisfying the triviality axiom, the additivity axiom, and the Mayer-Vietoris axiom --- is representable. Furthermore, we show that the first path cohomology functor is a digraph Brown functor. 
\end{abstract}

\section{Introduction}
The homotopy theory of directed graphs is a discrete analogue of homotopy theory in algebraic topology. In topology, a homotopy between two continuous maps is defined as a family of continuous maps parametrized by the closed interval $[0,1]$, providing a continuous interpolation between them. Its discrete counterpart, which we study, uses a directed line graph to track discrete changes in directed graph maps.

Efforts to develop a homotopy theory for graphs date back to the 1970s and 1980s, with early contributions by Gianella~\cite{G76} and Malle~\cite{M83}. However, the modern formulation of graph homotopy theory gained momentum with a 2001 paper by Chen, Yau, and Yeh~\cite{BYY01}, culminating in the 2014 work of Grigor'yan, Lin, Muranov, and Yau~\cite{GLMY14}. More recently, interest in this notion of homotopy for directed graphs has grown, particularly following a result by Grigor'yan, Jimenez, Muranov, and Yau in~\cite{GJMY18}, which demonstrated that the path homology theory of directed graphs satisfies a discrete version of the Eilenberg-Steenrod axioms.

In this paper, we investigate digraph Brown functors, which are contravariant functors from the homotopy category of finite directed graphs to the category of abelian groups, satisfying the triviality axiom, the additivity axiom, and the Mayer-Vietoris axiom (Definition~\ref{DFN.brown.functor}). 
Our main result is that any digraph Brown functor is representable. That is, if $\BF$ is a digraph Brown functor, then there exist a classifying directed graph $\vec{Y}$ (which is not necessarily finite) and a natural isomorphism between the functor $[-,\vec{Y}]$, which assigns homotopy classes of maps, and the digraph Brown functor $\BF(-)$. See Theorem~\ref{THM.main.theorem}.

The Brown representability theorem is a classical theorem in algebraic topology first proved by Edgar H. Brown~\cite{B62}. It states that a $\textbf{Set}$-valued functor on the homotopy category of based CW complexes, satisfying the wedge axiom and the Mayer-Vietoris axiom, is representable. Brown went further in~\cite{B65} by replacing the homotopy category of based CW complexes with an arbitrary category satisfying a list of proposed axioms, and this result has been further generalized in triangulated categories by Neeman~\cite{Ne}, closed model categories in Jardine~\cite[Theorem 19]{Ja}, and homotopy categories of $\infty$-categories in Lurie~\cite[Section 1.4.1]{Lu}. The gist of these generalizations is that Brown representability is more of a category-theoretic feature than a topological feature. However, the idea behind the classical theorem of J. H. C. Whitehead that every CW complex is formed by attaching spheres is essential, whereas an analogue of attaching spheres in the homotopy category of directed graphs is not well-understood yet. A more fundamental issue in here is that the homotopy extension property fails in the category of directed graphs (see Remark~\ref{EXA.HEP.does.not.hold}).
In particular, this is why a straightforward verification of Brown's axioms~\cite{B65} for the category of directed graphs is insufficient to establish this representability result for directed graphs.

Our approach to the Brown representability theorem for directed graphs is mainly inspired by Adams~\cite{Ad}. However, adapting Adams' method to directed graphs requires developing new combinatorial tools that serve as discrete analogues of Adams' constructions while respecting inherent limitations of digraph homotopy theory. Specifically, we construct a classifying directed graph $\vec{Y}$ for a Brown functor $\BF$ by attaching all possible mapping tubes (see Definition~\ref{DFN.mapping.tube}) to an infinite directed graph \eqref{eq:Y1}. See Lemma~\ref{induct} and Lemma~\ref{LEM.representability.on.finite.digraph}. In this process, we extend $\BF$ to a functor on the category of arbitrary (including infinite) directed graphs while ensuring that it still satisfies a version of the Mayer-Vietoris axiom. This extension is achieved by considering modified mapping cones (Definition~\ref{DFN.digraph.cofiber}). The key properties of a modified mapping cone $\vec{C}(f)$ for a map $f:\vG \to \vH$ of directed graphs are as follows:
\begin{itemize}
    \item[(i)] There is a natural embedding $\vG \hookrightarrow \vec{C}(f)$ that is homotopic to both a constant map and the map $\vG \to \vec{C}(f)$ induced by $f$ (see Example~\ref{ex:CofiberHpt}).
    \item[(ii)] There is an associated exact sequence $\BF(\vec{C}(f)) \to \BF(\vH) \to \BF(\vG)$ for a Brown functor $\BF$ (see Lemma~\ref{LEM.adams.3.1}).
    \item[(iii)] If $f$ is the map $\vG \amalg \vH \to \vG \cup \vH$, then $\vec{C}(f)$ is homotopic to the subdigraph generated by the intersection $\vG \cap \vH$ (see Figure~\ref{fig:suspension}).
\end{itemize}
These properties guarantee the existence of such an extended functor, as demonstrated in Proposition~\ref{PRP.Adams.3.5}, where we employ an alternative construction $S$ to compensate for the absence of natural suspension operations for this purpose.

The Brown representability theorem in topology requires the domain category of the functors to be the category of connected CW complexes with a base point. When the domain category is either the category of not-necessarily-connected CW complexes or the category of unbased CW complexes, well-known counterexamples exist; see, for example, Heller~\cite{He} and Freyd and Heller~\cite{FH}. The category of directed graphs is not as rich as the category of topological spaces, and the Freyd–Heller counterexample does not occur.

As a concrete example of digraph Brown functors, we consider path cohomology of directed graphs \cite{GLMY15,GMY16,GMY17}. Explicitly, we prove that the first path cohomology is a digraph Brown functor.

This notion of path (co)homology is particularly interesting because counting the essential types of cycles in a directed graph is relevant in various contexts where directed graphs model physical and real-world phenomena, such as quivers \cites{Ci, Ho, GS}, neural networks \cite{CGHY}, electrical circuits \cite{DSB}, and gauge theory \cite{DM}. Additionally, it has applications in homotopy theory \cite{CDKOSW}, persistent homology \cite{CHT}, and Hochschild cohomology of algebras~\cite{GMY16}.  Recently, another version of directed graph homology theory that satisfies all invariance, Kunneth, excision, and Mayer-Vietoris theorems was considered \cite{HR}. 
Furthermore, since Hochschild cohomology is equipped a structure of Gerstenhaber algebra \cite{G63}, with the connection between path cohomology and Hochschild cohomology \cite{GMY16}, it would be interesting to construct a transfered homotopy Gerstenhaber algebra structure on a digraph, and explore a digraph version of relevant theorems \cite{LS23,LSX18}.

The paper is organized as follows. Section~\ref{SEC.section.2} provides background on digraph homotopy theory and the relevant constructions. It serves a twofold purpose: establishing notation and conventions, and presenting constructions that underpin our arguments and main results in later sections.  
Section~\ref{SEC.section.3} defines a Brown functor, introduces its extended version, and discusses their properties. Section~\ref{SEC.section.4} constructs a classifying directed graph of a Brown functor. In Section~\ref{sec:PathCoh}, we consider the path cohomology of directed graphs and prove that the first path cohomology is a Brown functor.

\textbf{Acknowledgements.} We thank J\'ozef Dodziuk and Martin Bendersky for their interest on this work. Additionally, we thank Matthew Burfitt, Seokbong Seol, Gr\'{e}goire Sergeant-Perthuis, Jyh-Haur Teh, Kuang-Ru Wu, Chi Ho Yuen, Ping Xu and Tom Zaslavsky for helpful comments. Some of the research which resulted in this paper was carried out in Seoul at Korea Institute for Advanced Study (KIAS) and at the Hebrew University of Jerusalem. We thank these institutions for their support and hospitality.

\section{Homotopy theory for directed graphs}\label{SEC.section.2}
In this section, we shall give a brief review of directed graph homotopy theory as well as relevant constructions. A good reference on digraph homotopy theory is Grigor'yan, Lin, Muranov, and Yau~\cite{GLMY14} which has a broader account. Several constructions we give in this section are the technical core of this paper. See ~\ref{DFN.digraph.cofiber} and~\ref{DFN.mapping.tube}.

    \subsection{The category of directed graphs}
   A \textbf{directed graph} (or \textbf{digraph} for short) $\vec{G}$ is a pair $(V,E)$ consisting of a set $V$ specifying labeled points called \emph{vertices} and another set $E$ of ordered pairs of distinct vertices in $V$ called \emph{edges}. Having an edge $(x,y)\in E$ means that there is a directed arrow from $x$ to $y$ and graphically one draws $ x\rightarrow y$. Note that from the definition above, loop-edges are excluded from consideration and since $E$ is a set, $(x,y)$ occurs at most once. When multiple digraphs are involved in the context, we often denote the set of vertices of $\vG$ by $V_{\vG}$ and the set of edges by $E_{\vG}$.

A \textbf{point} $\ast$ is a digraph consisting of only one vertex and no edges, and an \textbf{$n$-step line digraph}, $n>0$, is a sequence of vertices, $0$, $1$, $2$,$\ldots$, $n$, such that either $(i-1,i)$ or $(i,i-1)$, for $1\leq i\leq n$, is an edge (but not both) and there are no other edges. Note that an $n$-step line digraph is also called \emph{path digraph} or a \emph{linear digraph}. Such a directed graph forms a line with $n$ arbitrarily oriented edges between each of the $n+1$ vertices. When $n=1$, there are two possible line digraphs, $I^+:= 0\rightarrow 1$ and $I^-:= 0\leftarrow 1$. We will denote an arbitrary $n$-step line digraph as $I_n$ for short and let $\mathcal{I}_n$ represent the set of all possible $n$-step line digraphs. The set of all line digraphs of any length  will be denoted $\mathcal{I}=\bigcup_n\mathcal{I}_n$ and we will refer to an arbitrary element of $\mathcal{I}$ as a line digraph $I$, dropping the reference to the number of steps.

A \textbf{digraph map}, $f\colon\vec{G}\to\vec{H}$, is a function from the vertex set of $\vec{G}$ to the vertex set of $\vec{H}$ such that whenever $(x,y)$ is an edge in $\vec{G}$ either $f(x)=f(y)$ in $\vec{H}$ or $ {f(x)}\rightarrow {f(y)}$ is an edge in $\vec{H}$. We denote by $\image(f) = f(\vG)$ the image of $f$ which is a digraph. If for some edge $(x,y)$, $f(x)=f(y)$ in $\vec{H}$, then we will say that this edge has been collapsed and if $(f(x),f(y))\in E_{\vec{H}}$, then we say that the edge has been preserved. Since a digraph map must be a function on the discrete set of vertices, the image of a digraph map has at most as many vertices as the domain.
 
The \textbf{category of directed graphs} $\cD$ is the category whose objects are directed graphs, $\vec{G}$, and the morphisms are digraph maps, $f\colon\vec{G}\to\vec{H}$. A graph $\vG=(V,E)$ is \textbf{finite} if the vertex set $V$ is finite. We will use the notation $\mathcal{D}_0$ to denote the category whose objects are finite digraphs and morphisms are digraph maps. The category $\cD_0$ is a subcategory of $\cD$. Throughout this paper, the expression $X\in\cC$ for a category $\cC$ means $X$ is an object of the category $\cC$. We will write $f\in\cC(X,Y)$ to say $f$ is a morphism from $X$ to $Y$ in $\cC$.
   
    \subsection{Operations for directed graphs}\label{sec:Operation}

A \textbf{subdigraph} $\vec{X}$ of a digraph $\vec{G}$ denoted $\vec{X}\subseteq \vec{G}$ is a digraph for which $V_{\vec{X}}\subseteq V_{\vec{G}}$ and $E_{\vec{X}}\subseteq E_{\vec{G}}$. Note that even if $u,v\in V_{\vec{X}}$ and $(u,v)\in E_{\vec{G}}$, it is not necessarily the case that $(u,v)\in E_{\vec{X}}$. An \textbf{induced} subdigraph $\vec{X}$ of a digraph $\vec{G}$ denoted $\vec{X}\sqsubset\vec{G}$ is a subdigraph in which whenever $u,v\in V_{\vec{X}}$ and $(u,v)\in E_{\vec{G}}$, then $(u,v)\in E_{\vec{X}}$ too.

Let $\vG$ and $\vH$ be subdigraphs of a digraph $\vec Y$. The \textbf{intersection} of digraphs $\vec{G}$ and $\vec{H}$, denoted by $\vec{G}\cap\vec{H}$, is the digraph consisting of $V_{\vec{G}\cap\vec{H}}=V_{\vec{G}}\cap V_{\vec{H}}$ and $E_{\vec{G}\cap\vec{H}}=E_{\vec{G}}\cap E_{\vec{H}}$. Note that $\vec{G}\cap\vec{H}$ is not necessarily an induced subdigraph of $\vec{G}$ and $\vec{H}$. The \textbf{union} of digraphs $\vec{G}$ and $\vec{H}$, denoted by $\vec{G}\cup\vec{H}$, is the digraph consisting of $V_{\vec{G}\cup\vec{H}}=V_{\vec{G}}\cup V_{\vec{H}}$ and $E_{\vec{G}\cup\vec{H}}=E_{\vec{G}}\cup E_{\vec{H}}$. Note that $\vec{G}$ and $\vec{H}$ are necessarily induced subdigraphs of $\vec{G}\cup\vec{H}$. 
Note that the \textbf{disjoint union} of two digraphs $\vG$ and $\vec{H}$, denoted $\vG\coprod\vec{H}$ is given by the disjoint union of their respective vertex sets and edge sets, as sets. The disjoint union is the coproduct in the category $\cD$. 
    
The \textbf{graph Cartesian product} of two directed graphs $\vec{G}$ and $\vec{H}$ is the directed graph $\vec{G}\Box\vec{H}$, where the vertices are all ordered pairs $(u,v)$ such that $u\in V_{\vec{G}}$ and $v\in V_{\vec{H}}$, and $ {(u_1,v_1)}\rightarrow {(u_2,v_2)}$ is an edge in $\vec{G}\Box\vec{H}$ if either $u_1=u_2$ and $v_1\ra v_2$ in $\vec{H}$, or $u_1\ra u_2$ in $\vec{G}$ and $v_1=v_2$. Note that the graph Cartesian product is not a product in the category $\cD$.
    Given a fixed vertex $v_0\in V_{\vec{H}}$, we will denote by $\vec{G}\Box\{v_0\}$ the $v_0$-slice of $\vec{G}\Box\vec{H}$. It is the induced subdigraph where the vertices are all ordered pairs $(u,v_0)$ such that $u\in V_{\vec{G}}$ and the edges are those resulting from the edges of $\vG$.


        Let $\sim$ be an equivalence relation on the vertex set of a digraph $\vG$. The equivalence classes naturally form a digraph 
        $$
        \vG / \sim \quad  = \quad  (\tilde V, \tilde E),
        $$
        where $\tilde V = V/\sim$ is the set of equivalence classes of vertices, and $(x,y) \in \tilde E \subseteq \tilde V \times \tilde V$ if and only if $x \neq y$ and there exist $u \in x$, $v \in y$ such that $(u,v) \in E_{\vG}$. A \textbf{quotient digraph} $\vec{G}/\vec{X}$, for $\vec{X}\subseteq\vec{G}$ and $\vec{X}$ not necessarily connected, is the digraph $\vG\coprod\ast/\sim$ where $x\sim \ast$ for all $x\in V_{\vec{X}}$. The \textbf{mapping cylinder of a digraph map} $f\colon\vec{G}\to\vec{H}$ is given by $$\vec{M}_f:=\Big(\vec{G}\Box I^-\coprod \vec{H}\Big)\Big/ \sim,$$ where $V_{\vG \Box I^-} \ni (g,0)\sim f(g) \in \vH$ for each $g\in V_{\vec{G}}$.  The \textbf{cone} over a digraph $\vec{G}$, denoted by $C\vec{G}$, is the digraph $\big(\vec{G}\Box I^- \coprod \ast \big) \big/ \sim$, where $(g,1)\sim\ast$ for all $ g\in V_{\vec{G}}.$

\begin{dfn}
        For a digraph map $f: \vG \to \vH$, consider the set $V_{\image_2(f)} = \{h \in V_{\vH} : |f^{-1}(h)| \geq 2\}$. 
    A \textbf{modified mapping cylinder} $\widehat{M}_f$ is defined by $V_{\widehat{M}_f} = V_{M_f}$ and 
    \begin{equation}
        E_{\widehat{M}_f} = E_{M_f} \coprod \dot E_{\widehat{M}_f}  \coprod \ddot E_{\widehat{M}_f},
    \end{equation}
    where 
    \begin{align*}
         \dot E_{\widehat{M}_f} & = \{ \big((g,1),h'\big)  : (f(g),h')  \in E_{\image(f)}, h' \in V_{\image_2(f)}, g \in V_{\vG}, f(g) \notin  V_{\image_2(f)} \} , \\
         \ddot E_{\widehat{M}_f} & = \{ \big(h,(g',1)\big) :  (h,f(g')) \in E_{\image(f)}, h \in V_{\image_2(f)}, g' \in V_{\vG}, f(g') \notin  V_{\image_2(f)}  \} .
    \end{align*} 
\end{dfn}

\begin{dfn}
A \textbf{modified cone} $\widehat{C}_f\vG$ is a digraph $\left(V_{\widehat{C}_f\vG},E_{\widehat{C}_f\vG}\right)$ consisting of the following. The vertex set $V_{\widehat{C}_f\vG}$ is defined as $V_{\widehat{C}_f\vG} = V_{1}' \cup V_{1}'' \cup V_{C\vG}$, where 
    \begin{equation}\label{eq:ModifiedConeVertex}
    \begin{split}
        V_1' & = \{ h' \in V_{\image_2(f)}:  \text{there is } h \notin V_{\image_2(f)} \text{ such that } (h,h') \in E_{\image(f)}\}, \\
        V_{1}'' &= \{ h \in V_{\image_2(f)}:  \text{there is } h' \notin V_{\image_2(f)} \text{ such that } (h,h') \in E_{\image(f)}  \}.
    \end{split}
    \end{equation}
    For each $h \in (V_1' \cup V_1'')$, we choose a particular vertex $g=g_h \in V_{\vG}$ such that $f(g_h) = h$. We define
    \begin{equation}\label{eq:ModifiedConeEdge}
        \widehat e_h = \big((g_h,0),h \big).
    \end{equation}
    The edge set $E_{\widehat{C}_f\vG}$ is defined to be
    \begin{equation}\label{eq:ModifiedCone}
        E_{\widehat{C}_f\vG} = E_{C\vG} \coprod \tilde E_{C\vG} \coprod E_{\widehat{C}_f\vG}' \coprod E_{\widehat{C}_f\vG}'' \coprod \widehat E_{\widehat{C}_f\vG} , 
    \end{equation}
    where 
    \begin{align*}
        \tilde E_{C\vG} &= \{\big((g,0),\ast\big) : g \in V_{\vG} \},\\
        E_{\widehat{C}_f\vG}' &= \{(\ast,h') : h' \in V_1' \}, \\
        E_{\widehat{C}_f\vG}'' &= \{(h,\ast) :  h \in V_1'' \}, \\
        \widehat E_{\widehat{C}_f\vG} & = \{ \widehat{e}_h :  h \in (V_1 \cup V_1')  \} .
    \end{align*}
   \end{dfn}

    \begin{dfn}\label{DFN.digraph.cofiber}
    The \textbf{modified mapping cone} $\vec{C}(f)$ for a map $f\colon\vG\to\vec{H}$ is the digraph given by $$\vec{C}(f):=\Big(\widehat{C}_f\vG \coprod \widehat{M}_f \Big)\Big/ \sim,$$ where $V_{\widehat{C}_f\vG} \ni (g,0)\sim (g,1) \in V_{\widehat{M}_f}$ for each $g\in V_{\vec{G}}$, and $(V_1' \cup  V_1'') \ni h \sim h \in V_{\vH} \subseteq V_{\widehat{M}_f}$ for each $h \in V_1' \cup  V_1'' \subseteq V_{\vH}$.
    \end{dfn}

    \begin{rmk}\label{RMK.digraph.cofiber}
    The resulting digraph in the above definition is a modification of a ``cone on top of the mapping cylinder'' with a middle slice that serves as a copy of $\vG$. The main purpose of this construction is to ensure that Lemma~\ref{LEM.adams.3.1} and Proposition~\ref{PRP.Adams.3.5} hold. Additionally, note that, unlike in the topological setting, $\vec{C}(f)$ is \emph{not} a category-theoretic cofiber. 
    \end{rmk}

    \begin{exa}
    Let $\vG$ be the digraph $a \to b \to c$, and $\vH$ be the the digraph $0 \to 1 \to 2 \to 3 \to 0$, where vertices with the same label are identified. Consider the digraph map $f: \vG \to \vH$ defined by $f(a) = 0$ and $f(b) = f(c) = 1$. The digraphs $\widehat{M}_f$, $\widehat{C}_f\vG$, $\vec{C}(f)$ and $\widehat{M}_f \cap \widehat{C}_f\vG$ are shown in Figure~\ref{fig:cofiber}. 
    \end{exa}
    
\begin{figure}[h!]
\centering
\begin{tikzpicture}[
    myarrow/.style={-{Stealth[length=2mm]}, thick},
    bluearrow/.style={myarrow, blue},
    redarrow/.style={myarrow, red},
blackarrow/.style={myarrow, black},
purplearrow/.style={myarrow, purple},
cyanarrow/.style={myarrow, cyan},
greenarrow/.style={myarrow, green}
]
    \begin{scope}[xshift=0cm]
        \foreach \x in {0,1,2,5,8,9,10,13} {
            \fill (\x,0) circle (1.5pt);
        }
        \foreach \y in {0,1,2,4,5,6,8,9,10,12,13,14} {
            \fill (\y,1) circle (1.5pt);
        }
        \foreach \z in {5,9} {
            \fill (\z,2) circle (1.5pt);
        }
        \foreach \w in {1,9} {
            \fill (\w,-0.5) circle (1.5pt);
        }

        \node[above] at (1.3,1.1) {\tiny \textcolor{green}{$\vec G$}};
        \node[below] at (0.3,-0.2) {\tiny \textcolor{blue}{$\vec H$}};
        \node[above] at (0,1.5) { $\widehat{M}_f$};
        \node[above] at (0,1.03) {\tiny{$\text{a}$}};
        \node[above] at (0.95,1.03) {\tiny{$\text{b}$}};
        \node[above] at (1.95,1.03) {\tiny{$\text{c}$}};
        \node[below] at (0,0.03) {\tiny{$\text{0}$}};
        \node[below] at (1,0.03) {\tiny{$\text{1}$}};
        \node[below] at (2,0.03) {\tiny{$\text{2}$}};
        \node[below] at (1,-0.47) {\tiny{$\text{3}$}};
                
        \draw[greenarrow] (0,1) -- (1,1);
        \draw[greenarrow] (1,1) -- (2,1);
        \draw[bluearrow] (0,0) -- (1,0);
        \draw[bluearrow] (1,0) -- (2,0);
        \draw[bluearrow] (2,0) -- (1,-0.5);
        \draw[bluearrow] (1,-0.5) -- (0,0);
        \draw[blackarrow] (0,1) -- (0,0);
        \draw[blackarrow] (1,1) -- (1,0);
        \draw[blackarrow] (2,1) -- (1,0);
        \draw[cyanarrow] (0,1) to[bend left=15] (1,0);

        \node[above] at (5,2) {\tiny{$\ast$}};
        \node[below] at (5,0.03) {\tiny{$\text{1}$}};

        \node[above] at (3.85,1.03) {\tiny{$\text{a}$}};
        \node[above] at (4.85,1.03) {\tiny{$\text{b}$}};
        \node[above] at (6,1.03) {\tiny{$\text{c}$}};

        \node[above] at (6.3,0.7) {\tiny \textcolor{green}{$\vG$}};
        \node[above] at (3.9,1.5) {$\widehat{C}_f\vG$};
        
        \draw[greenarrow] (4,1) -- (5,1);
        \draw[greenarrow] (5,1) -- (6,1);       
        \draw[bluearrow] (6,1) to[bend left=0] (5,0);

        \draw[redarrow] (5,2) to[bend left=15] (4,1);       
        \draw[redarrow] (5,2) to[bend left=15] (5,1);
        \draw[redarrow] (5,2) to[bend left=15] (6,1);   
        \draw[redarrow] (4,1) to[bend left=15] (5,2);       
        \draw[redarrow] (5,1) to[bend left=15] (5,2);
        \draw[redarrow] (6,1) to[bend left=15] (5,2);  

        \draw[cyanarrow] (5,2) to[bend right=10] (5,0);

        \node[above] at (9,2) {\tiny{$\ast$}};
        \node[below] at (8,0.03) {\tiny{$\text{0}$}};
        \node[below] at (9,0.03) {\tiny{$\text{1}$}};
        \node[below] at (10,0.03) {\tiny{$\text{2}$}};
        \node[below] at (9,-0.47) {\tiny{$\text{3}$}};

        \node[above] at (7.85,1.03) {\tiny{$\text{a}$}};
        \node[above] at (8.85,1.03) {\tiny{$\text{b}$}};
        \node[above] at (10,1.03) {\tiny{$\text{c}$}};

        \node[above] at (7.9,1.5) {$\vec C(f)$};
        
        \draw[greenarrow] (8,1) -- (9,1);
        \draw[greenarrow] (9,1) -- (10,1);       
        \draw[bluearrow] (8,0) -- (9,0);
        \draw[bluearrow] (9,0) -- (10,0);
        \draw[bluearrow] (10,0) -- (9,-0.5);
        \draw[bluearrow] (9,-0.5) -- (8,0);
        \draw[bluearrow] (8,1) -- (8,0);
        \draw[bluearrow] (9,1) -- (9,0);
        \draw[bluearrow] (10,1) -- (9,0);
        \draw[bluearrow] (8,1) to[bend left=15] (9,0);

        \draw[redarrow] (9,2) to[bend left=15] (8,1);       
        \draw[redarrow] (9,2) to[bend left=15] (9,1);
        \draw[redarrow] (9,2) to[bend left=15] (10,1);   
        \draw[redarrow] (8,1) to[bend left=15] (9,2);       
        \draw[redarrow] (9,1) to[bend left=15] (9,2);
        \draw[redarrow] (10,1) to[bend left=15] (9,2);  
        \draw[cyanarrow] (9,2) to[bend right=10] (9,0);

        \node[above] at (13.3,1.1) {\tiny \textcolor{green}{$\vec G$}};
        \node[above] at (12,1.5) { $\widehat{M}_f \cap \widehat{C}_f\vG$};
        \node[above] at (12,1.03) {\tiny{$\text{a}$}};
        \node[above] at (12.95,1.03) {\tiny{$\text{b}$}};
        \node[above] at (13.95,1.03) {\tiny{$\text{c}$}};
        \node[below] at (13,0.03) {\tiny{$\text{1}$}};
                
        \draw[greenarrow] (12,1) -- (13,1);
        \draw[greenarrow] (13,1) -- (14,1);
        \draw[bluearrow] (14,1) to[bend left=0] (13,0);
    \end{scope}

\end{tikzpicture}
        \caption{Modified mapping cone}
        \label{fig:cofiber}
\end{figure}
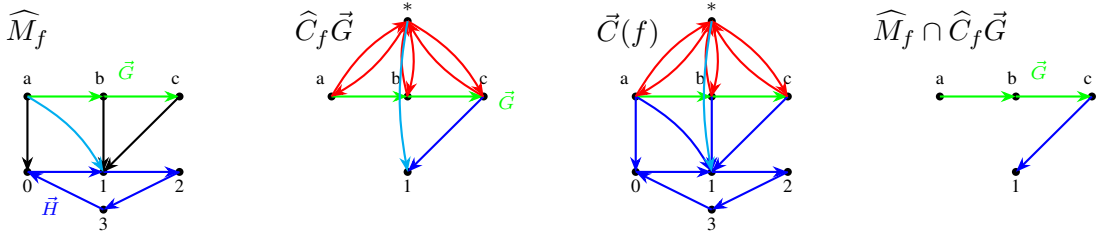

    \begin{dfn}\label{DFN.mapping.tube}
    Let $I_3$ be the digraph $0 \leftarrow 1 \rightarrow 2 \rightarrow 3$. A \textbf{mapping tube} between two digraph maps $f\colon\vec{G}\to\vec{H}$ and $g\colon\vec{G}\to\vec{H}$ is defined as 
    $$
    \overrightarrow{MT}_{f,g}= \Big((\vG \Box I_3)  \coprod \vH \Big) \Big/ \sim,
    $$
    where $(x,0) \sim f(x)$ and $(x,3) \sim g(x)$ for $x \in V_{\vG}$. 
    \end{dfn}

    \begin{rmk}
        Let \( f \amalg g: \vG \coprod \vG \to \vH \) be the digraph map that sends \( x_l \) to \( f(x_l) \) and \( x_r \) to \( g(x_r) \), where \( x_l \) and \( x_r \) denote the vertices in the left and right copies of \( \vG \) in \( \vG \coprod \vG \), respectively, corresponding to a vertex \( x \) in \( \vG \). The mapping tube $\overrightarrow{MT}_{f,g}$ can be decomposed as 
        \begin{equation}\label{eq:rmk:MappingTube}
            \overrightarrow{MT}_{f,g} = \vec{M}_{f \amalg g} \cup (\vG \Box I^+).
        \end{equation}
        The intersection of the two subdigraphs is $\vec{M}_{f \amalg g} \cap (\vG \Box I^+) = \vG \coprod \vG$.
    \end{rmk}

    \begin{exa}
        Let $\vG$ be the digraph $a \to b \to c$, and $\vH$ be the the digraph $0 \to 1 \to 2 \to 3 \to 0$, where vertices with the same label are identified. Consider the digraph map $f,g: \vG \to \vH$ defined by $f(a) = 0$, $f(b) = f(c) = 1 =g(a)$ and $g(b)=g(c) = 2$. The digraph $\overrightarrow{MT}_{f,g}$ is shown in Figure~\ref{fig:mtfg}.
    \end{exa}

    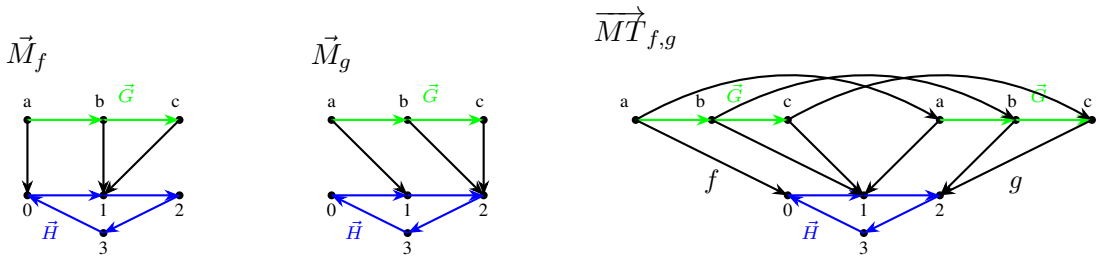
\begin{figure}[h!]
        \centering
\centering
\begin{tikzpicture}[
    myarrow/.style={-{Stealth[length=2mm]}, thick},
    bluearrow/.style={myarrow, blue},
    redarrow/.style={myarrow, red},
blackarrow/.style={myarrow, black},
purplearrow/.style={myarrow, purple},
cyanarrow/.style={myarrow, cyan},
greenarrow/.style={myarrow, green}
]
    \begin{scope}[xshift=0cm]
        \foreach \x in {0,1,2,4,5,6,10,11,12} {
            \fill (\x,0) circle (1.5pt);
        }
        \foreach \y in {0,1,2,4,5,6,8,9,10,12,13,14} {
            \fill (\y,1) circle (1.5pt);
        }
        \foreach \w in {1,5,11} {
            \fill (\w,-0.5) circle (1.5pt);
        }

        \node[above] at (1.3,1.1) {\tiny \textcolor{green}{$\vec G$}};
        \node[below] at (0.3,-0.2) {\tiny \textcolor{blue}{$\vec H$}};
        \node[above] at (0,1.5) { $\vec{M}_f$};
        \node[above] at (0,1.03) {\tiny{$\text{a}$}};
        \node[above] at (0.95,1.03) {\tiny{$\text{b}$}};
        \node[above] at (1.95,1.03) {\tiny{$\text{c}$}};
        \node[below] at (0,0.03) {\tiny{$\text{0}$}};
        \node[below] at (1,0.03) {\tiny{$\text{1}$}};
        \node[below] at (2,0.03) {\tiny{$\text{2}$}};
        \node[below] at (1,-0.47) {\tiny{$\text{3}$}};
                
        \draw[greenarrow] (0,1) -- (1,1);
        \draw[greenarrow] (1,1) -- (2,1);
        \draw[bluearrow] (0,0) -- (1,0);
        \draw[bluearrow] (1,0) -- (2,0);
        \draw[bluearrow] (2,0) -- (1,-0.5);
        \draw[bluearrow] (1,-0.5) -- (0,0);
        \draw[blackarrow] (0,1) -- (0,0);
        \draw[blackarrow] (1,1) -- (1,0);
        \draw[blackarrow] (2,1) -- (1,0);

        \node[above] at (5.3,1.1) {\tiny \textcolor{green}{$\vec G$}};
        \node[below] at (4.3,-0.2) {\tiny \textcolor{blue}{$\vec H$}};
        \node[above] at (4,1.5) { $\vec{M}_g$};
        \node[above] at (4,1.03) {\tiny{$\text{a}$}};
        \node[above] at (4.95,1.03) {\tiny{$\text{b}$}};
        \node[above] at (5.95,1.03) {\tiny{$\text{c}$}};
        \node[below] at (4,0.03) {\tiny{$\text{0}$}};
        \node[below] at (5,0.03) {\tiny{$\text{1}$}};
        \node[below] at (6,0.03) {\tiny{$\text{2}$}};
        \node[below] at (5,-0.47) {\tiny{$\text{3}$}};
                
        \draw[greenarrow] (4,1) -- (5,1);
        \draw[greenarrow] (5,1) -- (6,1);
        \draw[bluearrow] (4,0) -- (5,0);
        \draw[bluearrow] (5,0) -- (6,0);
        \draw[bluearrow] (6,0) -- (5,-0.5);
        \draw[bluearrow] (5,-0.5) -- (4,0);
        \draw[blackarrow] (4,1) -- (5,0);
        \draw[blackarrow] (5,1) -- (6,0);
        \draw[blackarrow] (6,1) -- (6,0);

        \node[below] at (10,0.03) {\tiny{$\text{0}$}};
        \node[below] at (11,0.03) {\tiny{$\text{1}$}};
        \node[below] at (12,0.03) {\tiny{$\text{2}$}};
        \node[below] at (11,-0.47) {\tiny{$\text{3}$}};

        \node[above] at (7.85,1.03) {\tiny{$\text{a}$}};
        \node[above] at (8.85,1.03) {\tiny{$\text{b}$}};
        \node[above] at (10,1.03) {\tiny{$\text{c}$}};
        \node[below] at (10.3,-0.2) {\tiny \textcolor{blue}{$\vec H$}};        
        \node[above] at (9.3,1.1) {\tiny \textcolor{green}{$\vec G$}};
        \node[above] at (13.3,1.1) {\tiny \textcolor{green}{$\vec G$}};
        \node[above] at (12,1.03) {\tiny{$\text{a}$}};
        \node[above] at (12.95,1.03) {\tiny{$\text{b}$}};
        \node[above] at (13.95,1.03) {\tiny{$\text{c}$}};
        
        \node[above] at (8,1.8) {$\overrightarrow{MT}_{f,g}$};
        \node[above] at (9,-0.1) {\small{$f$}};
        \node[above] at (13,-0.1) {\small{$g$}};
        
        \draw[greenarrow] (8,1) -- (9,1);
        \draw[greenarrow] (9,1) -- (10,1);       
        \draw[greenarrow] (12,1) -- (13,1);
        \draw[greenarrow] (13,1) -- (14,1);
        
        \draw[bluearrow] (10,0) -- (11,0);
        \draw[bluearrow] (11,0) -- (12,0);
        \draw[bluearrow] (12,0) -- (11,-0.5);
        \draw[bluearrow] (11,-0.5) -- (10,0);
        
        \draw[blackarrow] (8,1) -- (10,0);
        \draw[blackarrow] (9,1) -- (11,0);
        \draw[blackarrow] (10,1) -- (11,0);
        \draw[blackarrow] (12,1) -- (11,0);
        \draw[blackarrow] (13,1) -- (12,0);
        \draw[blackarrow] (14,1) -- (12,0);
        \draw[blackarrow] (8,1) to[bend left=30] (12,1);
        \draw[blackarrow] (9,1) to[bend left=30] (13,1);
        \draw[blackarrow] (10,1) to[bend left=30] (14,1);
        
    \end{scope}

\end{tikzpicture}        \caption{A mapping tube between $f$ and $g$}
        \label{fig:mtfg}
    \end{figure}
    
\subsection{Homotopy for digraphs}
Two digraph maps $f,g\colon\vec{G}\to\vec{H}$ are \textbf{homotopic}, denoted $f\simeq g$, if there exists an $n\geq 1$ and a digraph map $F\colon\vec{G}\Box I_n\to \vec{H}$, for some line digraph $I_n\in\mathcal{I}_n$, such that $F\mid_{\vec{G}\times\{0\}}=f$ and $F\mid_{\vec{G}\times\{n\}}=g$. For every vertex $i\in V_{I_n}$, $F\mid_{\vec{G}\times \{i\}}$ must be a digraph map from $\vec{G}$ to $\vec{H}$. Thus, if two digraph maps, $f$ and $g$, are homotopic, then there must be a sequence of digraph maps, $\{f_j\}_{j=0}^n$, where $f_0=f$, $f_n=g$, and $f_j=F\mid_{\vec{G}\Box\{j\}}$ for $0<j<n$. We denote by $[f]$ the set of all digraph maps which are homotopic to $f$.

    \begin{exa}\label{ex:CofiberHpt}
        Let $f: \vG \to \vH$ be a digraph map which naturally induces a digraph map $\tilde f: \vG \to \vec{C}(f)$. Let $c$ be the constant map $\vG \to \vec{C}(f)$ mapping to the top point $\ast$. 
        Define a homotopy $F: \vG \Box I_2 \to \vec{C}(f)$ by 
        \begin{align*}
            F(g,0) &= f(g) \in \vH \hookrightarrow \vec{C}(f), \\
            F(g,1) &= g \in \vG  \hookrightarrow \widehat{C}_f\vG  \hookrightarrow \vec{C}(f), \\
            F(g,2) &= c(g) = \ast \in \widehat{C}_f\vG \hookrightarrow \vec{C}(f).
        \end{align*}
        This homotopy $F$ also shows that $\tilde f$, $c$ and the inclusion map $\vG \hookrightarrow \vec{C}(f)$ are homotopic.
    \end{exa}
    
Two digraphs are said to be \textbf{homotopically equivalent} (or of the same {\bf homotopy type}) if there exist two digraph maps, $g\colon\vec{G}\to\vec{H}$ and $h\colon\vec{H}\to\vec{G}$, such that $h\circ g\simeq \text{id}_{\vec{G}}$ and $g\circ h\simeq\text{id}_{\vec{H}}$. Such $g$ and $h$ are called {\bf homotopy equivalences}.  We will denote the class of all digraphs which are homotopically equivalent to $\vec{G}$ by $[\vec{G}]$ and every element of this set is said to be of the homotopy type of $\vec{G}$.

    \begin{exa}\label{EXA.mapping.cylinder.crushing} Let $f\colon\vG\ra\vH$ be a digraph map. The modified mapping cylinder $\widehat{M}_f$ and the digraph $\vH$ are homotopically equivalent. This can be shown by taking a homotopy $F\colon\widehat{M}_f\Box I^+\ra \widehat{M}_f$ defined by $F(-,0)=\text{id}_{\widehat{M}_f}$,  $F((g,0),1)=f(g)$ for all $g\in V_{\vG}$ and $F(h,1)=h$ for all $h\in V_{\vH}$.
    \end{exa}

A digraph $\vec{G}$ is said to be \textbf{contractible} if there exists a homotopy between $\text{id}_{\vec{G}}$ and a constant digraph map. We will need the following lemma.


    

    \begin{lem}\label{lem:ModifiedCone}
        A modified cone $\widehat{C}_f\vG$ is contractible.
    \end{lem}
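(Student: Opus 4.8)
The plan is to build a single digraph homotopy
$F \colon \widehat{C}_f\vG \Box I_2 \to \widehat{C}_f\vG$ over the $2$-step line digraph $I_2 = (0 \leftarrow 1 \to 2)$, running from the identity $F(-,0) = \id_{\widehat{C}_f\vG}$ to the constant map $F(-,2) \equiv \ast$ at the cone apex $\ast$, with a carefully chosen middle slice $\phi := F(-,1)$. The role of the middle slice is to absorb an asymmetry in the edge set \eqref{eq:ModifiedCone}: the apex $\ast$ emits an edge to every cone vertex $(g,0)$ (from $E_{C\vG}$) and to every $h' \in V_1'$ (from $E'_{\widehat{C}_f\vG}$), and receives an edge from every $(g,0)$ (from $\tilde E_{C\vG}$) and from every $h \in V_1''$ (from $E''_{\widehat{C}_f\vG}$), but there need be \emph{no} edge $\ast \to h$ for $h \in V_1''$. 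Hence one cannot collapse the whole digraph directly onto $\ast$ in a single slice, and the vertices of $V_1'\cup V_1''$ must be routed through the cone.

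Concretely, I would define $\phi$ by $\phi(\ast) = \ast$, by $\phi(g,0) = \ast$ for all $g \in V_{\vG}$, and by $\phi(h) = (g_h, 0)$ for all $h \in V_1' \cup V_1''$, where $g_h \in V_{\vG}$ is the chosen preimage used to define $\widehat e_h$ in \eqref{eq:ModifiedConeEdge}. The first step is to check that $\phi$ is a digraph map by going through the five families of edges in \eqref{eq:ModifiedCone}: every edge among the vertices $\{(g,0)\} \cup \{\ast\}$ (those in $E_{C\vG}$ and $\tilde E_{C\vG}$) is sent to the loop at $\ast$, hence collapsed; an edge $(\ast, h') \in E'_{\widehat{C}_f\vG}$ maps to $(\ast,(g_{h'},0)) \in E_{C\vG}$; an edge $(h,\ast) \in E''_{\widehat{C}_f\vG}$ maps to $((g_h,0),\ast) \in \tilde E_{C\vG}$; and $\widehat e_h = ((g_h,0),h)$ maps to $(\ast,(g_h,0)) \in E_{C\vG}$.

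The second step is to check that $F$ itself is a digraph map. On each slice $\widehat{C}_f\vG \Box \{i\}$ the map $F$ restricts to $\id$, to $\phi$, or to the constant map at $\ast$, each of which is a digraph map, so the horizontal edges $(x,i)\to(y,i)$ cause no trouble. For the vertical edges of $\widehat{C}_f\vG \Box I_2$, namely $(x,1)\to(x,0)$ and $(x,1)\to(x,2)$ for each vertex $x$, I need $\phi(x) \to x$ (or $\phi(x)=x$) and $\phi(x)\to\ast$ (or $\phi(x)=\ast$). Both hold by construction: for $x=(g,0)$ we have $\phi(x)=\ast$ with $(\ast,(g,0)) \in E_{C\vG}$; for $x=\ast$ both conditions are equalities; and for $x=h\in V_1'\cup V_1''$ we have $\phi(h)=(g_h,0)$ with $((g_h,0),h)=\widehat e_h$ and $((g_h,0),\ast)\in\tilde E_{C\vG}$ both edges. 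Since $F(-,0)=\id_{\widehat{C}_f\vG}$ and $F(-,2)$ is constant, $\widehat{C}_f\vG$ is contractible.

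I expect the only genuinely delicate point to be the pairing of the orientation $0 \leftarrow 1 \to 2$ on $I_2$ with the routing $\phi(h)=(g_h,0)$: the opposite orientation, or the naive choice $\phi(h)=\ast$, fails on $V_1''$ (resp.\ on $V_1'$) exactly because of the one-directional edges in $E''_{\widehat{C}_f\vG}$ and $E'_{\widehat{C}_f\vG}$. Everything else is finite bookkeeping over the five edge families of \eqref{eq:ModifiedCone}.
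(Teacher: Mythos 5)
Your proposal is correct and takes essentially the same approach as the paper: a two-step homotopy from $\id_{\widehat{C}_f\vG}$ to the constant map at $\ast$, whose middle slice routes each $h \in V_1' \cup V_1''$ to its chosen preimage $(g_h,0)$ via the edge $\widehat{e}_h$, using the two-way edges between the $(g,0)$ and $\ast$. The only (immaterial) differences are the orientation of $I_2$ and that you collapse the cone vertices $(g,0)$ to $\ast$ already in the middle slice, whereas the paper keeps them fixed there and collapses them only at the last step.
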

    \begin{proof}
        Let $c:\widehat{C}_f\vG \to \widehat{C}_f\vG$ be the constant digraph map with value $\ast \in \widehat{C}_f\vG$, and let $I_2$ be the digraph $0  \leftarrow 1 \leftarrow  2 $. 
         Define a homotopy $F:\widehat{C}_f\vG  \Box I_2  \to \widehat{C}_f\vG$ by requiring $F(-,0) = \id_{\widehat{C}_f\vG}$, $F(-,2) = c$,
        $$F(x,1) = \begin{cases}
            (g_x,0) & \text{ if } x \in V_1 \cup V_1', \\
            x & \text{ if } x \notin V_1 \cup V_1'.
        \end{cases}$$
        See \eqref{eq:ModifiedConeVertex}, \eqref{eq:ModifiedConeEdge} and \eqref{eq:ModifiedCone} for the notations.  
        It is straightforward to verify that $F$ is a homotopy between $\id_{\widehat{C}_f\vG}$ and $c$. This proves the lemma.
    \end{proof}

Digraph maps $f,g\colon \vec{G}\to \vec{H}$ are said to be \textbf{weakly homotopic}, denoted $f\simeq_w g$, if for every $\vec{K}\in\mathcal{D}_0$ and every digraph map $h\colon \vec{K}\to \vec{G}$, compositions $f\circ h$ and $g\circ h$ are homotopic. Let $[\vec{G},\vec{H}]_w$ denote the set of weak homotopy classes of digraph maps from $\vec{G}$ to $\vec{H}$. Note that, when $\vec{G}$ is a finite digraph,  $[\vec{G},\vec{H}]_w=[\vec{G},\vec{H}]$.

The \textbf{homotopy category of directed graphs}, denoted $\text{Ho}\mathcal{D}$, is a category in which the objects are directed graphs and the morphisms are homotopy equivalence classes of digraph maps. The {\bf homotopy category of finite directed graphs} $\text{Ho}\mathcal{D}_0$ and the {\bf weak homotopy category for directed graphs} $\text{wHo}\mathcal{D}$ are defined in the same manner. The category $\text{Ho}\mathcal{D}$ is a homotopy category in a category-theoretic sense. It is isomorphic to the localization $\mathcal{W}^{-1}\mathcal{D}$ of the category $\mathcal{D}$ with respect to the collection $\mathcal{W}$ of homotopy equivalences.

\begin{rmk}\label{EXA.HEP.does.not.hold}
    A key foundation in the classical homotopy theory is the homotopy extension property. It is well-known that if $X$ is a CW complex and $A$ a subcomplex, then the pair $(X,A)$ has the homotopy extension property. Nevertheless, such a property does \emph{not} hold in the category of digraphs. This makes the development of a digraph homotopy theory nontrivial. Here, a pair $(\vG,\vec{X})$ of a digraph and its subdigraph is said to have the {\em homotopy extension property} if given a map $f\colon \vG\to \vH$ and a homotopy $F: \vec{X} \Box I_n\to \vH$ such that $F(-,0)=f|_{\vec{X}}$, there exists $\widehat{F}\colon \vG \Box I_n \to \vH$ such that $\widehat{F}|_{\vec{X} \Box I_n}=F$ and $\widehat{F}(-,0) = f$.
    
    As a concrete example that the homotopy extension property fails, let us consider the digraph $ \vG =\vH = a \to b \to c \to a$ where the vertex with the same label are identified. See Figure~\ref{fig:triangle}. Let $\vec{X} = a \to b$, $f = \id_{\vG}$ and $F:\vec{X} \Box I^+ \to \vH$ be the homotopy given by $F(x,0) = x$ and $F(x,1) = b$ for $x=a,b$. It is clear that $F(-,0) = f|_{\vec{X}}$, and we claim that $F$ cannot be extended to $\widehat{F}:\vG \Box I^+ \to \vH$ such that $\widehat{F}(-,0) = f = \id_{\vG}$.

    If there exists such an extension $\widehat{F}$, then $\widehat{F}(x,0) = x$, for $x=a,b,c$, and $\widehat{F}(a,1) = \widehat{F}(b,1) = b$. There are three possibilities of $\widehat{F}(c,1)$: $a,b$ and $c$. (i) If $\widehat{F}(c,1) =a$, then the edge $(b,1) \to (c,1)$ is sent to $b \to a$ which is not an edge in $\vH$. (ii) If $\widehat{F}(c,1) =b$, then the edge $(c,0) \to (c,1)$ is sent to $c \to b$ which is not an edge in $\vH$.  (iii) If $\widehat{F}(c,1) =c$, then the edge $(c,1) \to (a,1)$ is sent to $c \to b$ which is not an edge in $\vH$. Therefore, we conclude that there is no such an extension $\widehat{F}.$
\end{rmk}

    \begin{figure}[h!]
        \centering
\centering
\begin{tikzpicture}[
    myarrow/.style={-{Stealth[length=2mm]}, thick},
    bluearrow/.style={myarrow, blue},
    redarrow/.style={myarrow, red},
blackarrow/.style={myarrow, black},
purplearrow/.style={myarrow, purple},
cyanarrow/.style={myarrow, cyan},
greenarrow/.style={myarrow, green}
]
    \begin{scope}[xshift=0cm]
        \foreach \x in {1,5} {
            \fill (\x,0) circle (1.5pt);
        }
        \foreach \y in {0,2,4,6} {
            \fill (\y,1) circle (1.5pt);
        }
        \foreach \z in {5} {
            \fill (\z,2) circle (1.5pt);
        }
        \foreach \w in {4,6} {
            \fill (\w,3) circle (1.5pt);
        }

        \node[above] at (0.3,0.1) {\tiny \textcolor{green}{$\vec{X}$}};
        \node[above] at (-0.2,1.5) { $\vG=\vH$};
        \node[above] at (0,1.03) {\tiny{$a$}};
        \node[above] at (0.95,0.03) {\tiny{$b$}};
        \node[above] at (1.95,1.03) {\tiny{$c$}};
                
        \draw[greenarrow] (0,1) -- (1,0);
        \draw[bluearrow] (1,0) -- (2,1);
        \draw[bluearrow] (2,1) -- (0,1);

        \node[above] at (7,1.5) { $\vG \Box I^+$};
        \node[below] at (3.7,1) {\tiny{$(a,0)$}};
        \node[below] at (5,0) {\tiny{$(b,0)$}};
        \node[below] at (6.3,1) {\tiny{$(c,0)$}}; 
        \node[above] at (3.7,3) {\tiny{$(a,1)$}};
        \node[above] at (5,2.2) {\tiny{$(b,1)$}};
        \node[above] at (6.3,3) {\tiny{$(c,1)$}};        
        \draw[bluearrow] (4,1) -- (5,0);
        \draw[bluearrow] (5,0) -- (6,1);
        \draw[bluearrow] (6,1) -- (4,1);
        \draw[bluearrow] (4,3) -- (5,2);
        \draw[bluearrow] (5,2) -- (6,3);
        \draw[bluearrow] (6,3) -- (4,3);
        \draw[blackarrow] (5,0) -- (5,2);
        \draw[blackarrow] (6,1) -- (6,3);
        \draw[blackarrow] (4,1) -- (4,3);        
    \end{scope}

\end{tikzpicture}        \caption{$\vG$ and $\vG \Box I^+$}
        \label{fig:triangle}
\end{figure}

\section{Brown functors and their properties}\label{SEC.section.3}
In this section, we define digraph Brown functors for finite digraphs and study their properties. Our approach is inspired by Adams' work \cite{Ad} on classical topological spaces.

\begin{nta} Consider a diagram $B\srl{f}\ra A\srl{g}\leftarrow C$ in the category of abelian groups \textbf{Ab}. We will use the notation $B\times_A C$ to denote the subset of $B\times C$ defined by $\{(b,c) : f(b)=g(c)\}$.
\end{nta}

\begin{dfn}\label{DFN.brown.functor} A \textbf{(digraph) Brown functor} is a functor $\BF \colon \hD_0^{\text{op}}\ra \textbf{Ab}$ satisfying the following axioms \begin{itemize}
    \item[(1)] \textbf{Triviality Axiom.} The functor $\BF$ sends a singleton to the trivial group.
    \item[(2)] \textbf{Additivity Axiom.} The functor $\BF$ sends coproduct to product. i.e. $\BF(\coprod_{\al\in \La}\vG_\al)=\prod_{\al\in \La} \BF(\vG_\al)$ for any family of digraphs $\{\vG_\al\}_{\al\in \La}$.
    \item[(3)] \textbf{Mayer-Vietoris Axiom.} For any digraphs $\vG_1,\vG_2\in \hD_0$, the map $\BF(\vG_1\cup \vG_2)\ra \BF(\vG_1)\times_{\BF(\vG_1\cap\vG_2)} \BF(\vG_2)$ induced by the inclusions is a surjection. 
\end{itemize}
\end{dfn}


Although our main interest lies in the study of finite digraphs, to establish the Brown representability theorem, we need to extend a Brown functor to the category $\cD$ of arbitrary digraphs, including both finite and infinite ones. More explicitly, given a Brown functor $\BF$, it can be extended to a functor on $\cD$ as follows. 

Let $\vec G \in \cD$. Consider $\{ \vec G_\alpha\}_{\alpha \in \Lambda}$, the set of all the finite subdigraphs of $\vec G$. Together with inclusions, it forms a directed set. We extend $\BF$ to a functor $\EBF$ by taking the inverse limit over this directed set:
\begin{equation}\label{eq:ExtendedBF-0}
\EBF(\vG):=\underset{\alpha \in \Lambda}{\varprojlim} \BF(\vG_\alpha).
\end{equation}
Explicitly, we have the identification 
\begin{equation}\label{eq:ExtendedBF}
    \EBF(\vG) = \Big\{ (x_\alpha)_{\alpha \in \Lambda}  \in \prod_{\alpha \in \Lambda} \BF(\vG_\alpha): \f i_{\alpha \beta}^\ast (x_\beta) = x_\alpha, \forall \vG_\alpha \subseteq  \vG_\beta \fsub  \vG \Big\},
\end{equation}
where the notation $\f i_{\alpha\beta}$ means the inclusion map $\f i_{\alpha \beta}: \vG_\alpha \hookrightarrow  \vG_\beta$, and $\vH \fsub \vG$ means that $\vH$ is a finite subdigraph of $\vG$.

Given a morphism $f:\vG \to \vH$ in $\cD$, we have the induced homomorphism $f^\ast: \EBF(\vH) \to \EBF(\vG)$ given by 
\begin{equation}\label{eq:EBFInducedHomo}
    f^\ast\big((y_\beta)_{\beta}\big) = \big(f^\ast(y_{f(\alpha)}) \big)_{\alpha},
\end{equation}
where $f(\alpha)$ is the index for the subdigraph $\vH_{f(\alpha)} = f(\vG_\alpha)$. 

The following properties of $\EBF$ can be easily verified, as in \cite{Ad}.

\begin{lem} The assignment $\EBF\colon \hD^{\text{op}}\ra \textbf{Ab}$, $\vG\mapsto \underset{\alpha}{\varprojlim} \BF(\vG_\alpha)$ is a functor which restricts to the functor $\BF\colon \hD_0^{\text{op}}\ra \textbf{Ab}$. Also an assignment $\EBF\colon \whD^{\text{op}}\ra \textbf{Ab}$ defined similarly is a functor that restricts to the functor $\BF\colon \hD_0^{\text{op}}\ra \textbf{Ab}$. 
\end{lem}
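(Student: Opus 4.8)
The plan is to verify that the assignment $\EBF(\vG) := \varprojlim_{\alpha} \BF(\vG_\alpha)$, together with the induced maps \eqref{eq:EBFInducedHomo}, is a well-defined functor, and then to check that restricting to finite digraphs recovers $\BF$. Functoriality is essentially a formal consequence of the universal property of inverse limits, so the argument should be organized so that the genuinely combinatorial points — that $f(\vG_\alpha)$ really is a finite subdigraph of $\vH$, and that the indexing is compatible with inclusions — are isolated and dispatched first.

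First I would record the two combinatorial observations. (a) If $\vG_\alpha$ is a finite subdigraph of $\vG$ and $f \colon \vG \to \vH$ is a digraph map, then $f(\vG_\alpha) = \image(f|_{\vG_\alpha})$ is a finite subdigraph of $\vH$: its vertex set is the finite set $f(V_{\vG_\alpha})$, and each edge of $\vG_\alpha$ is either collapsed or mapped to an edge of $\vH$, so $E_{f(\vG_\alpha)} \subseteq E_{\vH}$ is finite. This gives a well-defined index, which I will call $f(\alpha)$, with $\vH_{f(\alpha)} = f(\vG_\alpha)$. (b) If $\vG_\alpha \subseteq \vG_\beta$, then $f(\vG_\alpha) \subseteq f(\vG_\beta)$, so $f(\alpha) \leq f(\beta)$ in the directed set of finite subdigraphs of $\vH$, and $f|_{\vG_\alpha}$ factors through $\f i_{f(\alpha)f(\beta)}$. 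Applying the contravariant functor $\BF$ and using its functoriality on $\hD_0$, the squares relating $\f i_{\alpha\beta}^\ast$ and $\f i_{f(\alpha)f(\beta)}^\ast$ commute, which is exactly what is needed for $f^\ast$ as defined in \eqref{eq:EBFInducedHomo} to land in the subgroup described in \eqref{eq:ExtendedBF} and to be a group homomorphism.

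Next I would check the functor axioms. For identities, $\id^\ast$ sends $(x_\alpha)_\alpha$ to $(x_\alpha)_\alpha$ since $\id(\alpha) = \alpha$ and $\id^\ast = \id$ on each $\BF(\vG_\alpha)$. For composition, given $f \colon \vG \to \vH$ and $g \colon \vH \to \vec K$, one has $(g \circ f)(\vG_\alpha) = g(f(\vG_\alpha))$, hence $(g\circ f)(\alpha) = g(f(\alpha))$, and the contravariance $(g\circ f)^\ast = f^\ast \circ g^\ast$ on $\hD_0$ gives $(g \circ f)^\ast = f^\ast \circ g^\ast$ on $\EBF$ by unwinding \eqref{eq:EBFInducedHomo} componentwise. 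This establishes that $\EBF \colon \hD^{\op} \to \textbf{Ab}$ is a functor. Since homotopic maps induce the same map on each $\BF(\vG_\alpha)$ (homotopies restrict to finite subdigraphs, and $\BF$ is defined on $\hD_0$), the whole construction descends to $\whD$ as well, giving the second assertion; the only extra point for $\whD$ is that a weak homotopy between maps $\vG \to \vH$ restricts on each finite $\vG_\alpha$ to an honest homotopy, so the induced maps on $\varprojlim \BF(\vG_\alpha)$ agree. Finally, for the restriction claim: if $\vG$ is itself finite, then $\vG$ is the maximal element of its own directed set of finite subdigraphs, so the inverse limit is canonically $\BF(\vG)$, and under this identification $f^\ast$ from \eqref{eq:EBFInducedHomo} becomes the original $\BF(f)$; naturality of this identification in $\vG$ is immediate.

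I do not expect a serious obstacle here — the statement is the routine "$\varprojlim$ is functorial" bookkeeping flagged in the text as "easily verified, as in \cite{Ad}". The one place to be careful is observation (a)/(b): because intersections and images of subdigraphs need not be \emph{induced} subdigraphs (as emphasized earlier in the excerpt), one must phrase everything in terms of $f(\vG_\alpha)$ as an abstract finite digraph equipped with its inclusion into $\vH$, rather than as the induced subdigraph on the vertex set $f(V_{\vG_\alpha})$; with that caveat the commuting-square verification goes through verbatim.
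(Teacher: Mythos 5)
Your verification is correct and matches the intended routine argument: the paper itself gives no proof, merely noting the lemma "can be easily verified, as in Adams," and your write-up is exactly that verification (well-definedness of $f^\ast$ via the index $f(\alpha)$, the commuting squares with $\f i_{\alpha\beta}^\ast$, the functor axioms, and the identification $\EBF(\vG)\cong\BF(\vG)$ for finite $\vG$). You also correctly handle the one point worth making explicit — that a homotopy (or weak homotopy) $\vG\to\vH$ restricted to a finite $\vG_\alpha$ lands in a finite subdigraph of $\vH$, so compatibility of the components $(y_\beta)$ gives $f^\ast=g^\ast$ — so the descent to $\hD$ and $\whD$ goes through as claimed.
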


\begin{lem}\label{LEM.adams.3.4}
Let $\vG\in\cD$ and $\{\vG_\al\}_{\al\in\La'}$ be any directed set of (not necessarily finite) subdigraphs of $\vG$ such that $\bigcup_{\al \in \Lambda'} \vG_\al = \vG$. Then there is a canonical isomorphism \beqs
\Te\colon \EBF(\vG)&\ra\underset{\alpha \in \La'}{\varprojlim} \EBF(\vG_\alpha)\\
 x&\mapsto (\f i_\al^*x)_{\al\in\La'}
\eeqs where $\f i_\al:\vG_\al\embed \vG$ are the inclusion maps.
\end{lem}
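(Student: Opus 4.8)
The plan is to unwind both sides of the claimed isomorphism in terms of the base directed system of \emph{finite} subdigraphs of $\vG$, and to show that each is the inverse limit of the \emph{same} cofinal system. Write $\{\vG_\beta\}_{\beta\in\Lambda}$ for the set of all finite subdigraphs of $\vG$, so that by definition $\EBF(\vG)=\varprojlim_{\beta\in\Lambda}\BF(\vG_\beta)$. For each $\alpha\in\Lambda'$, the finite subdigraphs of $\vG_\alpha$ form a subfamily $\Lambda_\alpha\subseteq\Lambda$, and $\EBF(\vG_\alpha)=\varprojlim_{\beta\in\Lambda_\alpha}\BF(\vG_\beta)$. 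First I would check that $\Theta$ is well defined: for $\vG_\alpha\subseteq\vG_{\alpha'}$ in $\Lambda'$ the inclusions $\f i_\alpha$, $\f i_{\alpha'}$ and $\f i_{\alpha\alpha'}:\vG_\alpha\hookrightarrow\vG_{\alpha'}$ satisfy $\f i_{\alpha'}\circ\f i_{\alpha\alpha'}=\f i_\alpha$, hence functoriality of $\EBF$ (Lemma, previous page) gives $\f i_{\alpha\alpha'}^*\circ\f i_{\alpha'}^*=\f i_\alpha^*$, so the tuple $(\f i_\alpha^*x)_{\alpha}$ really lies in $\varprojlim_{\alpha}\EBF(\vG_\alpha)$; $\Theta$ is clearly a group homomorphism.

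Next I would construct the inverse. Since every finite subdigraph $\vG_\beta\subseteq\vG$ has finite vertex and edge sets, and $\bigcup_{\alpha\in\Lambda'}\vG_\alpha=\vG$ with $\Lambda'$ directed, a routine induction on the (finitely many) vertices and edges of $\vG_\beta$ produces a single $\alpha\in\Lambda'$ with $\vG_\beta\subseteq\vG_\alpha$; thus $\beta\in\Lambda_\alpha$. This is the cofinality statement that makes the argument work: the subsystem $\bigcup_{\alpha\in\Lambda'}\Lambda_\alpha$ of $\Lambda$ is all of $\Lambda$. Given a compatible family $(y_\alpha)_{\alpha\in\Lambda'}$ with $y_\alpha\in\EBF(\vG_\alpha)$, each $y_\alpha$ is itself a compatible tuple $(y_{\alpha,\beta})_{\beta\in\Lambda_\alpha}$ with $y_{\alpha,\beta}\in\BF(\vG_\beta)$; compatibility of the $y_\alpha$'s under the maps $\f i_{\alpha\alpha'}^*$ forces $y_{\alpha,\beta}=y_{\alpha',\beta}$ whenever $\beta\in\Lambda_\alpha\cap\Lambda_{\alpha'}$ (using that $\Lambda'$ is directed, so one can compare through a common upper bound). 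Hence setting $x_\beta:=y_{\alpha,\beta}$ for any $\alpha$ with $\beta\in\Lambda_\alpha$ is unambiguous, and one checks the resulting tuple $(x_\beta)_{\beta\in\Lambda}$ is compatible for the full system, i.e.\ defines an element $x\in\EBF(\vG)$. This $x\mapsfrom(y_\alpha)$ is the candidate inverse.

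Finally I would verify $\Theta$ and this map are mutually inverse: starting from $x=(x_\beta)_\beta\in\EBF(\vG)$, restricting to $\vG_\alpha$ gives $\f i_\alpha^*x=(x_\beta)_{\beta\in\Lambda_\alpha}$, and reassembling returns $(x_\beta)_\beta$; conversely, reassembling $(y_\alpha)$ into $x$ and then restricting to $\vG_\alpha$ recovers $y_\alpha$ because $x_\beta=y_{\alpha,\beta}$ for all $\beta\in\Lambda_\alpha$. Naturality/canonicity is immediate since every map in sight is induced by an inclusion. The only genuinely substantive step is the cofinality claim that every finite subdigraph of $\vG$ sits inside some $\vG_\alpha$ with $\alpha\in\Lambda'$; everything else is the standard bookkeeping that an inverse limit over a directed system is computed by any cofinal subsystem. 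I do not expect any obstacle beyond making sure the ``not necessarily finite'' subdigraphs $\vG_\alpha$ are handled purely through their own finite subdigraphs, which is exactly what \eqref{eq:ExtendedBF} allows.
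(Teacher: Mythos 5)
Your argument is correct and is exactly the standard cofinality argument the paper has in mind when it states this lemma is ``easily verified, as in Adams'': the substantive point, that every finite subdigraph of $\vG$ lies in a single $\vG_\alpha$ by directedness of $\Lambda'$, is handled properly, and the rest is the usual bookkeeping identifying both sides with the inverse limit of $\BF$ over all finite subdigraphs of $\vG$. No gaps; this matches the intended (omitted) proof.
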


\begin{lem}\label{lem:EBF-additivity} The functor $\EBF$ satisfies the additivity axiom.
\end{lem}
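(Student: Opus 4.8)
The plan is to compute $\EBF\bigl(\coprod_{\alpha\in\La}\vG_\alpha\bigr)$ directly from the defining inverse limit \eqref{eq:ExtendedBF} and identify it, term by term, with $\prod_{\alpha\in\La}\EBF(\vG_\alpha)$. Write $\vG:=\coprod_{\alpha\in\La}\vG_\alpha$. The starting observation is that every finite subdigraph $\vec K$ of $\vG$ decomposes uniquely as $\vec K=\coprod_{\alpha\in\La}\vec K_\alpha$ with $\vec K_\alpha:=\vec K\cap\vG_\alpha\fsub\vG_\alpha$ and $\vec K_\alpha=\emptyset$ for all but finitely many $\alpha$; moreover an inclusion $\vec K\subseteq\vec K'$ of finite subdigraphs of $\vG$ is exactly a family of inclusions $\vec K_\alpha\subseteq\vec K'_\alpha$, one for each $\alpha$. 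Hence the directed set of finite subdigraphs of $\vG$ is isomorphic to the directed set of finitely supported families $(\vec K_\alpha)_{\alpha\in\La}$ with $\vec K_\alpha\fsub\vG_\alpha$, ordered componentwise.

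Next I would apply the additivity axiom for the unextended Brown functor $\BF$ to obtain, for each such $\vec K$, an identification $\BF(\vec K)=\BF\bigl(\coprod_\alpha\vec K_\alpha\bigr)=\prod_{\alpha}\BF(\vec K_\alpha)$; here $\BF(\emptyset)=0$ --- the additivity axiom applied to the empty family --- so only the finitely many nonempty $\vec K_\alpha$ contribute. The one point that needs a short verification is that, under these identifications, the restriction homomorphism $\f i_{\vec K,\vec K'}^\ast\colon\BF(\vec K')\to\BF(\vec K)$ attached to $\vec K\subseteq\vec K'$ equals the product $\prod_\alpha\f i_{\vec K_\alpha,\vec K'_\alpha}^\ast$ of the restriction homomorphisms of the factors; this follows from the functoriality of $\BF$ and the naturality of the additivity identification with respect to the coproduct of the inclusion maps $\vec K_\alpha\hookrightarrow\vec K'_\alpha$.

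Granting this, an element of $\EBF(\vG)=\varprojlim_{\vec K\fsub\vG}\BF(\vec K)$ is a family $(x_{\vec K})_{\vec K}$ with $x_{\vec K}=(x_{\vec K,\alpha})_{\alpha}\in\prod_\alpha\BF(\vec K_\alpha)$ satisfying $x_{\vec K,\alpha}=\f i_{\vec K_\alpha,\vec K'_\alpha}^\ast(x_{\vec K',\alpha})$ whenever $\vec K\subseteq\vec K'$. Reading this off one coordinate at a time --- and noting that the finite subdigraphs $\vec L\fsub\vG_\alpha$ arise exactly as the $\alpha$-components of finite subdigraphs of $\vG$ (take $\vec K_\beta=\emptyset$ for $\beta\neq\alpha$) --- I would define $\Phi\colon\EBF(\vG)\to\prod_\alpha\EBF(\vG_\alpha)$ by sending $(x_{\vec K})_{\vec K}$ to the family whose $\alpha$-th entry is $(x_{\vec L,\alpha})_{\vec L\fsub\vG_\alpha}\in\EBF(\vG_\alpha)$, and $\Psi\colon\prod_\alpha\EBF(\vG_\alpha)\to\EBF(\vG)$ by sending $\bigl((y^\alpha_{\vec L})_{\vec L}\bigr)_\alpha$ to $\bigl((y^\alpha_{\vec K_\alpha})_\alpha\bigr)_{\vec K}$. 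The two compatibility conditions correspond to each other verbatim, so $\Phi$ and $\Psi$ are well defined; they are manifestly homomorphisms of abelian groups, they are mutually inverse, and $\Phi$ is readily seen to coincide with the canonical map induced by the inclusions $\vG_\alpha\hookrightarrow\vG$. Hence $\EBF\bigl(\coprod_\alpha\vG_\alpha\bigr)\cong\prod_\alpha\EBF(\vG_\alpha)$, which is the additivity axiom for $\EBF$.

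There is no genuine obstacle here: the only mildly delicate points are the two bookkeeping facts used above --- that the poset of finite subdigraphs of the coproduct is the componentwise-ordered poset of finitely supported tuples, and that under the additivity identification the restriction maps split as products --- after which the statement is a formal manipulation showing that the inverse limit ``distributes over'' the product. (Alternatively one can deduce the claim from Lemma~\ref{LEM.adams.3.4}, applied to the directed family $\{\coprod_{\alpha\in S}\vG_\alpha\}_{S\subseteq\La\text{ finite}}$, once finite additivity of $\EBF$ is established; but finite additivity is just the two-factor case of the argument above, so the direct computation is self-contained.)
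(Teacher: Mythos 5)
Your argument is correct and is exactly the verification the paper leaves to the reader (Lemma~\ref{lem:EBF-additivity} is stated without proof, with a pointer to Adams): identify the finite subdigraphs of $\coprod_\alpha\vG_\alpha$ with finitely supported tuples of finite subdigraphs of the $\vG_\alpha$, apply the additivity axiom of $\BF$ levelwise, and observe that the inverse limit then splits as the product of the componentwise inverse limits, compatibly with the canonical map induced by the inclusions. The only point to make explicit in a write-up is the convention $\BF(\emptyset)=0$ (equivalently, index only over the finite support), which you already note.
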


The next lemma is parallel to \cite[Lemma~3.1]{Ad}. However, a modified mapping cone $\vec{C}(f)$ of a digraph map $f$ is \emph{not} a category-theoretic cofiber, which introduces a technical difference. Therefore, we include a proof of the lemma. 

\begin{lem}\label{LEM.adams.3.1}
Let $f\colon\vec{G}\ra \vec{H}$ be a map of finite digraphs and $\BF$ a Brown functor. The sequence 
$$
\begin{tikzcd}
    \BF(\vec{G}) & \ar[l,"f^\ast"'] \BF(\vec{H}) & \ar[l,"i^\ast"'] \BF(\vec{C}(f))
\end{tikzcd}
$$ 
induced by the sequence $\vec{G}\srl{f}\ra \vec{H}\srl{i}\ra \vec{C}(f)$ of digraphs is exact at $\BF(\vec{H})$. Here $i:\vH \hookrightarrow \vec{C}(f)$ is the natural embedding map. 
\end{lem}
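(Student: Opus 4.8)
The plan is to imitate the classical proof that a cofibre sequence induces an exact sequence in a Brown-type functor, with the Mayer--Vietoris Axiom playing the role of the long exact sequence, while being careful that $\vec C(f)$ is not a genuine cofibre. I will show separately that $\image i^\ast\subseteq\ker f^\ast$ and that $\ker f^\ast\subseteq\image i^\ast$.

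The first inclusion is quick. The composite $\vG\xrightarrow{f}\vH\xrightarrow{i}\vec C(f)$ is the map $\tilde f$ of Example~\ref{ex:CofiberHpt}, which is homotopic there to the constant map $c$ at the cone point $\ast$. Since $\BF$ is a functor on $\hD_0$, $(i\circ f)^\ast=c^\ast$, and $c$ factors through the point $\ast$, so $c^\ast$ factors through $\BF(\ast)=0$ by the Triviality Axiom. Hence $f^\ast\circ i^\ast=(i\circ f)^\ast=0$.

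For the second inclusion, decompose $\vec C(f)=\widehat M_f\cup\widehat C_f\vG$ into finite subdigraphs, with $i$ factoring as $\vH\hookrightarrow\widehat M_f\hookrightarrow\vec C(f)$, and set $\vec K:=\widehat M_f\cap\widehat C_f\vG$. Unwinding Definition~\ref{DFN.digraph.cofiber} together with the definitions of the modified cylinder and modified cone, $\vec K$ is the common middle $\vG$-slice with one pendant vertex $h$ attached by the single edge $\widehat e_h$ at the chosen preimage $g_h$, for each $h\in V_1'\cup V_1''$; collapsing every such $h$ onto $g_h$ (with an obvious homotopy over a one-step line digraph) realizes $\vG\hookrightarrow\vec K$ as a homotopy equivalence. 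By Lemma~\ref{lem:ModifiedCone}, $\widehat C_f\vG$ is contractible, so $\BF(\widehat C_f\vG)=0$ by the Triviality Axiom; by Example~\ref{EXA.mapping.cylinder.crushing}, the inclusion $j\colon\vH\hookrightarrow\widehat M_f$ is a homotopy equivalence with retraction $r\colon\widehat M_f\to\vH$ collapsing the $\vG$-slice by $f$, so $r\circ j=\id_{\vH}$ and the restriction of $r$ along $\vG\hookrightarrow\vec K\hookrightarrow\widehat M_f$ is $f$. Now take $y\in\BF(\vH)$ with $f^\ast(y)=0$ and put $y':=r^\ast(y)\in\BF(\widehat M_f)$. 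Restricting along $j$ gives $j^\ast(y')=y$; restricting along $\vec K\hookrightarrow\widehat M_f$ and pulling back along the equivalence $\vG\hookrightarrow\vec K$ gives $f^\ast(y)=0$, so, since $\BF(\vec K)\cong\BF(\vG)$ under that equivalence, $y'$ restricts to $0$ in $\BF(\vec K)$. Thus $(y',0)\in\BF(\widehat M_f)\times_{\BF(\vec K)}\BF(\widehat C_f\vG)$, and the Mayer--Vietoris Axiom applied to $\widehat M_f\cup\widehat C_f\vG=\vec C(f)$ supplies $z\in\BF(\vec C(f))$ restricting to $y'$ on $\widehat M_f$. Then $i^\ast(z)=j^\ast(y')=y$, so $y\in\image i^\ast$, and exactness at $\BF(\vH)$ follows.

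The diagram chase at the end is routine; the real work --- and the step where the digraph theory genuinely departs from the topological model --- is identifying $\vec K=\widehat M_f\cap\widehat C_f\vG$ from the rather involved definitions and checking that the extra vertices $V_1'\cup V_1''$, which are present precisely because $\vec C(f)$ is not a categorical cofibre, contribute only pendant edges and therefore deformation-retract away, leaving a copy of $\vG$ mapped into $\widehat M_f$ by $f$ up to homotopy.
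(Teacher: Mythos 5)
Your proof is correct and takes essentially the same route as the paper's: both halves of the exactness are established via Example~\ref{ex:CofiberHpt} and the Triviality Axiom on one side, and via the Mayer--Vietoris Axiom applied to the decomposition $\vec{C}(f)=\widehat{C}_f\vG\cup\widehat{M}_f$ together with the contractibility of $\widehat{C}_f\vG$, the equivalence $\vH\simeq\widehat{M}_f$, and the identification of the intersection with $\vG$ up to homotopy on the other. The only (welcome) difference is that you make explicit the pendant-edge structure of $\widehat{M}_f\cap\widehat{C}_f\vG$ and the resulting deformation retraction onto the $\vG$-slice, a point the paper only asserts by analogy with Lemma~\ref{lem:ModifiedCone}.
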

\begin{proof}
First note that the composition $i \circ f$ is homotopic to a constant map $c:\vG \to \vec{C}(f)$ by Example~\ref{ex:CofiberHpt}. Since the constant map $c$ can be obtained by the composition 
$$
\begin{tikzcd}
    \vG \ar[r] & \ast \ar[r] & \vec{C}(f)
\end{tikzcd}
$$
and $\BF(\ast) = 0$, it follows that $f^\ast \circ i^\ast = c^\ast =0$, i.e.\ $\image(i^\ast)\subseteq \ker(f^\ast)$. 

To show $\ker(f^\ast) \subseteq \image(i^\ast)$, recall that $\vec{C}(f)$ can be decomposed as $\vec{C}(f) = \widehat{C}_f \vG \cup \widehat{M}_f$. By a homotopy similar to the one in the proof of Lemma~\ref{lem:ModifiedCone}, one can show that $\widehat{C}_f\vG \cap \widehat{M}_f$ is homotopically equivalent to $\vG$. See Figure~\ref{fig:cofiber}. By the Mayer-Vietoris property of $\BF$, the map
\begin{equation}\label{eq:LEM.adams.3.1-1}
    \BF(\vec{C}(f)) \to \BF(\widehat{C}_f \vG) \times_{\BF(\vG)} \BF(\widehat{M}_f) 
\end{equation}
induced by inclusions is surjective. Furthermore, since $\widehat{C}_f \vG$ is contractible (by Lemma~\ref{lem:ModifiedCone}), and $\vH \hookrightarrow \widehat{M}_f$ is a homotopy equivalence (by Example~\ref{EXA.mapping.cylinder.crushing}), we have $ \BF(\widehat{C}_f \vG) =0$ and $\BF(\widehat{M}_f) \cong \BF(\vH)$. Since the inclusion map $\vG \hookrightarrow \widehat{M}_f$ is homotopic to the composition $\vG \xrightarrow{f} \vH \hookrightarrow \widehat{M}_f$ (via a homotopy similar to the one in Example~\ref{ex:CofiberHpt}), the surjectivity of the map \eqref{eq:LEM.adams.3.1-1} is equivalent to the surjectivity of the following map
\begin{equation}\label{eq:LEM.adams.3.1-2}
    \BF(\vec{C}(f)) \to 0 \times_{\BF(\vG)} \BF(\vH), \quad y \mapsto (0,i^\ast y),
\end{equation}
where the fiber product is over the diagram $0 \xrightarrow{\; 0\; } \BF(\vG) \xleftarrow{f^\ast} \BF(\vH)$. Now given any $x \in \ker(f^\ast)$, we have $(0,x) \in 0 \times_{\BF(\vG)} \BF(\vH)$, and it follows from the surjectivity of \eqref{eq:LEM.adams.3.1-2} that there exists $y \in  \BF(\vec{C}(f))$ such that $x = i^\ast(y) \in \image(i^\ast)$. This completes the proof.
\end{proof}

The following lemma can be obtained by applying Lemma~\ref{LEM.adams.3.1} to the following sequence of digraphs: 
\begin{equation}\label{eq:LEM.adams.3.2}
\vG\coprod \vH\srl{f}\ra \vG\cup \vH \srl{g}\ra \vec{C}(f)\srl{h}\ra \vec{C}(g). 
\end{equation}

\begin{lem}\label{LEM.adams.3.2} 
Let $\vG$ and $\vH$ be finite digraphs. There is an exact sequence \beq\label{EQN.Adams.Lemma.3.2.exact.seq} \BF(\vG)\times \BF(\vH)\isom \BF(\vG\coprod \vH)\srl{f^*}\leftarrow \BF(\vG\cup \vH) \srl{g^*}\leftarrow \BF(\vec{C}(f))\srl{h^*}\leftarrow \BF(\vec{C}(g)),\eeq which is natural in $\vG$ and $\vH$.
\end{lem}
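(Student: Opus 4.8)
The plan is to apply Lemma~\ref{LEM.adams.3.1} three times along the sequence of digraphs in \eqref{eq:LEM.adams.3.2} and then splice the resulting exactness statements together. Concretely, write $f\colon \vG\amalg\vH \to \vG\cup\vH$ for the canonical map induced by the two inclusions, $g\colon \vG\cup\vH\hookrightarrow \vec{C}(f)$ for the natural embedding, and $h\colon \vec{C}(f)\hookrightarrow \vec{C}(g)$ for the natural embedding. Lemma~\ref{LEM.adams.3.1} applied to $f$ gives exactness of $\BF(\vG\amalg\vH)\xleftarrow{f^*}\BF(\vG\cup\vH)\xleftarrow{g^*}\BF(\vec{C}(f))$ at $\BF(\vG\cup\vH)$; applied to $g$ it gives exactness of $\BF(\vG\cup\vH)\xleftarrow{g^*}\BF(\vec{C}(f))\xleftarrow{h^*}\BF(\vec{C}(g))$ at $\BF(\vec{C}(f))$. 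Together these two give exactness of the displayed four-term sequence at the two middle terms $\BF(\vG\cup\vH)$ and $\BF(\vec{C}(f))$, which is exactly the assertion of the lemma once we also note the leftmost identification.

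For the leftmost identification $\BF(\vG)\times\BF(\vH)\isom\BF(\vG\amalg\vH)$, I would simply invoke the Additivity Axiom in Definition~\ref{DFN.brown.functor}(2): $\BF$ sends coproducts to products, so $\BF(\vG\amalg\vH)\cong\BF(\vG)\times\BF(\vH)$, and under this isomorphism the map $f^*$ becomes the map $(\mathrm{incl}_{\vG}^*,\mathrm{incl}_{\vH}^*)$. This is the standard reformulation of "the Mayer--Vietoris comparison map lands in a fiber product" as a genuine exact sequence, and it requires nothing beyond the axioms.

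The remaining point is naturality in $\vG$ and $\vH$, regarded as subdigraphs of an ambient digraph (or, in the relevant applications, as an abstract pair together with the data of a common target). For this I would check that each of the three constructions in \eqref{eq:LEM.adams.3.2} --- the coproduct $\vG\amalg\vH$, the union $\vG\cup\vH$, the modified mapping cone $\vec{C}(f)$, and then $\vec{C}(g)$ --- is functorial in the input data, so that a morphism of pairs induces a commuting ladder of digraph maps over \eqref{eq:LEM.adams.3.2}; applying the contravariant functor $\BF$ then yields a commuting ladder of abelian groups, which is precisely naturality of \eqref{EQN.Adams.Lemma.3.2.exact.seq}. The disjoint union and the union are evidently functorial; the modified mapping cone $\vec{C}(-)$ is functorial because each of the pieces $\widehat{C}_f\vG$ and $\widehat{M}_f$ in Definitions~\ref{DFN.digraph.cofiber} and the preceding ones is built canonically from $f$. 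I expect the \emph{main obstacle} to be precisely this last step: unlike in the topological setting, $\vec{C}(f)$ is not a category-theoretic cofiber (Remark~\ref{RMK.digraph.cofiber}), so functoriality of $\vec{C}(-)$ in $f$ is not automatic and must be verified by hand, paying attention to the choices of representatives $g_h$ in \eqref{eq:ModifiedConeEdge} and to the sets $V_{\image_2(f)}$, $V_1'$, $V_1''$, which depend on $f$ in a way that does not obviously commute with arbitrary morphisms. One mild subtlety worth flagging is that strict functoriality may only hold up to homotopy, so it may be cleanest to verify that the induced square of digraphs commutes in $\hD_0$ (rather than on the nose in $\cD_0$), which is all that is needed since $\BF$ factors through $\hD_0$.
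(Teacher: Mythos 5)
Your proposal matches the paper's own (one-line) proof: the paper obtains Lemma~\ref{LEM.adams.3.2} exactly by applying Lemma~\ref{LEM.adams.3.1} to the maps $f$ and $g$ in the sequence \eqref{eq:LEM.adams.3.2}, with the leftmost identification coming from the Additivity Axiom. Your extra care about naturality (checking that the constructions, including $\vec{C}(-)$, induce a commuting ladder at least up to homotopy) is a reasonable elaboration of a point the paper leaves implicit, not a deviation in approach.
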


The next proposition is parallel to \cite[Proposition 3.5]{Ad}, which is crucial. Although the structure of the proof is similar to Adams', there are several technical differences. The key idea in the proof of \cite[Proposition 3.5]{Ad} is the use of the suspension of the finite intersection of the given CW complexes, and our main difficulty is that the suspension does not work well in our setting. To fix this problem, we consider modified mapping cones and a technical finite digraph $\vec{S}$ that plays the role of a suspension in our proof.

\begin{prp}\label{PRP.Adams.3.5} Let $\vG,\vH\in \cD$ and $\vG\cap\vH\in \cD_0$. Then the map 
\begin{equation}\label{eq:PRP.Adams.3.5}
\EBF(\vG\cup\vH)\ra \EBF(\vG)\times_{\EBF(\vG\cap\vH)}\EBF(\vH)    
\end{equation}
induced by the inclusion maps $\vG\embed \vG\cup\vH$ and $\vH\embed \vG\cup\vH$ is onto.
\end{prp}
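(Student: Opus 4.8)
The plan is to follow the method of \cite[Proposition 3.5]{Ad}, with the modified mapping cone and the digraph $\vec{S}$ of the introduction playing the role of the suspension used there. Put $\vec{A}:=\vG\cap\vH\in\cD_0$. First I would choose directed systems $\{\vG_\alpha\}$ of finite subdigraphs of $\vG$ and $\{\vH_\beta\}$ of $\vH$ so that each $\vG_\alpha$ and each $\vH_\beta$ contains $\vec{A}$ together with the finitely many edges of $\vG$, resp.\ $\vH$, joining vertices of $\vec{A}$; then $\vG_\alpha\cap\vH_\beta=\vec{A}$ for all $\alpha,\beta$, the digraphs $\vG_\alpha\cup\vH_\beta$ are cofinal among the finite subdigraphs of $\vG\cup\vH$, and by \eqref{eq:ExtendedBF} and Lemma~\ref{LEM.adams.3.4} one has $\EBF(\vG)=\varprojlim_\alpha\BF(\vG_\alpha)$, $\EBF(\vH)=\varprojlim_\beta\BF(\vH_\beta)$, $\EBF(\vG\cup\vH)=\varprojlim_{(\alpha,\beta)}\BF(\vG_\alpha\cup\vH_\beta)$ and $\EBF(\vec{A})=\BF(\vec{A})$. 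Consequently an element of the target of \eqref{eq:PRP.Adams.3.5} is a pair $(a,b)=\big((a_\alpha)_\alpha,(b_\beta)_\beta\big)$ whose components share a common restriction $e\in\BF(\vec{A})$, and it suffices to build a family $(c_{\alpha\beta})$, compatible under restrictions, with $c_{\alpha\beta}|_{\vG_\alpha}=a_\alpha$ and $c_{\alpha\beta}|_{\vH_\beta}=b_\beta$; such a family is precisely an element of $\EBF(\vG\cup\vH)$ mapping to $(a,b)$.

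For each $(\alpha,\beta)$ the pair $(a_\alpha,b_\beta)$ lies in $\BF(\vG_\alpha)\times_{\BF(\vec{A})}\BF(\vH_\beta)$, so the Mayer--Vietoris axiom for \emph{finite} digraphs makes the set $T_{\alpha\beta}$ of lifts $c_{\alpha\beta}\in\BF(\vG_\alpha\cup\vH_\beta)$ a nonempty coset of $U_{\alpha\beta}:=\ker\!\big(\BF(\vG_\alpha\cup\vH_\beta)\to\BF(\vG_\alpha)\times\BF(\vH_\beta)\big)$. Applying Lemma~\ref{LEM.adams.3.2} to $(\vG_\alpha,\vH_\beta)$ identifies $U_{\alpha\beta}$ with a quotient of $\BF(\vec{C}(f_{\alpha\beta}))$, where $f_{\alpha\beta}\colon\vG_\alpha\coprod\vH_\beta\to\vG_\alpha\cup\vH_\beta$ is the folding map. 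This is where the hypothesis $\vec{A}\in\cD_0$ is used: by the analogue of property~(iii) of the modified mapping cone (cf.\ Figure~\ref{fig:suspension}), $\vec{C}(f_{\alpha\beta})$ is homotopy equivalent to a \emph{fixed} finite digraph $\vec{S}$ built only from $\vec{A}$, and the structure maps $\vec{C}(f_{\alpha\beta})\to\vec{C}(f_{\alpha'\beta'})$ are compatible with these equivalences. Hence $\BF(\vec{C}(f_{\alpha\beta}))\cong\BF(\vec{S})$ with all transition maps isomorphisms, so the $U_{\alpha\beta}$ are compatible quotients of the single group $\BF(\vec{S})$; in particular every transition map $U_{\alpha'\beta'}\to U_{\alpha\beta}$ is onto (the inverse system is Mittag--Leffler), and so is every restriction map $T_{\alpha'\beta'}\to T_{\alpha\beta}$.

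It remains to select a point of $\varprojlim_{(\alpha,\beta)}T_{\alpha\beta}$, and \textbf{this assembly is the step I expect to be the technical heart of the proof.} I would carry it out by transfinite recursion along a well-ordering of the index poset $\{(\alpha,\beta)\}$ refining the partial order: such a well-ordering exists because that poset has finite down-sets (a subdigraph of a finite digraph is finite, so each $\downarrow(\alpha,\beta)$ is finite). At the stage of $(\alpha,\beta)$ one must extend the already-chosen compatible family, indexed by the finite poset $\downarrow(\alpha,\beta)\setminus\{(\alpha,\beta)\}$, by a single element of $T_{\alpha\beta}$; since every digraph, pairwise intersection and union occurring in this finite configuration is finite --- and the union of $\vG_\gamma\cup\vH_\delta$ over the maximal $(\gamma,\delta)$ below $(\alpha,\beta)$ is exactly $\vG_\alpha\cup\vH_\beta$ --- the extension is produced by iterated application of the Mayer--Vietoris axiom, the surjectivity and the uniform control of the $U_{\alpha\beta}$ from the previous paragraph being used to reconcile the several maximal pieces and to absorb the non-injectivity of Mayer--Vietoris. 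The resulting family is compatible by construction and maps to $(a,b)$, which proves the proposition. Throughout, the delicate point is that $\cD$ carries no honest suspension, which forces the whole argument through $\vec{C}(f)$ and $\vec{S}$; it is the finiteness of $\vG\cap\vH$ --- making $\vec{S}$ independent of the approximating pieces --- that keeps the inverse system of ambiguity groups tame enough for this last step to succeed.
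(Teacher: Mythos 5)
Your setup agrees with the paper's: you reduce to the nonemptiness of the inverse limit of the lift-sets $T_{\alpha\beta}$ (the paper's $H_{\al,\be}$ of \eqref{eq:PRP.Adams.3.5-0}), you identify the ambiguity group $U_{\alpha\beta}=\image(g_{\al\be}^\ast)$ via Lemma~\ref{LEM.adams.3.2}, and you correctly use the fixed finite digraph $\vec S$ attached to $\vG\cap\vH$ to get a transitive action of the single group $\BF(\vec S)$ on every $T_{\alpha\beta}$ commuting with restrictions, whence all restriction maps $T_{\alpha'\beta'}\to T_{\alpha\beta}$ are onto. This is exactly the paper's condition (i).

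The gap is the step you yourself flag as the ``technical heart'': surjectivity of the transition maps does \emph{not} yield a point of the inverse limit over an arbitrary (possibly uncountable) directed index set, and the transfinite recursion you sketch does not repair this. Concretely, at the stage of $(\al,\be)$ you must find $w\in T_{\al,\be}$ restricting to the already-chosen $c_i\in T_{\ga_i,\de_i}$ for the several maximal indices $(\ga_i,\de_i)$ below $(\al,\be)$; writing $\imath_i^\ast w = u_i\cdot c_i$, this amounts to finding $u\in\BF(\vec S)$ lying in $\bigcap_i\bigl(-u_i+K_{\ga_i\de_i}\bigr)$, where $K_{\ga\de}=\ker(g_{\ga\de}^\ast r_{\ga\de}^\ast)$ is the stabilizer. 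These cosets of different subgroups need not intersect, so individual surjectivity of the restrictions does not give the joint surjectivity your extension step needs; and even granting local extensions, an inverse system of nonempty sets with surjective transition maps over an uncountable directed poset can have empty limit (``Mittag--Leffler'' only rescues countable cofinal systems). This is precisely why the paper invokes Adams' Corollary~2.8, which requires, in addition to (i), the condition (ii) that the objects of $\overline{\cC}$ fall into countably many equivalence classes. The second half of the paper's proof is devoted to (ii): it characterizes $\overline{\cC}(H_{\ga,\de},H_{\al,\be})\neq\emptyset$ by the containment $j_{\ga\de}^\ast h_{\ga\de}^\ast\bigl(\BF(\vec{C}(g_{\ga\de}))\bigr)\subseteq j_{\al\be}^\ast h_{\al\be}^\ast\bigl(\BF(\vec{C}(g_{\al\be}))\bigr)$ in $\BF(\vec S)$, and then uses the finiteness of $\vec S$ --- hence the countability of homotopy classes of maps of finite digraphs into $\vec S$ --- to bound the number of possible images. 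Your proposal has no analogue of this countability argument, and without it the conclusion does not follow.
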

\begin{proof}
To prove Proposition~\ref{PRP.Adams.3.5}, recall the concept of generalized inverse limits in \cite[Section~2]{Ad}. 

Let 
\begin{align*}
    \{\vG_\al\}_{\al\in\La} & = \Big\{ \vG_\al \fsub \vG : \vG_\al \supseteq \vG \cap \vH \Big\}, \\
    \{\vH_\be\}_{\be\in\Ga} & = \Big\{ \vH_\be \fsub \vH : \vH_\be \supseteq \vG \cap \vH \Big\}.
\end{align*}
Take any $(x,y)\in \EBF(\vG)\times_{\BF(\vG\cap\vH)}\EBF(\vH)$. This means that for every $\al \in \La$ we have an element $x_\al\in \BF(\vG_\al)$ as a restriction of $x\in \EBF(\vG)$, and for every $\be \in \Ga$ we have an element $y_\be\in \BF(\vH_\be)$ as a restriction of $y\in \EBF(\vH)$ such that $(x_\al,y_\be)\in \BF(\vG_\al)\times_{\BF(\vG\cap\vH)}\BF(\vH_\be)$. Notice here that $\vG_\al\cap\vH_\be\supseteq \vG\cap\vH$. 

Let $\psi_{\al\be}:\BF(\vG_\al\cup\vH_\be) \to \BF(\vG_\al)\times_{\BF(\vG\cap\vH)}\BF(\vH_\be)$ be the map induced by the inclusions, and let 
\begin{equation}\label{eq:PRP.Adams.3.5-0}
    H_{\al,\be}:=\psi_{\al\be}^{-1}(x_\al,y_\be) \subseteq \BF(\vG_\al\cup\vH_\be).
\end{equation}
By the Mayer-Vietoris axiom, the sets $H_{\al,\be}$ are nonempty. We consider the subcategory $\cC$ of $\textbf{Set}$ whose objects are the sets $H_{\al,\be}$ for $\al \in \La$ and $\be \in \Ga$. For any objects  $H_{\al,\be},H_{\ga,\de}$ in $ \cC$,  the hom-set $\cC(H_{\ga,\de},H_{\al,\be})$ is empty if $\vG_\al\cup\vH_\be \not\subseteq \vG_\ga\cup\vH_\de$, and consists of a single map, the restriction map, if $\vG_\al\cup\vH_\be \subseteq \vG_\ga\cup\vH_\de$. 

To show the surjectivity of the map \eqref{eq:PRP.Adams.3.5}, it suffices to show that the limit $\varprojlim \cC$ is nonempty. By \cite[Corollary~2.8]{Ad}, we need to verify that 
\begin{itemize}
    \item[(i)] every morphism in $\cC$ is onto, and 
    \item[(ii)] the objects in $\overline{\cC}$ fall into countably many equivalence classes. 
\end{itemize}
See \cite[Section~2]{Ad} for the definition of $\overline{\cC}$.

To prove (i) and (ii), we need two key observations.

First of all, we claim that the group $\BF(\vec{C}(f_{\al\be}))$ acts on $H_{\al,\be}$ transitively, for any $\al \in \La$, $\be \in \Ga$. Explicitly, let $f:\vG \coprod \vH \to \vG \cup \vH$ and $g:\vG \cup \vH \to \vec{C}(f)$ be the maps defined in \eqref{eq:LEM.adams.3.2}, and let $f_{\al\be}:\vG_\al \coprod \vH_\be \to \vG_\al \cup \vH_\be$ and $g_{\al\be}:\vG_\al \cup \vH_\be \to \vec{C}(f_{\al\be})$ be the parallel maps. Here, we choose $\vec{C}(f_{\al\be})$ so that they are naturally embedded into $\vec{C}(f)$. The action $\BF(\vec{C}(f_{\al\be})) \times H_{\al,\be} \to H_{\al,\be}$ is defined by 
\begin{equation}\label{eq:PRP.Adams.3.5-1}
    u \cdot w =  g_{\al\be}^\ast(u) + w, \qquad \forall u\in \BF(\vec{C}(f_{\al\be})), w \in H_{\al,\be}.
\end{equation}
 Since $f_{\al\be}^\ast g_{\al\be}^\ast =0$, we have $\psi_{\al\be}(g_{\al\be}^\ast(u)) = 0$, and thus Equation~\eqref{eq:PRP.Adams.3.5-1} indeed defines an action. For transitivity, given any $w_1, w_2\in H_{\al,\be}$, since $w_1-w_2 \in \ker(f_{\al\be}^\ast)$, if follows from Lemma~\ref{LEM.adams.3.2} that there is $u \in \BF(\vec{C}(f_{\al\be}))$ such that $w_1-w_2=g_{\al\be}^*(u)$.

Secondly, let $\vec{S}$ be the part of $\vec{C}(f)$ over $\vG \cap \vH$, i.e.\ the induced subdigraph of $\vec{C}(f)$ whose vertex set is $V_{\vec{S}} = V_{\vG \cap \vH} \cup ( V_{\vG \cap \vH} \coprod  V_{\vG \cap \vH}) \times \{0\} \cup \{ \ast\}$. 
The digraph $\vec{S}$ is embedded into $\vec{C}(f_{\al\be})$ and $\vec{C}(f)$. We claim that the inclusion map 
\begin{equation}\label{eq:PRP.Adams.3.5-intersection}
    \vec{S} \hookrightarrow \vec{C}(f_{\al\be})
\end{equation}
is a homotopy equivalence for any $\al \in \La$, $\be \in \Ga$. Note that $\vG \cap \vH = \vG_\al \cap \vH_\be$ for any $\al \in \La$, $\be \in \Ga$. Thus, it suffices to show that $\vec{S} \hookrightarrow \vec{C}(f)$ is a homotopy equivalence for any digraphs $\vG$ and $\vH$. Let $j:\vec{S} \hookrightarrow \vec{C}(f)$ be the inclusion map, and $r:\vec{C}(f) \to \vec{S}$ be the digraph map  
\begin{equation}\label{eq:PRP.Adams.3.5-2}
    r(x) = \begin{cases}
    x' & \text{ if } x=j(x') \in j(\vec{S}), \\
    \ast & \text{ if } x \notin j(\vec{S}).
\end{cases}
\end{equation}
It is clear that $r \circ j = \id_{\vec{S}}$, and it remains to show that $j \circ r \simeq \id_{\vec{C}(f)}$.  Let $c:\vec{C}(f) \to \vec{C}(f)$ be the constant digraph map with value $\ast \in \vec{C}(f)$, and let $I_2$ be the digraph $0  \leftarrow 1 \leftarrow  2 $.  Define a homotopy $F:\vec{C}(f)  \Box I_2  \to \vec{C}(f)$ by requiring $F(-,0) = \id_{\vec{C}(f)}$, $F(-,2) = j \circ r$,
\begin{equation}\label{eq:PRP.Adams.3.5-homotopy}
    F(x,1) = \begin{cases}
            (x,0) \in V_{\widehat{C}_f(\vG \coprod \vH)}, & \text{ if } x \in (V_{\vG \cup \vH} \setminus V_{\vG \cap \vH}) \subseteq V_{\vG \cup \vH}  \subseteq V_{\vec{C}(f)}, \\
            x  \in V_{\vec{C}(f)}, & \text{ if } x \notin V_{\vG \cup \vH} \setminus V_{\vG \cap \vH}.
            \end{cases}
\end{equation}
This proves the second claim. See Figure~\ref{fig:suspension}. 

\begin{figure}[h!]
\centering
\begin{tikzpicture}[
    myarrow/.style={-{Stealth[length=2mm]}, thick},
    bluearrow/.style={myarrow, blue},
    redarrow/.style={myarrow, red},
blackarrow/.style={myarrow, black},
purplearrow/.style={myarrow, purple},
cyanarrow/.style={myarrow, cyan},
greenarrow/.style={myarrow, green}
]
    \begin{scope}[xshift=0cm]
        \foreach \a in {2,3,4,9,10} {
            \fill (\a,0) circle (1.5pt);
        }
        \foreach \b in {0,1,5,7,8,12} {
            \fill (\b,1) circle (1.5pt);
        }
        \foreach \c in {2,3,4,10,11} {
            \fill (\c,2) circle (1.5pt);
        }
        \foreach \d in {1,2,3,9,10,11} {
            \fill (\d,3) circle (1.5pt);
        }
        \foreach \e in {0,1,2,6,7,8,9,12} {
            \fill (\e,4) circle (1.5pt);
        }
        \foreach \f in {9,10,11} {
            \fill (\f,5) circle (1.5pt);
        }

        \node[above] at (1.5,4.1) {\tiny \textcolor{green}{$\vec G$}};
        \node[above] at (2.5,3.1) {\tiny \textcolor{blue}{$\vec H$}};
        \node[above] at (0,4) {\tiny{$\text{a}$}};
        \node[above] at (1,4) {\tiny{$\text{b}$}};
        \node[above] at (2,4) {\tiny{$\text{c}$}};
        \node[below] at (1,3) {\tiny{$\text{b}$}};
        \node[below] at (2,3) {\tiny{$\text{c}$}};
        \node[below] at (3,3) {\tiny{$\text{d}$}};
                
        \draw[greenarrow] (0,4) -- (1,4);
        \draw[greenarrow] (1,4) -- (2,4);
        \draw[bluearrow] (1,3) -- (2,3);
        \draw[bluearrow] (2,3) -- (3,3);
        \draw[double equal sign distance] (1,3.8) -- (1,3.2);
        \draw[double equal sign distance] (2,3.8) -- (2,3.2);

        \node[above] at (12.2,3.8) {\tiny{$\ast$}};
        \node[above] at (6,4) {\tiny{$\text{a}$}};
        \node[above] at (7,4) {\tiny{$\text{b}$}};
        \node[above] at (8,4) {\tiny{$\text{c}$}};
        \node[above] at (9,4) {\tiny{$\text{d}$}};
        \node[above] at (9,5) {\tiny{$\text{a}$}};
        \node[above] at (10,5) {\tiny{$\text{b}$}};
        \node[above] at (11,5) {\tiny{$\text{c}$}};             \node[below] at (9,3) {\tiny{$\text{b}$}};
        \node[below] at (10,3) {\tiny{$\text{c}$}};
        \node[below] at (11,3) {\tiny{$\text{d}$}};
        \node[above] at (6,4.5) {\tiny $\vec{C}(f) = \image(F(-,0))$};
        \node[above] at (6.5,3.5) {\tiny $\vG \cup \vH$};
        \node[above] at (9.7,5) {\tiny\textcolor{green}{$\vec G$}};
        \node[below] at (9.7,3) {\tiny  \textcolor{blue}{$\vec H$}};

        \draw[greenarrow] (9,5) -- (10,5);
        \draw[greenarrow] (10,5) -- (11,5);       
        \draw[greenarrow] (9,5) to[bend left=3] (12,4);
        \draw[greenarrow] (12,4) to[bend left=3] (9,5); 
        \draw[greenarrow] (10,5) to[bend left=3] (12,4);
        \draw[greenarrow] (12,4) to[bend left=3] (10,5); 
        \draw[greenarrow] (11,5) to[bend left=4] (12,4);
        \draw[greenarrow] (12,4) to[bend left=4] (11,5); 
        
        \draw[bluearrow] (9,3) -- (10,3);
        \draw[bluearrow] (10,3) -- (11,3);
        \draw[bluearrow] (9,3) to[bend left=3] (12,4);
        \draw[bluearrow] (12,4) to[bend left=3] (9,3); 
        \draw[bluearrow] (10,3) to[bend left=3] (12,4);
        \draw[bluearrow] (12,4) to[bend left=3] (10,3); 
        \draw[bluearrow] (11,3) to[bend left=4] (12,4);
        \draw[bluearrow] (12,4) to[bend left=4] (11,3); 

        \draw[blackarrow] (6,4) -- (7,4);
        \draw[blackarrow] (7,4) -- (8,4);
        \draw[blackarrow] (8,4) -- (9,4);
        \draw[blackarrow] (9,5) -- (6,4);
        \draw[blackarrow] (10,5) -- (7,4);
        \draw[blackarrow] (11,5) -- (8,4);
        \draw[blackarrow] (9,3) -- (7,4);
        \draw[blackarrow] (10,3) -- (8,4);
        \draw[blackarrow] (11,3) -- (9,4);        
        
        \draw[redarrow] (9,5) -- (7,4);       
        \draw[redarrow] (8,4) -- (11,3);
        \draw[redarrow] (12,4) to[bend right=15] (7,4);   
        \draw[redarrow] (8,4) to[bend right=15] (12,4);

        \node[above] at (5.2,0.8) {\tiny{$\ast$}};
        \node[above] at (0,1) {\tiny{$\text{b}$}};
        \node[above] at (1,1) {\tiny{$\text{c}$}};
        \node[above] at (2,2) {\tiny{$\text{a}$}};
        \node[above] at (3,2) {\tiny{$\text{b}$}};
        \node[above] at (4,2) {\tiny{$\text{c}$}};             \node[below] at (2,0) {\tiny{$\text{b}$}};
        \node[below] at (3,0) {\tiny{$\text{c}$}};
        \node[below] at (4,0) {\tiny{$\text{d}$}};        \node[above] at (-0.5,1.5) {\tiny $ \image(F(-,1))$};
        \node[above] at (-0.5,0.5) {\tiny $\vG \cap \vH$};
        \node[above] at (2.7,2) {\tiny\textcolor{green}{$\vec G$}};
        \node[below] at (2.7,0) {\tiny  \textcolor{blue}{$\vec H$}};
        
        \draw[greenarrow] (2,2) -- (3,2);
        \draw[greenarrow] (3,2) -- (4,2);       
        \draw[greenarrow] (2,2) to[bend left=3] (5,1);
        \draw[greenarrow] (5,1) to[bend left=3] (2,2); 
        \draw[greenarrow] (3,2) to[bend left=3] (5,1);
        \draw[greenarrow] (5,1) to[bend left=3] (3,2); 
        \draw[greenarrow] (4,2) to[bend left=4] (5,1);
        \draw[greenarrow] (5,1) to[bend left=4] (4,2); 
        
        \draw[bluearrow] (2,0) -- (3,0);
        \draw[bluearrow] (3,0) -- (4,0);
        \draw[bluearrow] (2,0) to[bend left=3] (5,1);
        \draw[bluearrow] (5,1) to[bend left=3] (2,0); 
        \draw[bluearrow] (3,0) to[bend left=3] (5,1);
        \draw[bluearrow] (5,1) to[bend left=3] (3,0); 
        \draw[bluearrow] (4,0) to[bend left=4] (5,1);
        \draw[bluearrow] (5,1) to[bend left=4] (4,0); 

        \draw[blackarrow] (0,1) -- (1,1);
        \draw[blackarrow] (3,2) -- (0,1);
        \draw[blackarrow] (4,2) -- (1,1);
        \draw[blackarrow] (2,0) -- (0,1);
        \draw[blackarrow] (3,0) -- (1,1);
        
        \draw[redarrow] (2,2) -- (0,1);       
        \draw[redarrow] (1,1) -- (4,0);
        \draw[redarrow] (5,1) to[bend right=15] (0,1);   
        \draw[redarrow] (1,1) to[bend right=15] (5,1);  

        \node[above] at (12.2,0.8) {\tiny{$\ast$}};
        \node[above] at (7,1) {\tiny{$\text{b}$}};
        \node[above] at (8,1) {\tiny{$\text{c}$}};
        \node[above] at (10,2) {\tiny{$\text{b}$}};
        \node[above] at (11,2) {\tiny{$\text{c}$}};             \node[below] at (9,0) {\tiny{$\text{b}$}};
        \node[below] at (10,0) {\tiny{$\text{c}$}};
        \node[above] at (6.5,1.5) {\tiny $\vec{S} = \image(F(-,2))$};
        \node[above] at (6.5,0.5) {\tiny $\vG \cap \vH$};
        \node[above] at (10.5,2.1) {\tiny\textcolor{green}{$\vG \cap \vH$}};
        \node[below] at (9.5,-0.1) {\tiny  \textcolor{blue}{$\vG \cap \vH$}};        
        
        \draw[greenarrow] (10,2) -- (11,2);       
        \draw[greenarrow] (10,2) to[bend left=3] (12,1);
        \draw[greenarrow] (12,1) to[bend left=3] (10,2); 
        \draw[greenarrow] (11,2) to[bend left=4] (12,1);
        \draw[greenarrow] (12,1) to[bend left=4] (11,2); 
        
        \draw[bluearrow] (9,0) -- (10,0);
        \draw[bluearrow] (9,0) to[bend left=3] (12,1);
        \draw[bluearrow] (12,1) to[bend left=3] (9,0); 
        \draw[bluearrow] (10,0) to[bend left=3] (12,1);
        \draw[bluearrow] (12,1) to[bend left=3] (10,0);

        \draw[blackarrow] (7,1) -- (8,1);
        \draw[blackarrow] (10,2) -- (7,1);
        \draw[blackarrow] (11,2) -- (8,1);
        \draw[blackarrow] (9,0) -- (7,1);
        \draw[blackarrow] (10,0) -- (8,1);
        
        \draw[redarrow] (12,1) to[bend right=15] (7,1);   
        \draw[redarrow] (8,1) to[bend right=15] (12,1);      
    \end{scope}

\end{tikzpicture}
        \caption{Homotopy between $\id_{\vec{C}(f)}$ and $j \circ r$}
        \label{fig:suspension}
\end{figure}
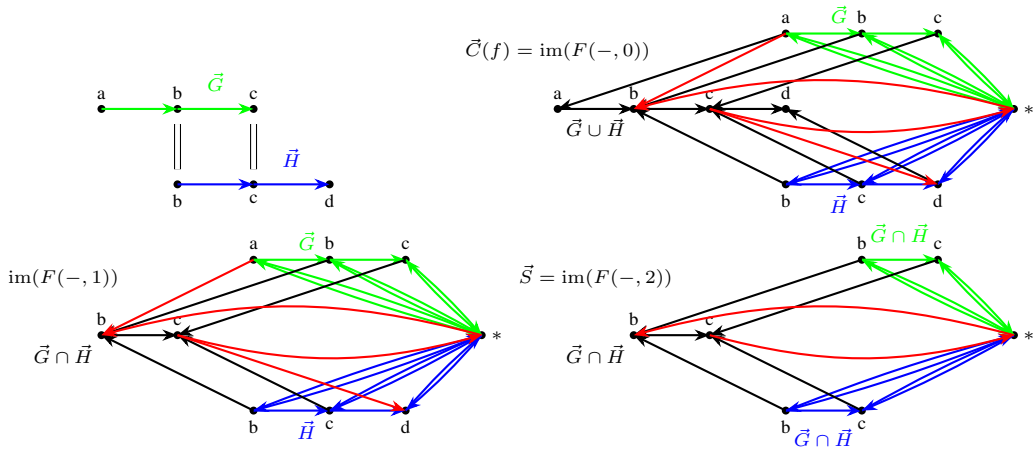

\emph{Proof of (i).}
It follows from these two claims that $\BF(\vec{S})$ acts on $H_{\al,\be}$ transitively for any $\al \in \La$, $\be \in \Ga$:
\begin{equation}\label{eq:PRP.Adams.3.5-3}
    u \cdot w =  g_{\al\be}^\ast r_{\al\be}^\ast(u) + w, \qquad \forall u\in \BF(\vec{S}), w \in H_{\al,\be},
\end{equation}
where $r_{\al\be}:\vec{C}(f_{\al\be}) \to \vec{S}$ is defined by the formula \eqref{eq:PRP.Adams.3.5-2}. 
Since both $g_{\al\be}$ and $r_{\al\be}$ commute with the embeddings $\vG_\al \cup \vH_\be \hookrightarrow \vG_\ga \cup \vH_\de$ and $\vec{C}(f_{\al\be}) \hookrightarrow \vec{C}(f_{\ga\de})$, the restriction map $\imath^*\colon H_{\ga,\de}\ra H_{\al,\be}$ commutes with the action:
$$
\imath^\ast(u \cdot w) = u \cdot \imath^\ast(w), \qquad \forall u\in \BF(\vec{S}), w \in H_{\ga,\de}.
$$
Let $w_0 \in H_{\ga,\de}$. By transitivity, given any element  $w \in H_{\al,\be}$, there exists $u\in \BF(\vec{S})$ such that $w = u \cdot \imath^\ast(w_0)$, and thus 
$$
\imath^\ast( u \cdot w_0) = w.
$$
This proves that the restriction map $\imath^*\colon H_{\ga,\de}\ra H_{\al,\be}$ is surjective.

\emph{Proof of (ii).}
To prove (ii), we need a description of the hom-set $\overline{\cC}(H_{\ga,\de},H_{\al,\be})$. Let $h:\vec{C}(f) \to \vec{C}(g)$ be the map defined in \eqref{eq:LEM.adams.3.2}, and let $h_{\al\be}:\vec{C}(f_{\al\be}) \to \vec{C}(g_{\al\be})$ be the parallel map. Combining $h_{\al\be}$ with the homotopy equivalence $j_{\al\be}:\vec{S} \hookrightarrow \vec{C}(f_{\al\be})$, we have the map
$$
 j_{\al\be}^\ast h_{\al\be}^\ast: \BF(\vec{C}(g_{\al\be})) \to \BF(\vec{S}) \cong \BF(\vec{C}(f_{\al\be})).
$$
We claim that the hom-set $\overline{\cC}(H_{\ga,\de},H_{\al,\be})$ is nonempty if and only if $ j_{\ga\de}^\ast h_{\ga\de}^\ast\big(\BF(\vec{C}(g_{\ga\de}))\big) \subseteq  j_{\al\be}^\ast h_{\al\be}^\ast\big(\BF(\vec{C}(g_{\al\be}))\big)$.

Recall that $\f k \in \overline{\cC}(H_{\ga,\de},H_{\al,\be})$ if and only if there exists $H_{\te,\phi}$ such that the diagram
\begin{equation}\label{eq:PRP.Adams.3.5-4}
\begin{tikzcd}
 & \ar[ld,"\imath_{\ga\de}^\ast"']  H_{\te,\phi} \ar[rd,"\imath_{\al\be}^\ast"] & \\
 H_{\ga,\de} \ar[rr,"\f k"']   & &  H_{\al,\be}
\end{tikzcd}
\end{equation}
commutes, where $\imath_{\al\be}: \vG_\al \cup \vH_\be \hookrightarrow \vG_\theta \cup \vH_\phi$ and $\imath_{\ga\de}: \vG_\ga \cup \vH_\de \hookrightarrow \vG_\theta \cup \vH_\phi$ are the inclusion maps. Consider the commutative diagram 
\begin{equation}\label{eq:PRP.Adams.3.5-5}
    \begin{tikzcd}
       \BF(\vG_{\al}) \times \BF(\vH_{\be}) & \ar[l,"f_{\al\be}^\ast"'] \BF(\vG_{\al} \cup \vH_{\be}) & \ar[l,"g_{\al\be}^\ast r_{\al\be}^\ast"'] \BF(\vec{S}) & \ar[l,"j_{\al\be}^\ast h_{\al\be}^\ast"']\BF(\vec{C}(g_{\al\be})) \\
       \BF(\vG_{\te}) \times \BF(\vH_{\phi}) \ar[u] \ar[d] & \ar[l,"f_{\te\phi}^\ast"'] \BF(\vG_{\te} \cup \vH_{\phi}) \ar[u,"\imath_{\al\be}^\ast"] \ar[d,"\imath_{\ga\de}^\ast"'] & \ar[l,"g_{\te\phi}^\ast r_{\te\phi}^\ast"'] \BF(\vec{S}) \ar[u,equal] \ar[d,equal] & \ar[l,"j_{\te\phi}^\ast h_{\te\phi}^\ast"']\BF(\vec{C}(g_{\te\phi})) \ar[u] \ar[d] \\
       \BF(\vG_{\ga}) \times \BF(\vH_{\de}) & \ar[l,"f_{\ga\de}^\ast"'] \BF(\vG_{\ga} \cup \vH_{\de}) & \ar[l,"g_{\ga\de}^\ast r_{\ga\de}^\ast"'] \BF(\vec{S}) & \ar[l,"j_{\ga\de}^\ast h_{\ga\de}^\ast"']\BF(\vec{C}(g_{\ga\de})), 
    \end{tikzcd}
\end{equation}
where the retractions $r_{\al\be}: \vec{C}(f_{\al\be}) \to \vec{S}$ are defined by the formula \eqref{eq:PRP.Adams.3.5-2}, $j_{\al\be}: \vec{S} \hookrightarrow \vec{C}(f_{\al\be})$ are the inclusion maps, and the vertical arrows are induced by the inclusion maps.

Now assume that there exists $\f k \in \overline{\cC}(H_{\ga,\de},H_{\al,\be})$. If $ j_{\ga\de}^\ast h_{\ga\de}^\ast\big(\BF(\vec{C}(g_{\ga\de}))\big) \not\subseteq  j_{\al\be}^\ast h_{\al\be}^\ast\big(\BF(\vec{C}(g_{\al\be}))\big)$, then, by Lemma~\ref{LEM.adams.3.2}, there exists $u$ such that $g_{\ga\de}^\ast r_{\ga\de}^\ast(u) =0$ but $g_{\al\be}^\ast r_{\al\be}^\ast(u) \neq 0$. This implies that 
$$
\f k(0) =\f k(g_{\ga\de}^\ast r_{\ga\de}^\ast(u)) =\f k(\imath_{\ga\de}^\ast g_{\te\phi}^\ast r_{\te\phi}^\ast(u)) =\imath_{\al\be}^\ast g_{\te\phi}^\ast r_{\te\phi}^\ast(u))  = g_{\al\be}^\ast r_{\al\be}^\ast(u)  \neq  0,
$$
which is a contradiction since 
$$
\f k(0) = \f k (\imath_{\ga\de}^\ast(0)) = \imath_{\al\be}^\ast(0) =0.
$$
Therefore, we conclude that $ j_{\ga\de}^\ast h_{\ga\de}^\ast\big(\BF(\vec{C}(g_{\ga\de}))\big) \subseteq  j_{\al\be}^\ast h_{\al\be}^\ast\big(\BF(\vec{C}(g_{\al\be}))\big)$.

Conversely, assume that $ j_{\ga\de}^\ast h_{\ga\de}^\ast\big(\BF(\vec{C}(g_{\ga\de}))\big) \subseteq  j_{\al\be}^\ast h_{\al\be}^\ast\big(\BF(\vec{C}(g_{\al\be}))\big)$. By Lemma~\ref{LEM.adams.3.2}, it is equivalent to that $\ker(g_{\ga\de}^\ast r_{\ga\de}^\ast) \subseteq \ker(g_{\al\be}^\ast r_{\al\be}^\ast)$. Let $w_0$ be a fixed element in $H_{\theta,\phi}$. By the transitivity of the action \eqref{eq:PRP.Adams.3.5-3}, each element in $H_{\ga,\de}$ is of the form $u \cdot \iota_{\ga\de}^\ast(w_0)$, $u \in \BF(\vec{S})$. Define $\f k:H_{\ga,\de} \to H_{\al,\be}$ by
$$
\f k( u \cdot \iota_{\ga\de}^\ast(w_0)) = u \cdot \iota_{\al\be}^\ast(w_0).
$$
The assumption that $\ker(g_{\ga\de}^\ast r_{\ga\de}^\ast) \subseteq \ker(g_{\al\be}^\ast r_{\al\be}^\ast)$ guarantees the well-definedness of $\f k$, and the commutativity of the diagram \eqref{eq:PRP.Adams.3.5-4} follows from the fact that the restrictions commute with the action \eqref{eq:PRP.Adams.3.5-3}. This completes the proof of our claim.

The claim implies that two objects $H_{\al,\be}$ and $H_{\ga,\de}$ are equivalent in $\overline{\cC}$ if and only if the equality $ j_{\ga\de}^\ast h_{\ga\de}^\ast\big(\BF(\vec{C}(g_{\ga\de}))\big) = j_{\al\be}^\ast h_{\al\be}^\ast\big(\BF(\vec{C}(g_{\al\be}))\big)$ holds in $\BF(\vec{S})$. Note that since $\vG \cap \vH$ is finite, the digraph $\vec{S}$ is also finite. Thus, both $j_{\ga\de}^\ast h_{\ga\de}^\ast$ and $j_{\al\be}^\ast h_{\al\be}^\ast$ are induced by maps of \emph{finite} digraphs. Since there are only countably many homotopy classes of maps from finite digraphs to the finite digraph $\vec{S}$, the possible images of the induced maps are also countably many. Therefore, there are only countably many equivalence classes of objects in $\overline{\cC}$. This completes the proof of (ii) and hence the proof of Proposition~\ref{PRP.Adams.3.5}.
\end{proof}

\section{Brown's Method for Directed Graphs}\label{SEC.section.4}
In this section, we construct a classifying object representing a Brown functor on finite digraphs. The arguments in this section are parallel to those in Brown~\cite{B62} and Adams~\cite{Ad}. Notice that our main contribution is the construction of mapping tubes in Definition~\ref{DFN.mapping.tube}, which is necessary for Lemma~\ref{induct}. 

Throughout this section we will use the notation $\Nat(F,G)$ to denote the set of all natural transformations from a functor $F$ to a functor $G$.

Similar to \cite[Section~4]{Ad}, we need another description of the extended Brown functor $\EBF$ via the Yoneda lemma: 
Let $\BF \colon \text{Ho}\mathcal{D}^{\text{op}}_0\to\textbf{Set}$ be a functor. To extend $\BF$ to infinite digraphs, consider $\vec{Y}\in\cD$ and its associated functor $[-,\vec{Y}]:\text{Ho}\mathcal{D}^{\text{op}}_0\to\textbf{Set}$ which sends a finite digraph $\vG$ to the set $[\vG,\vec{Y}]$ of homotopy classes of digraph maps from $\vG$ to $\vec{Y}$. 
Define 
\begin{equation}\label{eq:ExtendedFunctorByYoneda}
    \bar{\BF}(\vec{Y}):=\Nat([-,\vec{Y}],\BF(-)).
\end{equation}
If $\vec{Y}$ is finite, then by the Yoneda lemma, we have the isomorphism
$$
  \BF(\vec{Y}) \xrightarrow{\cong} \bar\BF(\vec{Y}), \quad  y \mapsto T_y,
$$
where $T_y$ is the natural transformation given by 
\begin{equation}\label{eq:NatTransHclass}
    T_{y,\vG}: [\vG,\vec{Y}] \to \BF(\vG), \quad [f] \mapsto f^\ast(y), 
\end{equation}
for any finite digraph $\vG$.

The following lemma can be easily verified, as in \cite[Lemma~4.1]{Ad} (or \cite[Lemma~3.3]{B62}).

    \begin{lem}
    \label{LEM.Brown.Lemma.3.3}
    There is an isomorphism between sets:
    $$
    \phi\colon \bar{\BF}(\vec{Y})\to \EBF(\vec{Y}) = \varprojlim_\al \BF(\vec{Y}_\alpha), \quad T \mapsto \Big(T_{\vec{Y}_\alpha}([\f i_\al]) \Big)_\al,
    $$
    where the limit is taken over all the finite subdigraphs $\vec{Y}_\alpha$ of $\vec{Y}$, $T \in \Nat([-,\vec{Y}],\BF(-))$, and $\f i_\al: \vec{Y}_\al \hookrightarrow \vec{Y}$ are the inclusion maps. 
    \end{lem}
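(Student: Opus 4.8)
The statement to prove is Lemma~\ref{LEM.Brown.Lemma.3.3}: the map $\phi\colon \bar{\BF}(\vec{Y})\to \EBF(\vec{Y})$ sending a natural transformation $T$ to the tuple $\big(T_{\vec{Y}_\alpha}([\f i_\al])\big)_\al$ is a bijection. The plan is to proceed in three stages: check that $\phi$ is well-defined (lands in the inverse limit), construct an explicit inverse, and verify the two compositions are identities. The whole argument is the Yoneda lemma packaged relative to the filtered system of finite subdigraphs of $\vec{Y}$, together with the observation in \eqref{eq:ExtendedBF} that $\EBF(\vec{Y})$ is exactly the set of compatible families $(x_\alpha)_\alpha$ with $x_\alpha \in \BF(\vec{Y}_\alpha)$.

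First, \textbf{well-definedness}. Given $T \in \Nat([-,\vec{Y}],\BF(-))$, I must check that the family $x_\alpha := T_{\vec{Y}_\alpha}([\f i_\al])$ satisfies the compatibility condition $\f i_{\alpha\beta}^\ast(x_\beta) = x_\alpha$ whenever $\vec{Y}_\alpha \subseteq \vec{Y}_\beta$. This is immediate from naturality of $T$ applied to the digraph map $\f i_{\alpha\beta}\colon \vec{Y}_\alpha \hookrightarrow \vec{Y}_\beta$: the naturality square for $\f i_{\alpha\beta}$ gives $\f i_{\alpha\beta}^\ast \circ T_{\vec{Y}_\beta} = T_{\vec{Y}_\alpha} \circ (\f i_{\alpha\beta})_\ast$ on $[-,\vec{Y}]$, and $(\f i_{\alpha\beta})_\ast([\f i_\beta]) = [\f i_\beta \circ \f i_{\alpha\beta}] = [\f i_\alpha]$ since the composite inclusion $\vec{Y}_\alpha \hookrightarrow \vec{Y}_\beta \hookrightarrow \vec{Y}$ equals $\f i_\alpha$. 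Hence $\phi(T) \in \EBF(\vec{Y})$.

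Second, the \textbf{inverse}. Given a compatible family $x = (x_\alpha)_\alpha \in \EBF(\vec{Y})$, I must produce a natural transformation $\psi(x) = T \in \Nat([-,\vec{Y}],\BF(-))$. For a finite digraph $\vG$ and a homotopy class $[f] \in [\vG,\vec{Y}]$, the image $f(\vG)$ is a finite subdigraph of $\vec{Y}$, say $\vec{Y}_{\alpha(f)}$, and $f$ factors as $\vG \xrightarrow{\bar f} \vec{Y}_{\alpha(f)} \xrightarrow{\f i_{\alpha(f)}} \vec{Y}$; define $T_\vG([f]) := \bar f^\ast(x_{\alpha(f)}) \in \BF(\vG)$. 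One checks this is independent of the choice of factoring finite subdigraph (any two are both contained in a common larger one, and the compatibility condition $\f i_{\alpha\beta}^\ast(x_\beta) = x_\alpha$ together with functoriality of $\BF$ forces agreement) and independent of the representative $f$ of $[f]$ (since $\BF$ is defined on the homotopy category $\hD_0$, a homotopy $F\colon \vG \Box I_n \to \vec{Y}$ likewise has finite image and $\bar f$, $\bar g$ become homotopic into a common finite subdigraph). Naturality in $\vG$ follows from functoriality of $\BF$ applied to the factorizations. This is the step where the bookkeeping is heaviest, but each sub-claim is a direct diagram chase using only the identification \eqref{eq:ExtendedBF} and functoriality; there is no real obstacle, only care.

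Third, \textbf{the compositions are identities}. For $\phi \circ \psi$: starting from $x = (x_\alpha)_\alpha$, the component of $\phi(\psi(x))$ at $\vec{Y}_\alpha$ is $T_{\vec{Y}_\alpha}([\f i_\alpha]) = \overline{\f i_\alpha}^\ast(x_{\alpha'})$ where $\vec{Y}_{\alpha'} = \f i_\alpha(\vec{Y}_\alpha) = \vec{Y}_\alpha$ and $\overline{\f i_\alpha} = \id$, so this is just $x_\alpha$; hence $\phi \circ \psi = \id$. For $\psi \circ \phi$: given $T$, set $x_\alpha = T_{\vec{Y}_\alpha}([\f i_\alpha])$; for any finite $\vG$ and $[f]\in[\vG,\vec{Y}]$ with factorization $f = \f i_{\alpha(f)} \circ \bar f$, naturality of $T$ along $\bar f\colon \vG \to \vec{Y}_{\alpha(f)}$ gives $T_\vG([f]) = T_\vG((\bar f)_\ast [\f i_{\alpha(f)}]) = \bar f^\ast(T_{\vec{Y}_{\alpha(f)}}([\f i_{\alpha(f)}])) = \bar f^\ast(x_{\alpha(f)}) = (\psi(\phi(T)))_\vG([f])$, so $\psi \circ \phi = \id$. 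Finally, when $\vec{Y}$ is itself finite, $\vec{Y}$ is cofinal among its finite subdigraphs, the inverse limit collapses to $\BF(\vec{Y})$, and $\phi$ recovers precisely the Yoneda isomorphism $y \mapsto T_y$ of \eqref{eq:NatTransHclass}, which is consistent with the identification stated just before the lemma. This completes the proof.
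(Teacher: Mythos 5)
Your proof is correct and is exactly the argument the paper intends: the paper gives no details, citing Adams' Lemma~4.1 and Brown's Lemma~3.3, and your construction of the inverse by factoring any map from a finite digraph (and any homotopy, since $\vG \Box I_n$ is finite) through its finite image, plus the compatibility condition of \eqref{eq:ExtendedBF}, is that standard argument. No gaps; the routine checks you defer (independence of the factoring subdigraph and of the representative, naturality of $\psi(x)$) are indeed direct diagram chases.
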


    Let $\BF$ be a Brown functor which induces $\EBF:\text{wHo}\cD^{\text{op}}\ra \textbf{Set}$ by Equation~\eqref{eq:ExtendedBF-0}. By the Yoneda lemma, for each digraph $\vec{Y} \in \cD$, we have 
    $$
    \EBF(\vec{Y}) \cong \Nat([-,\vec{Y}]_w, \EBF(-)).
    $$
    Combining with Lemma~\ref{LEM.Brown.Lemma.3.3}, we have
    \begin{equation}
        \Nat([-,\vec{Y}]_w,\EBF(-))\cong \EBF(\vec{Y})\cong \Nat([-,\vec{Y}],\BF(-)),
    \end{equation}
    where $[-,\vec{Y}]_w$ and $\EBF(-)$ are functors from $\text{wHo}\cD^{\text{op}}$ to $\textbf{Set}$, and $[-,\vec{Y}]$ and $\BF(-)$ are functors from $\text{Ho}\cD_0^{\text{op}}$ to $\textbf{Set}$.

    Recall that we constructed mapping tubes $ \overrightarrow{MT}_{f,g}$ in Definitions~\ref{DFN.mapping.tube}.  We will need the following properties of mapping tubes.

    \begin{lem} \label{MTl}
    Let $f,g\colon \vec{G}\to \vec{H}$ be digraph maps, and $i\colon \vec{H}\to \overrightarrow{MT}_{f,g}$ be the natural embedding. Then $i\circ f\simeq i\circ g$.
    \end{lem}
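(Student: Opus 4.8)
The plan is to exhibit an explicit homotopy realized by the very quotient map used to define the mapping tube. Recall that $I_3$ (the digraph $0\leftarrow 1\rightarrow 2\rightarrow 3$) is a $3$-step line digraph, and that by Definition~\ref{DFN.mapping.tube} we have a canonical quotient digraph map
$$
q\colon \big((\vG\Box I_3)\coprod\vH\big)\longrightarrow \overrightarrow{MT}_{f,g},
$$
which is a digraph map because quotient maps of digraphs are digraph maps. First I would define $F\colon \vG\Box I_3\to\overrightarrow{MT}_{f,g}$ to be the restriction of $q$ to the subdigraph $\vG\Box I_3\subseteq(\vG\Box I_3)\coprod\vH$; being the restriction of a digraph map to a subdigraph, $F$ is again a digraph map, and for each vertex $j\in V_{I_3}$ the slice $F|_{\vG\times\{j\}}$ is automatically a digraph map $\vG\to\overrightarrow{MT}_{f,g}$.

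Next I would identify the two end slices of $F$ with the maps in question. Using the defining identifications $(x,0)\sim f(x)$ and $(x,3)\sim g(x)$ of $\overrightarrow{MT}_{f,g}$, for every $x\in V_{\vG}$ we get
$$
F(x,0)=[(x,0)]=[f(x)]=i(f(x)),\qquad F(x,3)=[(x,3)]=[g(x)]=i(g(x)),
$$
so that $F|_{\vG\times\{0\}}=i\circ f$ and $F|_{\vG\times\{3\}}=i\circ g$. Since $I_3$ is a line digraph with $3\geq 1$ steps, $F$ is exactly a homotopy from $i\circ f$ to $i\circ g$, and therefore $i\circ f\simeq i\circ g$.

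I do not expect any genuine obstacle here: the only points needing (routine) care are verifying that the quotient digraph map restricts to a digraph map on the slab $\vG\Box I_3$ — which is immediate from the definition of a quotient digraph — and bookkeeping the identifications so that the $0$- and $3$-slices match $i\circ f$ and $i\circ g$ on the nose. It is perhaps worth noting explicitly in the write-up that the intermediate slices $\vG\Box\{1\}$ and $\vG\Box\{2\}$ play no role beyond providing the interpolation inside $I_3$; this is precisely why the three-step line digraph $I_3$ (rather than $I^+$ or $I^-$) is built into the definition of the mapping tube, since the two maps $f$ and $g$ are glued to opposite ends of $\vH$ and one needs enough room to pass between them while keeping the gluing edges consistently oriented.
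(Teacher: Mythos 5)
Your proof is correct and is essentially the paper's own argument: the paper defines $F$ as the composite $\vG\Box I_3\hookrightarrow(\vG\Box I_3)\coprod\vH\twoheadrightarrow\overrightarrow{MT}_{f,g}$, which is exactly your restriction of the quotient map, and concludes that the end slices are $i\circ f$ and $i\circ g$ via the defining identifications. Your write-up just spells out the slice bookkeeping that the paper leaves as ``clear.''
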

    \begin{proof}
        Let $I_3$ be the digraph $0 \leftarrow 1 \rightarrow 2  \rightarrow 3$, and $F: \vG \Box I_3 \to \overrightarrow{MT}_{f,g}$ be the composition of digraph maps 
        $$
        \begin{tikzcd}
         \vG \Box I_3 \ar[r,"\id"] &  \vG \Box I_3 \ar[r,hook] & (\vG \Box I_3)  \coprod \vH \ar[r,two heads] &  \overrightarrow{MT}_{f,g},
        \end{tikzcd}
        $$
        where the rightmost map is the quotient map. 
        It is clear that $F$ defines a homotopy between $i \circ f$ and $i \circ g$, which proves the lemma.
    \end{proof}

Let $i\colon \vec{H}\to \overrightarrow{MT}_{f,g}$ be the natural embedding, and $j: \vG \to \overrightarrow{MT}_{f,g}$ be the embedding $j(g) = (g,1)$, $g \in V_{\vG}$. Note that there is another embedding $j': \vG \to \overrightarrow{MT}_{f,g}$, $j'(g) = (g,2)$, which is homotopic to $j$.

    \begin{lem} \label{MT2}
    Let $\vG$ be a finite digraph, and $f,g\colon \vec{G}\to \vec{H}$ be digraph maps. Suppose that $\BF$ is a Brown functor. The map 
    \begin{equation}\label{eq:MT2}
        \EBF(\overrightarrow{MT}_{f,g}) \to \EBF(\vG) \times_{\EBF(\vG \coprod \vG)} \EBF(\vH), \quad x \mapsto (j^\ast x, i^\ast x),
    \end{equation}
    is surjective. Here the fiber product is over the diagonal map $\EBF(\vG) \to \EBF(\vG) \oplus \EBF(\vG) = \EBF(\vG \coprod \vG)$ and the map $\EBF(\vH) \to  \EBF(\vG) \oplus \EBF(\vG), x \mapsto (f^\ast x, g^\ast x)$.
    \end{lem}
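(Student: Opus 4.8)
The plan is to deduce the statement from Proposition~\ref{PRP.Adams.3.5} applied to the decomposition of the mapping tube recorded in~\eqref{eq:rmk:MappingTube}: $\overrightarrow{MT}_{f,g}=\vec{M}_{f\amalg g}\cup(\vG\Box I^+)$, whose two pieces meet in $\vec{M}_{f\amalg g}\cap(\vG\Box I^+)=\vG\coprod\vG$. Since $\vG$ is finite, $\vG\coprod\vG\in\cD_0$, so Proposition~\ref{PRP.Adams.3.5}, with $\vec{M}_{f\amalg g}$ and $\vG\Box I^+$ in the roles of the two subdigraphs there, gives that the Mayer--Vietoris map
\[
\EBF(\overrightarrow{MT}_{f,g})\longrightarrow \EBF(\vec{M}_{f\amalg g})\times_{\EBF(\vG\coprod\vG)}\EBF(\vG\Box I^+),\qquad x\longmapsto\bigl(x|_{\vec{M}_{f\amalg g}},\,x|_{\vG\Box I^+}\bigr),
\]
is surjective. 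What remains is to identify this map, together with its target, with the one in~\eqref{eq:MT2}.

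I would first fix the relevant homotopy equivalences. The mapping cylinder $\vec{M}_{f\amalg g}$ deformation retracts onto $\vH$ — by the evident analogue of the homotopy in Example~\ref{EXA.mapping.cylinder.crushing}, the ``modification'' there playing no role — so $\vH\hookrightarrow\vec{M}_{f\amalg g}$ is a homotopy equivalence; and since the line digraph $I^+$ is contractible, $\vG\Box I^+$ deformation retracts onto the slice $\vG\Box\{1\}$, so that inclusion is a homotopy equivalence. As $\EBF$ is a homotopy functor, these yield isomorphisms $\EBF(\vec{M}_{f\amalg g})\cong\EBF(\vH)$ and $\EBF(\vG\Box I^+)\cong\EBF(\vG)$; and by Lemma~\ref{lem:EBF-additivity}, $\EBF(\vG\coprod\vG)\cong\EBF(\vG)\oplus\EBF(\vG)$, the two components being restriction to the two copies of $\vG$.

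Next I would trace the two legs of the fibre product through these identifications. Inside $\overrightarrow{MT}_{f,g}$ the two free slices of $\vec{M}_{f\amalg g}$ are $\vG\Box\{1\}$ and $\vG\Box\{2\}$, and under the cylinder deformation retraction they go to $f(\vG)$ by $f$ and to $g(\vG)$ by $g$, respectively; hence the composite $\EBF(\vH)\cong\EBF(\vec{M}_{f\amalg g})\to\EBF(\vG\coprod\vG)\cong\EBF(\vG)\oplus\EBF(\vG)$ is $b\mapsto(f^\ast b,g^\ast b)$. On the other side, after retracting $\vG\Box I^+$ onto $\vG\Box\{1\}$ both $\vG\Box\{1\}$ and $\vG\Box\{2\}$ become the identity of $\vG$, so the composite $\EBF(\vG)\cong\EBF(\vG\Box I^+)\to\EBF(\vG)\oplus\EBF(\vG)$ is the diagonal $a\mapsto(a,a)$. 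Thus the target above is identified with $\EBF(\vH)\times_{\EBF(\vG)\oplus\EBF(\vG)}\EBF(\vG)$ formed over $b\mapsto(f^\ast b,g^\ast b)$ and over the diagonal, which is, up to the order of the two factors, the target of~\eqref{eq:MT2}. Finally, $i\colon\vH\hookrightarrow\overrightarrow{MT}_{f,g}$ factors as $\vH\hookrightarrow\vec{M}_{f\amalg g}\hookrightarrow\overrightarrow{MT}_{f,g}$ and $j\colon\vG\hookrightarrow\overrightarrow{MT}_{f,g}$ factors as $\vG\Box\{1\}\hookrightarrow\vG\Box I^+\hookrightarrow\overrightarrow{MT}_{f,g}$ (recall that $j$ sends $\vG$ onto the slice $\vG\Box\{1\}$), so the identifications above carry $x|_{\vec{M}_{f\amalg g}}$ to $i^\ast x$ and $x|_{\vG\Box I^+}$ to $j^\ast x$. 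Hence the Mayer--Vietoris map becomes $x\mapsto(j^\ast x,i^\ast x)$, and its surjectivity is exactly Proposition~\ref{PRP.Adams.3.5}, which proves the lemma.

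I expect the only real obstacle to be this bookkeeping: one must keep the orientations of $I_3=0\leftarrow1\rightarrow2\rightarrow3$ straight to see that inside $\overrightarrow{MT}_{f,g}$ the subdigraph $\vec{M}_{f\amalg g}$ is glued so that its two free copies of $\vG$ are precisely the slices $\vG\Box\{1\}$ and $\vG\Box\{2\}$, that these retract onto $\vH$ via $f$ and $g$ respectively, and that $\vG\Box I^+$ is precisely the slice $\vG\Box\{1,2\}$ carrying the edge $1\to2$. None of this is deep, but it is where a slip would most easily occur.
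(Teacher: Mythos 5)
Your proposal is correct and follows essentially the same route as the paper: decompose $\overrightarrow{MT}_{f,g}$ as $\vec{M}_{f\amalg g}\cup(\vG\Box I^+)$ with intersection $\vG\coprod\vG$, apply Proposition~\ref{PRP.Adams.3.5}, and transport the result through the homotopy equivalences $\vH\simeq\vec{M}_{f\amalg g}$ and $\vG\simeq\vG\Box I^+$. The extra bookkeeping you carry out (identifying the two legs of the fibre product with the diagonal and with $(f^\ast,g^\ast)$) is exactly the step the paper leaves implicit.
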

    \begin{proof}
        Recall that we have the decomposition \eqref{eq:rmk:MappingTube} of a mapping tube $\overrightarrow{MT}_{f,g}$. Thus, by Proposition~\ref{PRP.Adams.3.5}, the map 
        $$
        \EBF(\overrightarrow{MT}_{f,g}) \to \EBF(\vG\Box I^+) \times_{\EBF(\vG \coprod \vG)} \EBF(\vec{M}_{f \amalg g}),
        $$
        induced by the inclusions, is onto. Since $\vG$ is homotopy equivalent to $\vG \Box I^+$, and $\vH$ is homotopy equivalent to $\vec{M}_{f \amalg g}$, we conclude that the map \eqref{eq:MT2} is also onto. This proves the lemma.
    \end{proof}
    
    
The following lemma is similar to \cite[Lemma~4.2]{Ad}. 

    \begin{lem}\label{induct}
    Let $\BF$ be a Brown functor. 
    Suppose $\vec{Y}_n$ is a digraph equipped with an element $y_n\in\EBF(\vec{Y}_n)$. Then there exist a digraph $\vec{Y}_{n+1}$, an embedding $i\colon \vec{Y}_n\to \vec{Y}_{n+1}$ and an element $y_{n+1}\in\EBF(\vec{Y}_{n+1})$ such that the following conditions hold:
    \begin{itemize}
        \item[(i)]
         $i^\ast(y_{n+1}) = y_n$;
        \item[(ii)]
        for any pair of digraph maps $f,g\colon \vec{K}\to \vec{Y}_n$, where $\vec{K}$ is a finite digraph, if $f^\ast y_n=g^\ast y_n$, then $i\circ f\simeq i\circ g: \vec{K} \to \vec{Y}_{n+1}$. 
    \end{itemize}
    \end{lem}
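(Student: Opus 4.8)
The plan is to build $\vec{Y}_{n+1}$ from $\vec{Y}_n$ by attaching one mapping tube for every potential failure of (ii), and then to produce $y_{n+1}$ by extending $y_n$ across the attached tubes one at a time. Concretely, I would fix a set $J$ of representatives of all triples $(\vec{K}_j,f_j,g_j)$ with $\vec{K}_j$ a finite digraph and $f_j,g_j\colon\vec{K}_j\to\vec{Y}_n$ digraph maps satisfying $f_j^\ast y_n=g_j^\ast y_n$ in $\EBF(\vec{K}_j)=\BF(\vec{K}_j)$, and, with $I_3=0\leftarrow 1\rightarrow 2\rightarrow 3$ as in Definition~\ref{DFN.mapping.tube}, set
\[
\vec{Y}_{n+1} := \Big(\vec{Y}_n \ \coprod\ \coprod_{j\in J}(\vec{K}_j\Box I_3)\Big)\Big/\sim,
\]
where $(x,0)\sim f_j(x)$ and $(x,3)\sim g_j(x)$ for $x\in V_{\vec{K}_j}$; this glues each mapping tube $\overrightarrow{MT}_{f_j,g_j}$ to $\vec{Y}_n$ along its copy of $\vec{H}=\vec{Y}_n$. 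Write $i\colon\vec{Y}_n\hookrightarrow\vec{Y}_{n+1}$ for the evident embedding and $\mu_j\colon\overrightarrow{MT}_{f_j,g_j}\hookrightarrow\vec{Y}_{n+1}$ for the embedding agreeing with $i$ on $\vec{Y}_n$. Condition (ii) is then immediate: given finite $\vec{K}$ and $f,g\colon\vec{K}\to\vec{Y}_n$ with $f^\ast y_n=g^\ast y_n$, an isomorphism of $\vec{K}$ onto a representative turns $(f,g)$ into some $(f_j,g_j)\in J$, Lemma~\ref{MTl} gives $\iota_j\circ f_j\simeq\iota_j\circ g_j$ for the natural embedding $\iota_j\colon\vec{Y}_n\to\overrightarrow{MT}_{f_j,g_j}$, and composing with $\mu_j$ (using $\mu_j\circ\iota_j=i$ and that homotopy is preserved under precomposition) yields $i\circ f\simeq i\circ g$.

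The real work is producing $y_{n+1}$. First, for each $j$ I would apply Lemma~\ref{MT2} with $\vec{G}=\vec{K}_j$ and $\vec{H}=\vec{Y}_n$: since $f_j^\ast y_n=g_j^\ast y_n$, the pair $(f_j^\ast y_n,\,y_n)$ lies in the fiber product there, so there is $\tilde y_j\in\EBF(\overrightarrow{MT}_{f_j,g_j})$ with $i^\ast\tilde y_j=y_n$. Next I would record the combinatorial fact that, inside $\vec{Y}_{n+1}$, the subdigraph $\vec{T}_j$ spanned by the ``new'' vertices $(x,1),(x,2)$ ($x\in V_{\vec{K}_j}$) together with all edges incident to them is \emph{finite} (as $\vec{K}_j$ is finite), that $\overrightarrow{MT}_{f_j,g_j}=\vec{Y}_n\cup\vec{T}_j$, and that $\vec{T}_j\cap\vec{T}_{j'}\subseteq\vec{Y}_n$ for $j\neq j'$; all three follow by inspecting which edges the mapping-tube construction creates (it creates no new edges inside the $\vec{H}$-copy, and the new vertices of distinct tubes are never adjacent).

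Then I would well-order $J=\{j_\xi:\xi<\lambda\}$, put $\vec{Z}_{<\xi}:=\vec{Y}_n\cup\bigcup_{\eta<\xi}\vec{T}_{j_\eta}$ and $\vec{Z}_{\le\xi}:=\vec{Z}_{<\xi}\cup\vec{T}_{j_\xi}$ (a chain of subdigraphs with bottom $\vec{Y}_n$ and union $\vec{Y}_{n+1}$), and recursively build $z_\xi\in\EBF(\vec{Z}_{\le\xi})$ with $z_\xi|_{\vec{Y}_n}=y_n$ and $z_\xi|_{\vec{Z}_{<\xi}}=z_{<\xi}$. At a successor stage, $\vec{Z}_{<\xi}\cap\vec{T}_{j_\xi}$ is a \emph{finite} subdigraph contained in $\vec{Y}_n$, on which $z_{<\xi}$ and $\tilde y_{j_\xi}|_{\vec{T}_{j_\xi}}$ both restrict to $y_n$; hence Proposition~\ref{PRP.Adams.3.5} (whose finite-intersection hypothesis is met, with $\vec{G}=\vec{Z}_{<\xi}$ and $\vec{H}=\vec{T}_{j_\xi}$) produces $z_\xi$ restricting simultaneously to $z_{<\xi}$ and to $\tilde y_{j_\xi}|_{\vec{T}_{j_\xi}}$. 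At a limit stage, Lemma~\ref{LEM.adams.3.4} applied to the chain $\{\vec{Z}_{\le\eta}\}_{\eta<\xi}$ assembles the compatible family $(z_\eta)_{\eta<\xi}$ into $z_{<\xi}$. Finally Lemma~\ref{LEM.adams.3.4} applied to the whole chain identifies $\varprojlim_\xi\EBF(\vec{Z}_{\le\xi})$ with $\EBF(\vec{Y}_{n+1})$, and the family $(z_\xi)$ determines an element $y_{n+1}\in\EBF(\vec{Y}_{n+1})$ whose restriction to the bottom $\vec{Y}_n$ of the chain is $y_n$; that is, $i^\ast y_{n+1}=y_n$, giving (i).

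I expect the only genuine obstacle to be the construction of the \emph{single} class $y_{n+1}$: one cannot glue all tubes at once and invoke Proposition~\ref{PRP.Adams.3.5} directly, since the intersection of $\vec{Y}_n$ with the union of all tubes is typically infinite (and $\vec{Y}_n$ is itself infinite). The idea that unlocks the argument is that each individual tube adds only the finite piece $\vec{T}_j$ over $\vec{Y}_n$, so Proposition~\ref{PRP.Adams.3.5} can be applied one tube at a time; running the recursion along a single well-ordered chain, rather than over arbitrary finite subfamilies of tubes, is what makes the partial extensions $z_\xi$ automatically compatible; and Lemma~\ref{MT2} is exactly the input that makes the local extension across one tube possible, precisely under the hypothesis $f_j^\ast y_n=g_j^\ast y_n$. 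The remaining items — the combinatorial claims about $\vec{T}_j$ and the check that the recursion is well posed — are routine but require some care about edge bookkeeping in the mapping-tube construction.
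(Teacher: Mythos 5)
Your construction of $\vec{Y}_{n+1}$ by attaching a mapping tube $\overrightarrow{MT}_{f_j,g_j}$ for every pair with $f_j^\ast y_n=g_j^\ast y_n$, deducing (ii) from Lemma~\ref{MTl} and producing $y_{n+1}$ via Lemma~\ref{MT2} together with Proposition~\ref{PRP.Adams.3.5}, is exactly the approach the paper takes (the paper defers the details to Adams' Lemma~4.2). Your transfinite, one-tube-at-a-time extension correctly supplies the point the paper leaves implicit --- namely how to invoke the finite-intersection Mayer--Vietoris property when infinitely many tubes are attached at once --- and the supporting combinatorial claims about the pieces $\vec{T}_j$ are accurate.
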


    To prove the lemma, one just needs to replace the construction $Y_{n+1} = Y_n \cup \bigcup_{\al \in A} (I \times K_\al)/(I \times \text{pt})$ in the proof of \cite[Lemma~4.2]{Ad} by 
    \begin{equation*}
         \vec{Y}_{n+1}= \bigcup_{\alpha\in A} \big(\overrightarrow{MT}_{f_{\alpha},g_{\alpha}} \big).
    \end{equation*}

    The following lemma is similar to \cite[Proposition~4.4]{Ad}.

    \begin{lem}\label{LEM.representability.on.finite.digraph}
    Given a digraph $\vec{Y}_0$ and an element $y_0\in\EBF(\vec{Y}_0)$, there exist a digraph $\vec{Y}$, an embedding $i\colon \vec{Y}_0\to\vec{Y}$ and an element $y\in\EBF(\vec{Y})$ such that the following hold:
    \begin{itemize}
        \item[(i)] $i^\ast(y) = y_0$;
        \item [(ii)] the map $T_{y,\vG} \colon [\vG,\vec{Y}]\to \BF(\vG)$ is a bijection for each finite digraph $\vG$, where $T_y$ is the natural transformation defined by \eqref{eq:NatTransHclass}.
    \end{itemize}
    \end{lem}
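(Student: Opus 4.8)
The plan is to construct $\vec Y$ as a ``telescope'' — an increasing union of digraphs produced by iterating Lemma~\ref{induct} — after first enlarging $\vec Y_0$ so as to force surjectivity; this parallels \cite[Proposition~4.4]{Ad}. For the enlargement, I would fix a set of representatives of the isomorphism classes of finite digraphs and let $J$ be the set of all pairs $(\vec K_j,u_j)$ with $\vec K_j$ such a representative and $u_j\in\BF(\vec K_j)$. Setting $\vec W_0:=\vec Y_0\coprod\big(\coprod_{j\in J}\vec K_j\big)$, the Additivity Axiom for $\EBF$ (Lemma~\ref{lem:EBF-additivity}) gives $\EBF(\vec W_0)\cong\EBF(\vec Y_0)\times\prod_{j}\BF(\vec K_j)$, so there is an element $w_0\in\EBF(\vec W_0)$ restricting to $y_0$ on $\vec Y_0\embed\vec W_0$ and to $u_j$ on the $j$-th summand $\vec K_j$.

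Next, starting from $(\vec W_0,w_0)$ and iterating Lemma~\ref{induct}, I would obtain embeddings $\vec W_0\embed\vec W_1\embed\vec W_2\embed\cdots$ with $i_n\colon\vec W_n\embed\vec W_{n+1}$, together with elements $w_n\in\EBF(\vec W_n)$ such that $i_n^\ast(w_{n+1})=w_n$ and such that, for every finite digraph $\vec K$ and every pair $f,g\colon\vec K\to\vec W_n$ with $f^\ast w_n=g^\ast w_n$, one has $i_n\circ f\simeq i_n\circ g$ in $\vec W_{n+1}$. I then set $\vec Y:=\bigcup_n\vec W_n$ and let $i\colon\vec Y_0\embed\vec W_0\embed\vec Y$ and $i_{n,\infty}\colon\vec W_n\embed\vec Y$ denote the inclusions. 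Since $\{\vec W_n\}$ is a directed set of subdigraphs of $\vec Y$ with union $\vec Y$, Lemma~\ref{LEM.adams.3.4} provides an isomorphism $\EBF(\vec Y)\xrightarrow{\cong}\varprojlim_n\EBF(\vec W_n)$, under which the compatible family $(w_n)_n$ corresponds to a unique $y\in\EBF(\vec Y)$ with $i_{n,\infty}^\ast(y)=w_n$ for all $n$. Restricting to $\vec Y_0$ yields $i^\ast(y)=y_0$, which is condition (i).

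For condition (ii), surjectivity of $T_{y,\vG}$ would be witnessed as follows: given a finite digraph $\vG$ and $u\in\BF(\vG)$, choose $j$ and an isomorphism $\phi\colon\vG\xrightarrow{\cong}\vec K_j$ with $u_j:=(\phi^{-1})^\ast(u)$; then the composite $f\colon\vG\xrightarrow{\phi}\vec K_j\embed\vec W_0\xrightarrow{i_{0,\infty}}\vec Y$ satisfies $f^\ast(y)=\phi^\ast(u_j)=u$, so $T_{y,\vG}([f])=u$. For injectivity, if $f,g\colon\vG\to\vec Y$ satisfy $f^\ast(y)=g^\ast(y)$, then since $\vG$ is finite their images are finite subdigraphs of $\bigcup_n\vec W_n$, so both $f$ and $g$ factor through a common stage $\vec W_n$, where $f^\ast(w_n)=g^\ast(w_n)$; the defining property of the telescope then gives $i_n\circ f\simeq i_n\circ g$, hence $f\simeq g$ as maps into $\vec Y$. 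Thus $T_{y,\vG}$ is a bijection and (ii) holds.

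Assuming Lemma~\ref{induct} (where the mapping tubes of Definition~\ref{DFN.mapping.tube} do the real work) and Lemma~\ref{LEM.adams.3.4}, the argument is essentially the classical telescope construction, and the remaining points need only routine care: that the surjectivity enlargement is not undone by subsequent telescoping (the summands $\vec K_j$ remain subdigraphs of $\vec Y$ and $y$ restricts to $w_0$ on $\vec W_0$), that a map out of a \emph{finite} digraph into the colimit factors through some $\vec W_n$ (immediate from finiteness of the image), and the set-theoretic bookkeeping that $J$ be indexed by isomorphism-class representatives so as to remain a genuine set (harmless, since $\BF$ and $[-,\vec Y]$ are invariant under isomorphism). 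The one place that genuinely requires thought is arranging surjectivity at all — handled here by the preliminary enlargement $\vec W_0$ — since Lemma~\ref{induct} by itself only supplies the relations needed for injectivity.
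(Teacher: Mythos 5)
Your proposal is correct and follows essentially the same route as the paper: enlarge $\vec{Y}_0$ by adjoining one copy of a representative finite digraph for each element of its $\BF$-value, use additivity to choose the element, iterate Lemma~\ref{induct} to build a telescope, pass to the union via Lemma~\ref{LEM.adams.3.4}, and then check surjectivity from the adjoined summands and injectivity by factoring maps from finite digraphs through a finite stage. The only (harmless) deviation is indexing the summands by isomorphism classes rather than homotopy types of finite digraphs, and your verification of (ii) spells out the routine details the paper leaves to the reader.
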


    As in \cite[Proposition~4.4]{Ad}, we choose a representative $\vec{K}_\al$ from each homotopy type of finite digraphs and form a countable set $\{\vec{K}_\alpha\}_{\alpha\in A}$ by collecting all the chosen representatives. Consider 
    \begin{equation*}\label{eq:Y1}
         \vec{Y}_1 = \vec{Y}_0 \amalg \coprod_{\alpha \in A} \Big(\coprod_{\lambda \in \BF(\vec{K}_\alpha)} \vec{K}_\alpha \Big),        
    \end{equation*}
    where the disjoint union $\coprod_{\lambda \in \BF(\vec{K}_\alpha)} \vec{K}_\alpha$ contains $n_\al = |\BF(\vec{K}_\alpha)|$ copies of $\vec{K}_\alpha$. By Lemma~\ref{lem:EBF-additivity}, we have $\EBF(\vec{Y}_1) = \EBF(\vec{Y}_0)\times\prod_{\alpha \in A} \left (\prod_{\lambda \in \BF(\vec{K}_\alpha)}\EBF(\vec{K}_\alpha) \right )$. Thus, there exists $y_1 \in \EBF(\vec{Y}_1)$ that restricts to $y_0 \in \BF(\vec{Y}_0)$ and to $\lambda \in \BF(\vec{K}_\alpha)$ at the $(\al, \lambda)$-component. By Lemma~\ref{induct}, we have 
    $$
    \vec{Y}_1\sqsubset \vec{Y}_2\sqsubset \vec{Y}_3\sqsubset\cdots\sqsubset \vec{Y}_n\sqsubset\cdots$$
    together with elements $y_n\in \EBF(\vec{Y}_n)$, $n=1,2,\cdots$, such that $y_{n+1}\in\EBF(\vec{Y}_{n+1})$ restricts to $y_n\in\EBF(\vec{Y}_n)$ for each $n$. Now, we define
    \begin{equation*}
        \vec{Y}=\bigcup_n Y_n.
    \end{equation*}
    By Lemma~\ref{LEM.adams.3.4}, there exists an element $y\in\EBF(\vec{Y})$ such that $i^\ast_n(y)=y_n$ for each $n$. It can be verified that this pair $(Y,y)$ satifies the properties in Lemma~\ref{LEM.representability.on.finite.digraph}.

    By Lemma~\ref{LEM.representability.on.finite.digraph}~(ii), we have our main theorem. 
    \begin{thm}\label{THM.main.theorem}
    Let $\BF \colon \hD_0^{\text{op}}\to \textbf{Ab}$ be a Brown functor. Then there exist a digraph $\vec{Y}$ and a natural isomorphism $T\colon [-,\vec{Y}]\to \BF(-)$.
    \end{thm}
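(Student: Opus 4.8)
The plan is to obtain the theorem as a direct consequence of Lemma~\ref{LEM.representability.on.finite.digraph}, applied to a suitably chosen initial pair $(\vec{Y}_0, y_0)$. I would take $\vec{Y}_0$ to be a single point $\ast$; by the Triviality Axiom $\BF(\ast) = 0$, and since the only finite subdigraph of $\ast$ is $\ast$ itself, also $\EBF(\ast) = 0$, so the only possible choice is $y_0 = 0 \in \EBF(\ast)$. Invoking Lemma~\ref{LEM.representability.on.finite.digraph} with this data yields a digraph $\vec{Y}$, an embedding $i\colon \ast \embed \vec{Y}$, and an element $y \in \EBF(\vec{Y})$ such that the map $T_{y,\vG}\colon [\vG, \vec{Y}] \to \BF(\vG)$ defined in \eqref{eq:NatTransHclass} is a bijection for every finite digraph $\vG$.

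It then remains only to check that the family $T := (T_{y,\vG})_{\vG}$ is a natural transformation $[-,\vec{Y}] \to \BF(-)$; since each component is a bijection, it is automatically a natural isomorphism. Naturality is immediate from functoriality of the pullback: for a map $h\colon \vG' \to \vG$ of finite digraphs and $[f] \in [\vG, \vec{Y}]$ one has $T_{y,\vG'}(h^\ast[f]) = (f \circ h)^\ast(y) = h^\ast(f^\ast(y)) = h^\ast(T_{y,\vG}[f])$. Equivalently, $T$ is the image of $y$ under the Yoneda-type correspondence $\phi^{-1}$ of Lemma~\ref{LEM.Brown.Lemma.3.3}, hence a bona fide element of $\Nat([-,\vec{Y}],\BF(-))$. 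This proves the theorem; if desired, one may additionally transport the abelian group structure of $\BF(\vG)$ along $T_{y,\vG}$ to promote $[-,\vec{Y}]$ to an $\textbf{Ab}$-valued functor and $T$ to an isomorphism of such, but this is not needed for the stated result.

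The main point worth attention is the choice of seed data and the assembly, not the argument itself: the substantive work has already been discharged — the extended Mayer--Vietoris property in Proposition~\ref{PRP.Adams.3.5}, the inductive step forcing $y_n$-indistinguishable homotopy classes to coincide after attaching mapping tubes (Lemma~\ref{induct}), and the assembly of the tower $\vec{Y}_1 \sqsubset \vec{Y}_2 \sqsubset \cdots$ into a single classifying digraph with the surjectivity and injectivity of $T_{y,\vG}$ established in Lemma~\ref{LEM.representability.on.finite.digraph}. Consequently there is no real obstacle at the level of Theorem~\ref{THM.main.theorem}; the only thing to double-check is that the component-wise bijections fit together into a genuine natural transformation, which follows from functoriality of $f \mapsto f^\ast$ as above.
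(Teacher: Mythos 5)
Your proposal is correct and follows essentially the same route as the paper, which deduces the theorem in one line from Lemma~\ref{LEM.representability.on.finite.digraph}~(ii); your choice of the seed $(\ast, 0)$ and the explicit naturality check $T_{y,\vG'}(h^\ast[f]) = h^\ast(T_{y,\vG}[f])$ simply spell out details the paper leaves implicit.
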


\section{Path cohomology groups as Brown functors}\label{sec:PathCoh}

The main purpose of this section is to provide a nontrivial example of Brown functor. More precisely, we will show that the first path cohomology of digraphs is a Brown functor (Proposition~\ref{prop:H1Brown}).

\subsection{Path cohomology of digraphs}
In this subsection, we briefly recall the construction of path cohomology groups. See, for example, \cite{GLMY12,GLMY15} for more details. 

Let $\vec{G}= (V,E)$ be a digraph. An {\bf elementary (allowed) $p$-path} in $\vec{G}$ is a sequence $i_0,\cdots, i_p$ of $(p+1)$ vertices of $\vec G$, denoted by $e_{i_0 \cdots i_p}$, such that $(i_j,i_{j+1}) \in E$ for each $j$.  We denote by $\mathcal{A}_p(\vec G)$ the free abelian group generated by all the elementary allowed $p$-path in $\vec{G}$, which is naturally a subgroup of the free abelian group $\Lambda_p(V)$ generated by all the $(p+1)$-tuples in $V$. Consider the boundary operator $\partial:\Lambda_p(V) \to \Lambda_{p-1}(V)$ defined by 
$$ \partial e_{i_0\cdots i_{p}}=\sum\limits^{p}_{k=0}(-1)^k e_{i_0\cdots \widehat{i_k}\cdots i_{p}},
$$
which clearly satisfies the equation $\partial \circ \partial =0$. Note that the subgroups $I_p(V)$ of $\Lambda_p(V)$, generated by $e_{i_0 \cdots i_p}$ with $i_k = i_{k+1}$ for some $k$, form a subcomplex of $(\Lambda_\bullet (V), \partial)$, and thus we have the quotient complex $(\cR_\bullet (V), \partial)$ with $\cR_p(V) = \Lambda_p(V)/I_p(V)$. The group $\cA_p(\vG)$ can be naturally embedded into $\cR_p(V)$, but the pair $(\mathcal{A}_\bullet(\vec G), \partial)$ does \emph{not} form a chain complex since $\partial\big((\mathcal{A}_p(\vec G)\big) \not\subseteq \mathcal{A}_{p-1}(\vec G)$. To obtain a chain complex, we consider the following subgroup of $\mathcal{A}_p(\vec G)$:
$$
\Omega_p(\vec G) = \{ v \in \mathcal{A}_p(\vec G) : \partial v \in \mathcal{A}_{p-1}(\vec G) \}.
$$
It is easy to see that $(\Omega_\bullet(\vec G),\partial)$ is indeed a chain complex, and its homology is referred to as the {\bf (path) homology} of $\vec G$. 

\begin{rmk}\label{rmk:Omega_01}
Since $\mathcal{A}_p(\vec{G})$ is a free abelian group, its subgroup $\Omega_p(\vec{G})$ is also a free abelian group. However, the structure of a basis for $\Omega_p(\vec{G})$ is not immediately clear. In general, we can only describe bases for $\Omega_0(\vec{G})$ and $\Omega_1(\vec{G})$.  

For $p = 0$, the space $\Omega_0(\vec G)$ of $0$-chains is defined as $\mathcal{A}_0(\vec G)$, which is the free abelian group generated by the vertices of the given digraph $\vec G$.  

For $p = 1$, since for any edge $e = (i_1, i_2) \in E$, its boundary satisfies $\partial e = i_2 - i_1 \in \mathcal{A}_0(\vec G)$, the space $\Omega_1(\vec G)$ is the free abelian group with $E$ as its basis.
\end{rmk}

Now we consider the dual complex of $(\Omega_\bullet(\vec G),\partial)$ which will be denoted by $(\Omega^\bullet(\vec G), d)$. Explicitly, 
$$
\Omega^p(\vec G) = \Hom_{\Z}(\Omega_p(\vec G), \Z)
$$
and $d$ is the dual operator of $\partial$. The $p$-th cohomology group of the cochain complex $(\Omega^\bullet(\vec G), d)$ is referred to as the {\bf $p$-th (path) cohomology group} (with coefficients in $\Z$) of $\vec G$, and will be denoted by $H^p(\vec G) = H^p(\vec G,\Z)$.

\begin{exa}
    If $\vG =\ast$ is a singleton, then $\Omega^0(\ast) \cong \Z$ and $\Omega^p(\ast) =0$ for $p \neq 0$. Thus, $H^0(\ast) \cong \Z$ and $H^p(\ast) =0$ for $p \neq 0$.
\end{exa}

For the case $p=0$, we have the following

\begin{prp}\label{prop:H0}
Let $\vec G = (V,E)$ be a digraph with $k$ connected components $\vec G_1, \cdots, \vec G_k$. The zeroth cohomology group of $\vec G$ can be identified with the set of maps
$\{\vec G_1,\cdots,\vec G_k\}\rightarrow\mathbb{Z}.$
\end{prp}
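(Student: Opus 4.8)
\emph{Proof proposal.} The plan is to unwind the definitions at the cochain level and then recognize the kernel of $d$ combinatorially. By Remark~\ref{rmk:Omega_01}, $\Omega_0(\vec G) = \mathcal{A}_0(\vec G)$ is the free abelian group on the vertex set $V$, so $\Omega^0(\vec G) = \Hom_{\Z}(\Omega_0(\vec G),\Z)$ is canonically the group of all functions $\phi\colon V \to \Z$. Since there is no term $\Omega_{-1}$, the cochain complex has $\Omega^{-1}(\vec G) = 0$, and therefore $H^0(\vec G) = \ker\big(d\colon \Omega^0(\vec G) \to \Omega^1(\vec G)\big)$; it remains to compute this kernel.

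Next I would make $d$ explicit. Again by Remark~\ref{rmk:Omega_01}, $\Omega_1(\vec G)$ is free abelian on $E$, and for an edge $e = e_{ij}$ with $(i,j) \in E$ one has $\partial e_{ij} = j - i$ in $\mathcal{A}_0(\vec G)$. Dualizing, for $\phi \in \Omega^0(\vec G)$ we get $(d\phi)(e_{ij}) = \phi(j) - \phi(i)$. Hence $\phi \in \ker d$ if and only if $\phi(i) = \phi(j)$ for every edge $(i,j) \in E$.

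The core step is to identify $\ker d$ with the set of maps $\{\vec G_1,\dots,\vec G_k\} \to \Z$. I would observe that the equation $(d\phi)(e_{ij}) = 0$ is symmetric in $i$ and $j$, so the condition on $\phi$ is exactly that $\phi$ be constant on each class of the equivalence relation on $V$ generated by $i \sim j$ whenever $(i,j) \in E$ \emph{or} $(j,i) \in E$; by transitivity of equality along walks, this forces $\phi$ to be constant on each connected component $\vec G_m$. Conversely, any assignment of a value in $\Z$ to each $\vec G_m$ pulls back to such a $\phi$. This yields a bijection $\ker d \xrightarrow{\;\sim\;} \{\text{maps } \{\vec G_1,\dots,\vec G_k\} \to \Z\}$, which is evidently an isomorphism of abelian groups for the pointwise operations.

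The only real point requiring care — a mild one — is the meaning of \emph{connected component} of a digraph, which is used here without an explicit definition earlier: one must note that the relevant notion is the weak (underlying undirected) connectivity, so that the symmetric closure of the edge relation is what governs $\ker d$, matching the symmetry of $(d\phi)(e_{ij}) = 0$. Everything else is a direct consequence of the descriptions of $\Omega_0(\vec G)$ and $\Omega_1(\vec G)$ recorded in Remark~\ref{rmk:Omega_01}.
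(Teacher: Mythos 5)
Your proof is correct: since the paper states Proposition~\ref{prop:H0} without giving a proof, your direct cochain-level computation — $H^0(\vec G)=\ker\big(d\colon\Omega^0(\vec G)\to\Omega^1(\vec G)\big)$, with $(d\phi)(e_{ij})=\phi(j)-\phi(i)$ forcing $\phi$ to be constant on each (weakly) connected component — is exactly the intended, standard argument. Your side remark that ``connected component'' must be read as weak connectivity (connectivity of the underlying undirected graph), matching the symmetry of the condition $(d\phi)(e_{ij})=0$, is the right reading and the only point needing care.
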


As an immediate consequence of the proof of \cite[Theorem~3.3]{GLMY14}, we have 

\begin{prp}
Let $\vec G, \vec H$ be two digraphs. If $f\simeq g: \vec G \rightarrow \vec H$ are homotopic digraph maps, then they induce the same map in cohomology:
$$
f^{\ast}=g^{\ast}:H^{p}(\vec H)\rightarrow H^p(\vec G), \qquad \forall p\geq 0.
$$
In particular, the $p$-th cohomology induces a functor $H^p \colon \hD_0^{\text{op}}\ra \textbf{Ab}$.
\end{prp}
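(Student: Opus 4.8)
The strategy is to deduce the claim from the chain-level homotopy invariance underlying \cite[Theorem~3.3]{GLMY14} by dualizing. First I would recall that a digraph map $f\colon\vG\to\vH$ induces a chain map $f_\bullet\colon\Omega_\bullet(\vG)\to\Omega_\bullet(\vH)$: the assignment $e_{i_0\cdots i_p}\mapsto e_{f(i_0)\cdots f(i_p)}$ descends to the quotient complexes $\cR_\bullet$ (a degenerate path is sent to a degenerate path), carries an allowed path of $\vG$ either to a degenerate path or to an allowed path of $\vH$, commutes with $\partial$, and hence restricts to the subcomplexes $\Omega_\bullet$; see \cite{GLMY12,GLMY15}. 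Applying $\Hom_{\Z}(-,\Z)$ produces a cochain map $f^\bullet\colon\Omega^\bullet(\vH)\to\Omega^\bullet(\vG)$, and since $f\mapsto f^\bullet$ visibly respects identities and composition, this already yields a functor $H^p\colon\cD_0^{\mathrm{op}}\to\textbf{Ab}$; the substance of the proposition is that it factors through the homotopy category.

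Next, given a homotopy $F\colon\vG\Box I_n\to\vH$ between $f$ and $g$, the proof of \cite[Theorem~3.3]{GLMY14} supplies a $\Z$-linear chain homotopy $L_\bullet\colon\Omega_\bullet(\vG)\to\Omega_{\bullet+1}(\vH)$ with $f_\bullet-g_\bullet=\partial L_\bullet+L_\bullet\partial$ (equivalently, a chain homotopy between the two inclusions of $\vG$ into $\vG\Box I_n$ at the ends of the line digraph, which is then composed with $F_\bullet$). The essential --- and only non-formal --- point there is that the relevant homotopy operator genuinely preserves the subcomplexes $\Omega_\bullet$, not merely the allowed complexes $\mathcal{A}_\bullet$, and this is exactly what that proof verifies. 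Dualizing, i.e.\ applying the additive contravariant functor $\Hom_{\Z}(-,\Z)$, writing $L^\bullet=\Hom_{\Z}(L_\bullet,\Z)\colon\Omega^{\bullet+1}(\vH)\to\Omega^\bullet(\vG)$ and letting $d$ be the differential dual to $\partial$, one obtains
$$
f^\bullet-g^\bullet=L^\bullet d+d\,L^\bullet ,
$$
so $f^\bullet$ and $g^\bullet$ are cochain homotopic and therefore induce the same homomorphism $f^*=g^*\colon H^p(\vH)\to H^p(\vG)$ for every $p\ge 0$. Combined with the first paragraph, $H^p$ descends to the asserted functor $H^p\colon\hD_0^{\mathrm{op}}\to\textbf{Ab}$ (and indeed to one on all of $\hD$).

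I do not anticipate a genuine obstacle: once the chain homotopy of \cite[Theorem~3.3]{GLMY14} is in hand, the rest is routine homological algebra, using only that an additive functor carries chain homotopies to cochain homotopies --- for this implication neither exactness of $\Hom_{\Z}(-,\Z)$ nor freeness of $\Omega_\bullet$ is required. The two points to handle with mild care are the bookkeeping of degrees and signs in the dualized identity, and confirming that the homotopy operator cited from \cite{GLMY14} is formulated at the level of $\Omega_\bullet$ rather than on $\mathcal{A}_\bullet$; both are already dealt with there, which is why we record the proposition as an immediate consequence of that proof rather than reproving it in detail.
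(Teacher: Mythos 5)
Your argument is exactly what the paper intends: the paper gives no proof and simply records the proposition as an immediate consequence of the chain homotopy constructed in the proof of \cite[Theorem~3.3]{GLMY14}, which you then dualize via the additive functor $\Hom_{\Z}(-,\Z)$ to get a cochain homotopy between $f^\bullet$ and $g^\bullet$. This is correct and coincides with the paper's (implicit) route, with the details filled in appropriately.
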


\subsection{Brown functor properties of path cohomology}

We first consider the zeroth cohomology $H^0$. Although $H^0$ is not a Brown functor ($H^0(\ast) \neq 0$), it is still representable. In fact, by Proposition~\ref{prop:H0}, we have 

\begin{prp}\label{prop:H0Brown}
    As functors $\hD_0^{\text{op}}\to \textbf{Ab}$, there is a natural isomorphism between $[-,\Z]$ and $H^0(-)$. Here, the notation $\Z$ refers to the digraph whose set of vertices is the set of integers and whose set of edges is empty. 
\end{prp}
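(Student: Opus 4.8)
The plan is to write the natural isomorphism down explicitly and check that it is a bijection on each finite digraph; the whole argument reduces to the elementary description of the zeroth path cochain complex. First I would recall, using Remark~\ref{rmk:Omega_01}, that for a digraph $\vG=(V,E)$ one has $\Omega^0(\vG)=\Hom_{\Z}(\Omega_0(\vG),\Z)$, which is the group of all functions $V\to\Z$, and $\Omega^1(\vG)=\Hom_{\Z}(\Omega_1(\vG),\Z)$, with coboundary $d\colon\Omega^0(\vG)\to\Omega^1(\vG)$ acting by $(d\phi)(e_{xy})=\phi(y)-\phi(x)$. Since there is no cochain group in negative degree, $H^0(\vG)=\ker d$ is precisely the subgroup of functions $\phi\colon V\to\Z$ satisfying $\phi(x)=\phi(y)$ for every edge $(x,y)\in E$, i.e.\ the functions that are constant on each connected component of $\vG$; this is exactly the identification of Proposition~\ref{prop:H0}.

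Next I would construct the map. The digraph $\Z$ has empty edge set, so $\Omega^1(\Z)=0$ and $H^0(\Z)=\Omega^0(\Z)$, which contains the tautological element $\id_{\Z}\colon V_{\Z}=\Z\to\Z$. For a finite digraph $\vG$, define
\[
T_{\vG}\colon [\vG,\Z]\longrightarrow H^0(\vG),\qquad [f]\longmapsto f^{\ast}(\id_{\Z}).
\]
This is well defined because homotopic digraph maps induce equal maps on path cohomology (as recalled above), and it is natural in $\vG$ by functoriality of $H^0$: for $h\colon\vG'\to\vG$ one has $T_{\vG'}([f\circ h])=(f\circ h)^{\ast}(\id_{\Z})=h^{\ast}\big(f^{\ast}(\id_{\Z})\big)=h^{\ast}\big(T_{\vG}([f])\big)$. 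Unwinding the dualization, $f^{\ast}$ on $\Omega^0$ is precomposition with the vertex function of $f$, so $T_{\vG}([f])\in H^0(\vG)\subseteq\Omega^0(\vG)$ is simply the function $x\mapsto f(x)$.

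Finally I would verify that each $T_{\vG}$ is a bijection. For surjectivity: given $\phi\in H^0(\vG)$, the description above says $\phi\colon V\to\Z$ collapses every edge, hence $\phi$ is itself a digraph map $\vG\to\Z$ (permissible since $\Z$ has no edges that would need to be preserved), and $T_{\vG}([\phi])=\phi$. For injectivity: if $T_{\vG}([f_1])=T_{\vG}([f_2])$ then $f_1$ and $f_2$ coincide as vertex functions, hence $f_1=f_2$ as digraph maps, and a fortiori $[f_1]=[f_2]$. This gives the desired natural isomorphism $[-,\Z]\xrightarrow{\,\cong\,}H^0(-)$. The only point requiring any care is the identification of $H^0(\vG)$ with the locally constant $\Z$-valued functions on $V_{\vG}$ together with the observation that every such function is automatically a digraph map into the edgeless digraph $\Z$ --- which is exactly what makes $T_{\vG}$ surjective; beyond that there is no real obstacle.
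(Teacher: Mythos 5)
Your proposal is correct and is essentially the argument the paper intends: the paper deduces the proposition directly from Proposition~\ref{prop:H0}, i.e.\ from the identification of $H^0(\vG)$ with $\Z$-valued functions constant on connected components, which is exactly the identification you make explicit (via $f^\ast(\id_{\Z})$) and check to be natural and bijective. The only point you leave tacit is that $[\vG,\Z]$ carries the pointwise abelian group structure (possible precisely because $\Z$ is edgeless) and that $T_{\vG}$ is additive for it, so that the natural bijection is an isomorphism of $\textbf{Ab}$-valued functors; this is immediate from your description of $T_{\vG}([f])$ as the vertex function of $f$.
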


To prove that $H^1 \colon \hD_0^{\text{op}}\ra \textbf{Ab}$ is a Brown functor, we need to verify that it satisfies the additivity axiom and Mayer-Vietoris axiom in Definition~\ref{DFN.brown.functor}. The proof of the additivity axiom is straightforward, so we will focus on the Mayer-Vietoris axiom.

Let $\vec G_1$ and $\vec G_2$ be two subdigraphs of a digraph $\vec G$ so that $\vec G = \vec G_1 \cup \vec G_2$. The inclusion maps
\begin{equation}\label{eq:GraphInclusion}
	\begin{tikzcd}
		\vec G_1\cap \vec G_2 \arrow[r,hook, "\f j_2"] \arrow[d,hook, "\f j_1"]
		& \vec G_2 \arrow[d,hook, "\f i_2"] \\
		\vec G_1 \arrow[r,hook, "\f i_1"]
		& \vec G
	\end{tikzcd}
\end{equation}
induce commutative diagrams 
\begin{equation}\label{eq:InducedByInclusion}
\begin{tikzcd}
		\Omega^p(\vec{G}_1\cap \vec{G}_2) \arrow[r,leftarrow, "\f j_2^*"] 
		& \Omega^p(\vec{G}_2)  \\
		\Omega^p(\vec{G}_1) \arrow[u, "\f j_1^*"]  \arrow[r,leftarrow, "\f i_1^*"]
		& \Omega^p(\vec G) \arrow[u, "\f i_2^*"]
	\end{tikzcd}
\end{equation}
for all $p\geq 0$. 
Let $\f i:\Omega^p(\vec G)\rightarrow \Omega^p(\vec G_1)\oplus \Omega^p(\vec G_2)$ and  $\f j:\Omega^p(\vec G_1)\oplus \Omega^p(\vec G_2)\rightarrow \Omega^p(\vec G_1 \cap \vec G_2)$ be the maps
\begin{align*}
\f i(\gamma) & =(\f i_1^*(\gamma),\f i_2^*(\gamma)), \\
\f j(\alpha,\beta) & =\f j_1^*(\alpha) - \f j_2^*(\beta).
\end{align*}

\begin{lem}\label{lem:ExactAtMiddle}
The sequence
$$
\begin{tikzcd}
	\Omega^1(\vec G) \ar[r,"\f i"] &  \Omega^1(\vec G_1)\oplus \Omega^1(\vec G_2) \ar[r,"\f j"] & \Omega^1(\vec  G_1 \cap \vec G_2) 
\end{tikzcd}
$$
is exact at $\Omega^1(\vec G_1)\oplus \Omega^1(\vec G_2)$.
\end{lem}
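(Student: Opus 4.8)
The plan is to exploit the explicit description of $\Omega^1$ given in Remark~\ref{rmk:Omega_01}: for any digraph $\vec{K}$, since $\Omega_1(\vec{K})$ is the free abelian group on the edge set $E_{\vec{K}}$, one may identify $\Omega^1(\vec{K}) = \Hom_{\Z}(\Omega_1(\vec{K}),\Z)$ with the group of $\Z$-valued functions on $E_{\vec{K}}$. Under this identification, every inclusion of digraphs $\vec{K}\hookrightarrow\vec{L}$ — which collapses no edges, hence sends the basis edge $e\in E_{\vec{K}}$ of $\Omega_1(\vec{K})$ to the basis edge $e\in E_{\vec{L}}$ of $\Omega_1(\vec{L})$ — induces on $\Omega^1$ the \emph{restriction} map, taking a function on $E_{\vec{L}}$ to its restriction to $E_{\vec{K}}\subseteq E_{\vec{L}}$. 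In particular $\f i_1^*,\f i_2^*$ are restriction along $E_{\vec{G}_1},E_{\vec{G}_2}\subseteq E_{\vec{G}}$, and $\f j_1^*,\f j_2^*$ are restriction along $E_{\vec{G}_1\cap\vec{G}_2}\subseteq E_{\vec{G}_1},E_{\vec{G}_2}$. I would also record the set-theoretic identities $E_{\vec{G}} = E_{\vec{G}_1\cup\vec{G}_2} = E_{\vec{G}_1}\cup E_{\vec{G}_2}$ and $E_{\vec{G}_1\cap\vec{G}_2} = E_{\vec{G}_1}\cap E_{\vec{G}_2}$, which hold by the definitions of union and intersection of subdigraphs and which, importantly, do not require $\vec{G}_1\cap\vec{G}_2$ to be an induced subdigraph.

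First I would check $\image(\f i)\subseteq\ker(\f j)$. Since $\f i_1\circ\f j_1 = \f i_2\circ\f j_2$ as digraph maps $\vec{G}_1\cap\vec{G}_2\hookrightarrow\vec{G}$, functoriality of $\Omega^1$ gives $\f j_1^*\circ\f i_1^* = \f j_2^*\circ\f i_2^*$; hence for any $\gamma\in\Omega^1(\vec{G})$ one has $\f j(\f i(\gamma)) = \f j_1^*\f i_1^*(\gamma) - \f j_2^*\f i_2^*(\gamma) = 0$.

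The substantive direction is $\ker(\f j)\subseteq\image(\f i)$, which is a gluing argument. Let $(\alpha,\beta)\in\Omega^1(\vec{G}_1)\oplus\Omega^1(\vec{G}_2)$ satisfy $\f j_1^*(\alpha) = \f j_2^*(\beta)$; viewing $\alpha,\beta$ as functions on $E_{\vec{G}_1},E_{\vec{G}_2}$, this says precisely that $\alpha$ and $\beta$ agree on $E_{\vec{G}_1}\cap E_{\vec{G}_2} = E_{\vec{G}_1\cap\vec{G}_2}$. Define $\gamma\colon E_{\vec{G}} = E_{\vec{G}_1}\cup E_{\vec{G}_2}\to\Z$ by $\gamma(e) = \alpha(e)$ for $e\in E_{\vec{G}_1}$ and $\gamma(e) = \beta(e)$ for $e\in E_{\vec{G}_2}$; the agreement condition makes this well defined, so $\gamma\in\Omega^1(\vec{G})$. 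By construction $\f i_1^*(\gamma) = \gamma|_{E_{\vec{G}_1}} = \alpha$ and $\f i_2^*(\gamma) = \gamma|_{E_{\vec{G}_2}} = \beta$, i.e.\ $\f i(\gamma) = (\alpha,\beta)$, establishing exactness at the middle term.

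I do not anticipate a serious obstacle; the only points requiring a moment's care are the justification that $\Omega^1$ of an inclusion is honestly a restriction map — which follows because an inclusion collapses no edges and $\Omega_1$ is free on the edges (Remark~\ref{rmk:Omega_01}) — and the observation that edge sets of intersections behave correctly even when $\vec{G}_1\cap\vec{G}_2$ fails to be an induced subdigraph. Note that the lemma asks only for exactness \emph{at the middle}: surjectivity of $\f j$ (the genuinely Mayer--Vietoris-type statement, which is in general \emph{false} at the level of $\Omega^1$ and holds only after passing to $H^1$) is neither needed nor claimed here.
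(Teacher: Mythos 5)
Your proposal is correct and follows essentially the same route as the paper's proof: both use Remark~\ref{rmk:Omega_01} to view $\Omega^1$ as $\Z$-valued functions on edges, note that $\f j\circ\f i=0$ is immediate, and establish $\ker(\f j)\subseteq\image(\f i)$ by gluing $\alpha$ and $\beta$ into a cochain $\gamma$ defined edgewise, with well-definedness coming from agreement on the common edges. Your extra remarks on restriction maps and on $E_{\vec G_1\cap\vec G_2}=E_{\vec G_1}\cap E_{\vec G_2}$ are accurate and only make explicit what the paper leaves implicit.
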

\begin{proof}
Recall from Remark~\ref{rmk:Omega_01} that $\Omega_1(\vec G) = \mathcal{A}_1(\vec G)$ is the free abelian group generated by the edges of $\vec G$.

It is clear that $\f j \circ \f i = 0$, and thus it suffices to show that $\ker(\f j) \subset \image(\f i)$. 
Suppose that $(\alpha,\beta)$ is an arbitrary element in $\ker(\f j)$, i.e.\ $\alpha \in \Omega^1(\vec G_1)$ and $ \beta \in  \Omega^1(\vec G_2)$ satisfy the equation $\f j_1^*(\alpha) = \f j_2^*(\beta)$ in $ \vec G_1 \cap \vec G_2$. 
    To show $(\alpha,\beta) \in \image(\f i)$, we define $\gamma\in\Omega^1(\vec G) = \Hom_\Z(\Omega_1(\vec G), \Z)$ to be the homomorphism such that 
\begin{equation*}
	\gamma(e_{i_0 i_1})=
	\begin{cases} 
		\alpha(e_{i_0 i_1}), & \text{if } e_{i_0 i_1} \text{ is an edge in $\vec G_1$,} \\
		\beta(e_{i_0 i_1}), & \text{if } e_{i_0 i_1} \text{ is an edge in $\vec G_2$.} 
	\end{cases}
\end{equation*}
Note that $\gamma$ is well-defined since if $e_{i_0 i_1}$ in both $\vec G_1$ and $\vec G_2$, then it follows from the assumption $(\alpha, \beta) \in \ker(\f j)$ that $\alpha(e_{i_0 i_1})=\beta(e_{i_0 i_1})$. 
Furthermore, by the definition of $\gamma$, we have $(\alpha,\beta) = \f i (\gamma) \in \image(\f i)$, as desired.
\end{proof}

\begin{lem}\label{lem:OntoAtEnd}
The map 
$$
\f j: \Omega^0(\vec G_1)\oplus \Omega^0(\vec G_2) \to \Omega^0(\vec G_1 \cap \vec G_2) 
$$
is surjective.
\end{lem}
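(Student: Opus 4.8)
The plan is simply to unwind the definitions, after which surjectivity is immediate. By Remark~\ref{rmk:Omega_01}, $\Omega_0(\vec G) = \mathcal{A}_0(\vec G)$ is the free abelian group on the vertex set of $\vec G$, so $\Omega^0(\vec G) = \Hom_\Z(\Omega_0(\vec G),\Z)$ is canonically identified with the group of all $\Z$-valued functions on $V_{\vec G}$, and under this identification the pullback $\f j_1^*\colon \Omega^0(\vec G_1)\to \Omega^0(\vec G_1\cap \vec G_2)$ is just restriction of functions along the inclusion of vertex sets $V_{\vec G_1\cap \vec G_2} = V_{\vec G_1}\cap V_{\vec G_2}\hookrightarrow V_{\vec G_1}$ (and similarly for $\f j_2^*$). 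This uses only the degree-$0$ part of the setup in~\eqref{eq:InducedByInclusion}.

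First I would observe that this restriction map $\f j_1^*$ is already surjective: given any $\phi\in\Omega^0(\vec G_1\cap\vec G_2)$, define $\alpha\in\Omega^0(\vec G_1)$ by $\alpha(v)=\phi(v)$ for $v\in V_{\vec G_1\cap\vec G_2}$ and $\alpha(v)=0$ for $v\in V_{\vec G_1}\setminus V_{\vec G_1\cap\vec G_2}$; then $\f j_1^*(\alpha)=\phi$. Taking $\beta = 0\in\Omega^0(\vec G_2)$, we get $\f j(\alpha,\beta)=\f j_1^*(\alpha)-\f j_2^*(0)=\phi$, which proves that $\f j$ is onto. The only point requiring any care is that extension-by-zero makes sense, i.e.\ that $V_{\vec G_1\cap\vec G_2}$ is genuinely a subset of $V_{\vec G_1}$; this holds by the definition of the intersection of subdigraphs given in Section~\ref{sec:Operation}. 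There is no real obstacle in this lemma — the content is entirely in degree $1$ (Lemma~\ref{lem:ExactAtMiddle}), since in degree $0$ there is no cocycle condition to match up.
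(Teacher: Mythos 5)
Your proof is correct and is essentially the paper's argument: the paper decomposes $\Omega_0(\vec G_1)=\Omega_0(\vec G_1\cap\vec G_2)\oplus A_1$ with $A_1$ free on $V_1\setminus V_2$ and concludes by extension by zero, which is exactly your observation that, viewing $\Omega^0$ as $\Z$-valued functions on vertices, $\f j_1^*$ is restriction and extension-by-zero already makes $\f j_1^*$ (hence $\f j$, taking $\beta=0$) surjective.
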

\begin{proof}
Let $V$, $V_1$ and $V_2$ be the sets of vertices of $\vec G$, $\vec G_1$ and $\vec G_2$, respectively. Since $\vec G = \vec G_1 \cup \vec G_2$, we have 
$$
V = (V_1 \cap V_2) \coprod (V \setminus V_2) \coprod  (V \setminus V_1).
$$
By Remark~\ref{rmk:Omega_01}, we have 
$$
\Omega_0(\vec G_1) = \Omega_0(\vec G_1 \cap \vec G_2) \oplus A_1,
$$
where $A_1$ is the free abelian group generated by $V \setminus V_2$. The lemma follows immediately from this decomposition.
\end{proof}

Finally, we prove the main result of this section.

\begin{prp}\label{prop:H1Brown}
The functor $H^1:\hD_0^{\text{op}}\to \textbf{Ab}$ is a Brown functor.
\end{prp}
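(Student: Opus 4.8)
The plan is to verify the three axioms of Definition~\ref{DFN.brown.functor} for $H^1$. The triviality axiom holds because $\Omega^1(\ast)=0$, so $H^1(\ast)=0$. For the additivity axiom, note that an elementary allowed path, and its boundary, is supported in a single connected component, so $\Omega_p(\coprod_{\al}\vG_\al)=\bigoplus_\al\Omega_p(\vG_\al)$ for every $p$; applying $\Hom_\Z(-,\Z)$ turns this direct sum into a product of cochain complexes, and since cohomology commutes with products of complexes of abelian groups we obtain $H^1(\coprod_\al\vG_\al)=\prod_\al H^1(\vG_\al)$. Thus the real content is the Mayer--Vietoris axiom, which I would prove by a diagram chase on the three cochain complexes $\Omega^\bullet(\vG)$, $\Omega^\bullet(\vG_1)\oplus\Omega^\bullet(\vG_2)$, $\Omega^\bullet(\vG_1\cap\vG_2)$ associated to a decomposition $\vG=\vG_1\cup\vG_2$, imitating the classical Mayer--Vietoris argument.

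Two structural facts feed the chase. In degree $0$: the sequence $0\to\Omega^0(\vG)\xrightarrow{\f i}\Omega^0(\vG_1)\oplus\Omega^0(\vG_2)\xrightarrow{\f j}\Omega^0(\vG_1\cap\vG_2)\to 0$ is short exact, since $\Omega_0$ is free on the vertex set (Remark~\ref{rmk:Omega_01}) and $V_\vG=V_{\vG_1}\cup V_{\vG_2}$, the surjectivity of $\f j$ being Lemma~\ref{lem:OntoAtEnd}. In degree $1$: the sequence $\Omega^1(\vG)\xrightarrow{\f i}\Omega^1(\vG_1)\oplus\Omega^1(\vG_2)\xrightarrow{\f j}\Omega^1(\vG_1\cap\vG_2)$ is exact at the middle term by Lemma~\ref{lem:ExactAtMiddle}, and $\f i$ is moreover injective because $\Omega_1$ is free on the edge set and every edge of $\vG$ lies in $\vG_1$ or in $\vG_2$. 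Now let $([\alpha],[\beta])$ be an element of the fibre product $H^1(\vG_1)\times_{H^1(\vG_1\cap\vG_2)}H^1(\vG_2)$, represented by cocycles $\alpha\in\Omega^1(\vG_1)$, $\beta\in\Omega^1(\vG_2)$ with $\f j_1^\ast\alpha-\f j_2^\ast\beta=d\eta$ for some $\eta\in\Omega^0(\vG_1\cap\vG_2)$. Writing $\eta=\f j_1^\ast\eta_1-\f j_2^\ast\eta_2$ via Lemma~\ref{lem:OntoAtEnd} and replacing $\alpha,\beta$ by $\alpha-d\eta_1$ and $\beta-d\eta_2$ (which changes neither $[\alpha]$ nor $[\beta]$), I may assume $\f j_1^\ast\alpha=\f j_2^\ast\beta$, i.e.\ $(\alpha,\beta)\in\ker\f j$. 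By the middle exactness in degree $1$ there is $\gamma\in\Omega^1(\vG)$ with $\f i_1^\ast\gamma=\alpha$ and $\f i_2^\ast\gamma=\beta$; once one knows $\gamma$ is a cocycle, the class $[\gamma]\in H^1(\vG)$ maps to $([\alpha],[\beta])$, which is the required surjectivity.

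The one remaining step --- and the one I expect to be the genuine obstacle --- is to verify that the glued cochain $\gamma$ is closed, i.e.\ $d\gamma=0$ in $\Omega^2(\vG)$. What comes for free is only that $\f i_1^\ast(d\gamma)=d\alpha=0$ and $\f i_2^\ast(d\gamma)=d\beta=0$, so $d\gamma$ is a functional on $\Omega_2(\vG)$ vanishing on $\Omega_2(\vG_1)+\Omega_2(\vG_2)$. Unlike in degrees $0$ and $1$, where $\Omega_p$ of the union is spanned by the two pieces, the group $\Omega_2(\vG)$ can be strictly larger than $\Omega_2(\vG_1)+\Omega_2(\vG_2)$ --- an allowed $2$-path of $\vG$ may straddle the two subdigraphs --- so closedness of $\gamma$ is not a purely homological consequence of the degree $\le 1$ exactness and must be extracted from the combinatorics of $\Omega_2$. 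Concretely I would analyze, for each allowed $2$-path $e_{ijk}$ of $\vG$, how the three edges occurring in $\partial e_{ijk}$ are distributed between $E_{\vG_1}$ and $E_{\vG_2}$, and exploit the freedom still left in the choice of representatives $\alpha,\beta$ (and of $\eta_1,\eta_2$) after the normalization above to force $d\gamma$ to vanish on all of $\Omega_2(\vG)$. This is the point at which the argument genuinely uses properties of path cohomology rather than pure homological algebra, and the portion I would write out with the most care.
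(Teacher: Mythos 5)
Your argument coincides with the paper's own proof of Proposition~\ref{prop:H1Brown} up to and including the production of the glued cochain $\gamma$: the reduction to an exactly matched pair $(z_1,z_2)\in\ker(\f j)$ via Lemma~\ref{lem:OntoAtEnd}, followed by Lemma~\ref{lem:ExactAtMiddle}, is precisely the published argument. The point you flag at the end --- that $\gamma$ is a priori only a $1$-cochain, and that $d\gamma=0$ must still be checked on elements of $\Omega_2(\vG_1\cup\vG_2)$ that do not lie in $\Omega_2(\vG_1)+\Omega_2(\vG_2)$ --- is exactly right, and it is a step the paper's proof passes over in silence (the published proof ends by writing $[\f i_1^\ast\gamma]$, $[\f i_2^\ast\gamma]$ without verifying that $[\gamma]$ is a well-defined class in $H^1(\vG_1\cup\vG_2)$). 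So you have correctly isolated the one genuinely nontrivial point; but your proposal does not close it, and as written the proof is incomplete there.

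Moreover, the repair you sketch --- exploiting the remaining freedom in the representatives to force $d\gamma=0$ --- cannot succeed in general, because the Mayer--Vietoris surjectivity itself fails for arbitrary subdigraph decompositions. Take vertices $u,v,v',s_1,s_2,r$; let $\vG_1$ be the $5$-cycle with edges $u\to v\to r$ and $u\to s_1\to s_2\to r$, let $\vG_2$ be the $5$-cycle with edges $u\to v'\to r$ and $u\to s_1\to s_2\to r$, and let $\vG=\vG_1\cup\vG_2$; both $\vG_i$ are induced subdigraphs of $\vG$. Here $\Omega_2(\vG_1)=\Omega_2(\vG_2)=0$, so $H^1(\vG_1)\cong H^1(\vG_2)\cong\Z$, while $\vG_1\cap\vG_2$ is the line $u\to s_1\to s_2\to r$, so $H^1(\vG_1\cap\vG_2)=0$ and the fibre product is $\Z\oplus\Z$. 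On the other hand $\Omega_2(\vG)$ is generated by the straddling square $e_{uvr}-e_{uv'r}$, so every $1$-cocycle $\gamma$ on $\vG$ satisfies $\gamma(e_{uv})+\gamma(e_{vr})=\gamma(e_{uv'})+\gamma(e_{v'r})$, which forces the two restricted classes to coincide under the identifications $H^1(\vG_i)\cong\Z$; indeed $H^1(\vG)$ has rank one and cannot surject onto $\Z\oplus\Z$. Thus the step you isolate is not merely unfinished: it is where the statement, as formulated for arbitrary finite subdigraphs $\vG_1,\vG_2$, actually breaks down, and some hypothesis ensuring $\Omega_2(\vG_1\cup\vG_2)=\Omega_2(\vG_1)+\Omega_2(\vG_2)$ (ruling out squares and triangles that straddle the two pieces) would be needed before either your argument or the paper's can be completed.
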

\begin{proof}
It is straightforward that $H^1$ satisfies the additivity axiom. For the Mayer-Vietoris axiom, first note that, due to the commutativity of the diagram \eqref{eq:InducedByInclusion}, we have the map 
$$
    H^1(\vec G_1 \cup \vec G_2) \to H^1(\vec G_1) \times_{H^1(\vec G_1 \cap \vec G_2)} H^1(\vec G_2), \quad [\gamma] \mapsto ( [\f i_1^\ast \gamma], [\f i_2^\ast \gamma]).
$$
We claim that it is surjective.

Consider the commutative diagram
$$
	\begin{tikzcd}
		 \Omega^1(\vec G_1 \cup \vec G_2)  \arrow[r, "\f i "]
		& \Omega^1(\vec G_1)\oplus \Omega^1(\vec G_2)  \arrow[r, "\f j "]
		& \Omega^1(\vec G_1\cap \vec G_2)  
		\\
		 \Omega^0(\vec G_1 \cup \vec G_2) \arrow[u, "d"]  \arrow[r, "\f i"]
		& \Omega^0(\vec G_1)\oplus \Omega^0(\vec G_2) \arrow[u, "d \oplus d"]  \arrow[r, "\f j"]
		& \Omega^0(\vec G_1\cap \vec G_2) \arrow[u, "d"] .
	\end{tikzcd}
$$
 Let $([\alpha],[\beta])\in H^1(G_1)\times H^1(G_2)$ be an arbitrary pair of classes such that $\f j_1^*([\alpha])=\f j_2^*([\beta])$, i.e.\ $\f j(\alpha,\beta) = \f j^*_1(\alpha) - \f j^*_2(\beta)=dx$ for some $x\in\Omega^0(\vec G_1\cap \vec G_2)$. By Lemma~\ref{lem:OntoAtEnd}, there exists $(y_1,y_2)\in\Omega^0(\vec G_1)\oplus \Omega^0(\vec G_2)$ such that $\f j(y_1,y_2)=x$. 

Consider  $(z_1,z_2)=(\alpha-dy_1,\beta-dy_2) \in \Omega^1(\vec G_1)\oplus \Omega^1(\vec G_2)$. Since 
$$
\f j(dy_1,dy_2)=dx= \f j^*_1\alpha - \f j^*_2\beta= \f j(\alpha,\beta),
$$
we have $(z_1,z_2) \in \ker(\f j)$. By Lemma~\ref{lem:ExactAtMiddle}, there exists $\gamma\in\Omega^1(\vec G_1\cup \vec G_2)$ such that $(\f i^*_1\gamma,\f i^*_2\gamma)=(z_1,z_2)$, i.e.\ 
$$
\f i^*_1\gamma=\alpha-dy_1 \quad \text{and} \quad \f i^*_2\gamma=\beta-dy_2.
$$
Thus $( [\f i_1^\ast \gamma], [\f i_2^\ast \gamma]) =([\alpha],[\beta])$, and the proof is complete.
\end{proof}

 Since the structure of bases is crucial in our proofs of Lemma~\ref{lem:ExactAtMiddle} and Lemma~\ref{lem:OntoAtEnd}, and we do not have good descriptions of bases for $\Omega_p(\vec{G})$ for $p > 1$, it is not clear to us at this moment whether $H^p$ for $p > 1$ is a Brown functor. (See \cite{BC24} for a discussion on bases for $\Omega_p(\vec{G})$.) In fact, due to similar difficulties, technical assumptions are imposed in \cite[Theorem~3.25]{GJMY18} to obtain Mayer-Vietoris sequences of path homology.

It is well-known that there are natural bijections between singular cohomology $H^n(X,\Z)$ and the homotopy classes $[X,K(\Z,n)]$ of continuous maps from $X$ to $K(\Z,n)$; see, for example, \cite[Theorem~4.57]{Ha}. This naturally raises the question of whether there is a similar bijection for path cohomology. There are two main difficulties. First, a theory of Eilenberg–MacLane spaces $K(\Z,n)$ is not yet well established for digraphs. Second, recall that \cite[Theorem~4.57]{Ha} based on the uniqueness theorem of the cohomology theory, but such a uniqueness theorem has not been established for digraphs; see \cite[Remark~5.3]{GJMY18}.

\begin{bibdiv}
\begin{biblist}
\bib{Ad}{article}{
   author={Adams, J. F.},
   title={A variant of E. H. Brown's representability theorem},
   journal={Topology},
   volume={10},
   date={1971},
   pages={185--198},
   issn={0040-9383},
   doi={10.1016/0040-9383(71)90003-6},
}

\bib{BYY01}{article}{
   author={Chen, Beifang},
   author={Yau, Shing-Tung},
   author={Yeh, Yeong-Nan},
   title={Graph homotopy and Graham homotopy},
   journal={Discrete Math.},
   volume={241},
   date={2001},
   number={1-3},
   pages={153--170},
   issn={0012-365X},
   doi={10.1016/S0012-365X(01)00115-7},
}

\bib{B62}{article}{
   author={Brown, Edgar H., Jr.},
   title={Cohomology theories},
   journal={Ann. of Math. (2)},
   volume={75},
   date={1962},
   pages={467--484},
   issn={0003-486X},
   doi={10.2307/1970209},
}
\bib{B65}{article}{
   author={Brown, Edgar H., Jr.},
   title={Abstract homotopy theory},
   journal={Trans. Amer. Math. Soc.},
   volume={119},
   date={1965},
   pages={79--85},
   issn={0002-9947},
   doi={10.2307/1994231},
}

\bib{BC24}{article}{
   author={{Burfitt}, Matthew},
   author={{Cutler}, Tyrone},
   title={Inductive construction of path homology chains},
   journal={arXiv:2411.09501},
   date={2024},
}

\bib{CDKOSW}{article}{
   author={Carranza, Daniel},
   author={Doherty, Brandon},
   author={Kapulkin, Krzyzstof},
   author={Opie, Morgan},
   author={Sarazola, Maru},
   author={Wong, Liang Ze},
   title={Cofibration category of digraphs for path homology},
   journal={Algebr. Comb.},
   volume={7},
   date={2024},
   number={2},
   pages={475--514},
   doi={10.5802/alco.341},
}

\bib{CHT}{article}{
   author={Chaplin, Thomas},
   author={Harrington, Heather A.},
   author={Tillmann, Ulrike},
   title={A notion of homotopy for directed graphs and their flag complexes},
   journal={arXiv e-print},
  date={2024},
   doi={10.48550/arXiv.2411.04572},
}

\bib{CGHY}{article}{
   author={Chowdhury, Samir},
   author={Gebhart, Thomas},
   author={Huntsman, Steve},
   author={Yutin, Matvey},
   title={Path homologies of deep feedforward networks},
   journal={2019 18th IEEE International Conference On Machine Learning And Applications (ICMLA)},
  date={2019},
    pages={1077--1082},
   doi={10.48550/arXiv.2411.04572},
}

\bib{Ci}{article}{
   author={Cibils, Claude},
   title={Cohomology of incidence algebras and simplicial complexes},
   journal={J. Pure Appl. Algebra},
   volume={56},
   date={1989},
   number={3},
   pages={221--232},
   issn={0022-4049},
   doi={10.1016/0022-4049(89)90058-3},
}

\bib{DM}{article}{
   author={Dimakis, Aristophanes},
   author={M\"uller-Hoissen, Folkert},
   title={Discrete differential calculus: graphs, topologies, and gauge
   theory},
   journal={J. Math. Phys.},
   volume={35},
   date={1994},
   number={12},
   pages={6703--6735},
   issn={0022-2488},
   doi={10.1063/1.530638},
}

\bib{DSB}{article}{
   author={D{\"o}rfler, Florian},
   author={Simpson-Porco, John W },
   author={Bullo, Francesco},
   title={Electrical networks and algebraic graph theory: Models, properties, and applications},
   journal={2019 18th IEEE International Conference On Machine Learning And Applications (ICMLA)},
    journal={Proceedings of the IEEE},
  volume={106},
  number={5},
  pages={977--1005},
  year={2018},
  publisher={IEEE},
}

\bib{FH}{article}{
   author={Freyd, Peter},
   author={Heller, Alex},
   title={Splitting homotopy idempotents. II},
   journal={J. Pure Appl. Algebra},
   volume={89},
   date={1993},
   number={1-2},
   pages={93--106},
   issn={0022-4049},
  doi={10.1016/0022-4049(93)90088-B},
}

\bib{G63}{article}{
   author={Gerstenhaber, Murray},
   title={The cohomology structure of an associative ring},
   journal={Ann. of Math. (2)},
   volume={78},
   date={1963},
   pages={267--288},
   issn={0003-486X},
   doi={10.2307/1970343},
}

\bib{GS}{article}{
   author={Gerstenhaber, Murray},
   author={Schack, Samuel D.},
   title={Simplicial cohomology is Hochschild cohomology},
   journal={J. Pure Appl. Algebra},
   volume={30},
   date={1983},
   number={2},
   pages={143--156},
   issn={0022-4049},
   doi={10.1016/0022-4049(83)90051-8},
}

\bib{G76}{article}{
   author={Gianella, Gian Mario},
   title={Su una omotopia regolare dei grafi},
   journal={Rend. Sem. Mat. Univ. Politec. Torino},
   volume={35},
   date={1976/77},
   pages={349--360 (1978)},
   issn={0373-1243},
}

\bib{GJMY18}{article}{
   author={Grigor'yan, Alexander},
   author={Jimenez, Rolando},
   author={Muranov, Yuri},
   author={Yau, Shing-Tung},
   title={On the path homology theory of digraphs and Eilenberg-Steenrod
   axioms},
   journal={Homology Homotopy Appl.},
   volume={20},
   date={2018},
   number={2},
   pages={179--205},
   issn={1532-0073},
  doi={10.4310/HHA.2018.v20.n2.a9},
}

\bib{GLMY12}{article}{
   author={Grigor'yan, Alexander},
   author={Lin, Yong},
   author={Muranov, Yuri},
   author={Yau, Shing-Tung},
   title={Homologies of path complexes and digraphs},
   journal={arXiv e-prints},
   date={2012},
   pages={1207.2834},
}

\bib{GLMY14}{article}{
   author={Grigor'yan, Alexander},
   author={Lin, Yong},
   author={Muranov, Yuri},
   author={Yau, Shing-Tung},
   title={Homotopy theory for digraphs},
   journal={Pure Appl. Math. Q.},
   volume={10},
   date={2014},
   number={4},
   pages={619--674},
   issn={1558-8599},
  doi={10.4310/PAMQ.2014.v10.n4.a2},
}

\bib{GLMY15}{article}{
   author={Grigor'yan, Alexander},
   author={Lin, Yong},
   author={Muranov, Yuri},
   author={Yau, Shing-Tung},
   title={Cohomology of digraphs and (undirected) graphs},
   journal={Asian J. Math.},
   volume={19},
   date={2015},
   number={5},
   pages={887--931},
   issn={1093-6106},
  doi={10.4310/AJM.2015.v19.n5.a5},
}

\bib{GMY16}{article}{
   author={Grigor'yan, Alexander},
   author={Muranov, Yuri},
   author={Yau, Shing-Tung},
   title={On a cohomology of digraphs and Hochschild cohomology},
   journal={J. Homotopy Relat. Struct.},
   volume={11},
   date={2016},
   number={2},
   pages={209--230},
   issn={2193-8407},
  doi={10.1007/s40062-015-0103-1},
}

\bib{GMY17}{article}{
   author={Grigor'yan, Alexander},
   author={Muranov, Yuri},
   author={Yau, Shing-Tung},
   title={Homologies of digraphs and K\"{u}nneth formulas},
   journal={Comm. Anal. Geom.},
   volume={25},
   date={2017},
   number={5},
   pages={969--1018},
   issn={1019-8385},
  doi={10.4310/CAG.2017.v25.n5.a4},
}

\bib{Ha}{book}{
   author={Hatcher, Allen},
   title={Algebraic topology},
 PUBLISHER = {Cambridge University Press, Cambridge},
      YEAR = {2002},
     PAGES = {xii+544},
      ISBN = {0-521-79160-X; 0-521-79540-0},
   MRCLASS = {55-01 (55-00)},
MRREVIEWER = {Donald\ W.\ Kahn},
}

\bib{He}{article}{
   author={Heller, Alex},
   title={On the representability of homotopy functors},
   journal={J. London Math. Soc. (2)},
   volume={23},
   date={1981},
   number={3},
   pages={551--562},
   issn={0024-6107},
  doi={10.1112/jlms/s2-23.3.551},
}

\bib{Ho}{article}{
   author={Hochschild, G.},
   title={On the cohomology groups of an associative algebra},
   journal={Ann. of Math. (2)},
   volume={46},
   date={1945},
   pages={58--67},
   issn={0003-486X},
   review={\MR{0011076}},
   doi={10.2307/1969145},
}

\bib{HR}{article}{
   author={Hepworth, Richard},
   author={Roff, Emily},
   title={The reachability homology of a directed graph},
   journal={Int. Math. Res. Not. IMRN},
   date={2025},
   number={3},
   pages={Paper No. rnae280, 18},
   issn={1073-7928},
   doi={10.1093/imrn/rnae280},
}

\bib{Ja}{article}{
   author={Jardine, J. F.},
   title={Representability theorems for presheaves of spectra},
   journal={J. Pure Appl. Algebra},
   volume={215},
   date={2011},
   number={1},
   pages={77--88},
   issn={0022-4049},
  doi={10.1016/j.jpaa.2010.04.001},
}


\bib{Lu}{article}{
   author={Lurie, Jacob},
   title={Higher algebra},
   journal={available on author's webpage. September 18, 2017 version},
   date={2017},
}

\bib{LS23}{article}{
   author={Liao, Hsuan-Yi}, 
   author={Seol, Seokbong},
   title={Keller admissible triples and {D}uflo theorem},
   journal={J. Math. Pures Appl. (9)},
   volume={174},
   date={2023},
   pages={1--43},
   issn={0021-7824},
}

\bib{LSX18}{article}{
   author={Liao, Hsuan-Yi}, 
   author={Sti\'{e}non, Mathieu},
   author={Xu, Ping},
   title={Formality theorem for differential graded manifolds},
   journal={C. R. Math. Acad. Sci. Paris},
   volume={356},
   date={2018},
   pages={27--43},
   issn={1631-073X},
}

\bib{M83}{article}{
   author={Malle, G.},
   title={A homotopy theory for graphs},
   journal={Glas. Mat. Ser. III},
   volume={18(38)},
   date={1983},
   number={1},
   pages={3--25},
   issn={0017-095X},
}

\bib{Ne}{article}{
   author={Neeman, Amnon},
   title={The Grothendieck duality theorem via Bousfield's techniques and
   Brown representability},
   journal={J. Amer. Math. Soc.},
   volume={9},
   date={1996},
   number={1},
   pages={205--236},
   issn={0894-0347},
   doi={10.1090/S0894-0347-96-00174-9},
}

\end{biblist}
\end{bibdiv}

\end{document}